\numberwithin{equation}{section}
\newtheorem{Thm}{Theorem}[section]
\newtheorem{Lem}[Thm]{Lemma}
\newtheorem{Cor}[Thm]{Corollary}
\theoremstyle{definition}
\newtheorem{Rem}[Thm]{Remark}
\newtheorem{Def}[Thm]{Definition}
\newtheorem{Expl}[Thm]{Example}
\newcommand{\R}{\mathbb{R}}
\newcommand{\Z}{\mathbb{Z}}
\newcommand{\E}[1]{{\rm E}(#1)}
\newcommand{\e}{{\rm e}}
\newcommand{\cA}{\mathscr{A}}
\newcommand{\eps}{\varepsilon}
\renewcommand{\rho}{\varrho}
\newcommand{\sub}{\subset}
\title{
Injective Hulls of Infinite Totally Split-Decomposable Metric Spaces 
}
\author{Ma\"el Pav\' on}
\address{Department of Mathematics, ETH Z\"urich, 8092 Z\"urich, Switzerland}
\email{mael.pavon@math.ethz.ch}
\date{\today}
\begin{document}

\maketitle
\begin{abstract}
We consider the class of (possibly) infinite metric spaces with integer-valued totally split-decomposable metric and possessing an injective hull which has the structure of a  polyhedral complex. For this class, we give a characterization for the injective hull to be combinatorially equivalent to a CAT(0) cube complex. In order to obtain these results, we extend the decomposition theory introduced by Bandelt and Dress in 1992 as well as results on the tight span of totally split-decomposable metric spaces proved by Huber, Koolen and Moulton in 2006.

As an application, and using results of Lang of 2013, we obtain proper actions on CAT(0) cube complexes for finitely generated groups endowed with a totally split-decomposable word metric whose associated splits satisfy an easy combinatorial property. In the case of Gromov hyperbolic groups, the action is proper as well as cocompact. 
\end{abstract}

\section{Introduction}\label{sec:Introduction}

In 1992, Bandelt and Dress (cf. \cite{BanD}) introduced a decomposition theory for finite metric spaces which is canonical, namely it is the only one which is, in a sense, compatible with Isbell's injective hull.

Our first goal is to extend the canonical decomposition theory to the class of (possibly) infinite metric spaces with integer-valued totally split-decomposable metric and possessing an injective hull which has the structure of a  polyhedral complex. For this class, we then provide necessary and sufficient conditions for the injective hull to be combinatorially equivalent to a CAT(0) cube complex. 

The basic definitions of the canonical decomposition theory of Bandelt and Dress do not need to be modified to suit our more general situation. A \textit{split} (also called \textit{cut}) $S = \{A,B\}$ of a set $X$ is a pair of non-empty subsets of $X$ such that $A \cap B = \emptyset$ and $X = A \cup B$, or in other words $X = A \sqcup B$. For $x \in X$, we denote by $S(x)$ the element of $S$ that contains $x$. The \textit{split metric}  associated to $S$ is then a pseudometric $\delta_S$ on $X$ such that
\[
\delta_S(x,y) \; = \; \left\lbrace{\begin{matrix} 
1 &\text{if }  S(x) \neq S(y) , \\
0 &\text{if }  S(x) = S(y). \end{matrix}}\right.
\]
For a pseudometric $d$ on $X$, we call $S=\{A,B\}$ a \textit{$d$-split} (of $X$) if the \textit{isolation index} given by \eqref{eq:IsoI} satisfies 
\[
\alpha^d_S >0.
\]
The pseudometric $d$ is said to be \textit{totally split-decomposable} if $d = \sum_{S \in \mathcal{S}} \alpha^d_S \delta_S$ where $\mathcal{S}$ is the set of all $d$-splits. 

A split subsystem $\bar{\mathcal{S}} \subset \mathcal{S}$ is called \textit{octahedral} if and only if there is a partition of $X$ into a disjoint union of six non-empty sets $X = Y^1_1 \sqcup Y^{-1}_1 \sqcup Y^1_2 \sqcup Y^{-1}_2 \sqcup Y^1_3 \sqcup Y^{-1}_3$ such that $\bar{\mathcal{S}}$ consists of the four splits 
\begin{align*}
S_1 &:= \{ Y^1_1 \sqcup Y^1_2 \sqcup  Y^1_3 , Y^{-1}_1 \sqcup Y^{-1}_2 \sqcup Y^{-1}_3 \}, \\
S_2 &:= \{ Y^1_1 \sqcup Y^1_2 \sqcup Y^{-1}_3 , Y^{-1}_1 \sqcup Y^{-1}_2 \sqcup Y^1_3 \}, \\
S_3 &:= \{ Y^1_1 \sqcup Y^{-1}_2 \sqcup Y^1_3 , Y^{-1}_1 \sqcup Y^1_2 \sqcup Y^{-1}_3 \}, \\
S_4 &:= \{ Y^1_1 \sqcup Y^{-1}_2 \sqcup Y^{-1}_3 , Y^{-1}_1 \sqcup Y^1_2 \sqcup Y^1_3 \}.
\end{align*}
$\mathcal{S}$ is called \textit{octahedral-free} if it does not contain any octahedral split subsystem. Two splits $S:=\{A,B\}$ and $S':=\{A',B'\}$ are said to be \textit{compatible} if $A' \subset A$ (and $B \subset B'$) or alternatively $A \subset A'$ (and $B' \subset B$).

For general facts regarding injective hulls, we shall refer to \cite{Lan}. Injective hulls can be characterized in several different ways. In the sequel, \textit{the} injective hull refers to Isbell's injective hull construction $(X,d) \mapsto \mathrm{E}(X,d)$. Recall at this point that the injective hull $\mathrm{E}(X,d)$ of a pseudometric space $(X,d)$ is given by
\begin{equation}\label{eq:DefIH}
\mathrm{E}(X,d) = \bigl\{ f \in \R^X : 
\text{$f(x) = \textstyle\sup_{y \in X}(d(x,y) - f(y))$ for all $x \in X$} 
\bigr\}.
\end{equation}

The difference between two elements of $\mathrm{E}(X,d)$ has finite $\left\| \cdot \right\|_{\infty}$-norm and $\mathrm{E}(X,d)$ is endowed with the metric
\[
d_{\infty}(f,g) := \left\| f-g \right\|_{\infty}.
\]
It is easy to see that for $f \in \mathrm{E}(X,d)$, if $d(x,x')=0$ then $f(x) = f(x')$. Hence, if $(X,d)$ is a pseudometric space and $(Y,d')$ is the associated metric space obtained by collapsing every maximal set of diameter zero to a single point, then $\mathrm{E}(X,d)$ and $\mathrm{E}(Y,d')$ are isometric. Accordingly, the statements involving the injective hull will be stated for metric spaces instead of pseudometric spaces. As it is shown in \cite[Theorem~4.5]{Lan} and as we shall recall later in this Introduction, as soon as $(X,d)$ is a metric space with integer-valued metric verifying \textit{the local rank condition (LRC)}, which is discussed below, there is a canonical locally finite dimensional polyhedral structure on $\mathrm{E}(X,d)$. 

In the case where $d$ is totally split-decomposable, our goal is to provide necessary and sufficient conditions ensuring that $\mathrm{E}(X,d)$ is combinatorially equivalent to a $\mathrm{CAT}(0)$ cube complex. Accordingly, we have

\begin{Thm}\label{Thm:t11}
Let $(X,d)$ be a metric space with integer-valued totally split-decomposable metric satisfying the local rank condition. Let $\mathcal{S}$ be the set of all $d$-splits. Then, the following are equivalent:
\begin{enumerate}[(i)]
\item \label{it:it1} $\mathcal{S}$ does not contain any octahedral split subsystem $\bar{\mathcal{S}}$ satisfying that for every $S=\{A,B\} \in \mathcal{S} \setminus \bar{\mathcal{S}}$, there is $S':= \{A',B'\} \in \bar{\mathcal{S}}$ such that $S$ and $S'$ are compatible.
\item \label{it:it2} Each cell of $\mathrm{E}(X,d)$ is a parallelotope.
\end{enumerate}
If (\ref{it:it1}) or (\ref{it:it2}) holds, there is a CAT(0) cube complex $\mathrm{K}(X,d)$ and a canonical bijective cell complex isomorphism $\sigma \colon \mathrm{E}(X,d) \to \mathrm{K}(X,d)$ mapping cells affinely to cells.
\end{Thm}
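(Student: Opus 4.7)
The plan is to reduce everything to a cell-by-cell analysis of $\mathrm{E}(X,d)$ using the extended Bandelt--Dress decomposition theory developed earlier in the paper. Under the LRC hypothesis, Lang's polyhedral structure theorem equips $\mathrm{E}(X,d)$ with a canonical locally finite-dimensional cell structure, and to each cell $C$ one associates a finite family $\mathcal{S}_C \subset \mathcal{S}$ of ``locally active'' splits whose pairwise-incompatibility graph determines $C$ combinatorially, in the spirit of the finite case treated by Huber--Koolen--Moulton. The key task is to translate the global combinatorial condition in (\ref{it:it1}) into the local geometric condition in (\ref{it:it2}) through this dictionary.

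For (\ref{it:it2}) $\Rightarrow$ (\ref{it:it1}) I would argue contrapositively. Given an octahedral subsystem $\bar{\mathcal{S}} = \{S_1,\ldots,S_4\} \subset \mathcal{S}$ satisfying the compatibility property of (\ref{it:it1}), I would exhibit an explicit point $f \in \mathrm{E}(X,d)$ whose minimal cell $C$ has $\mathcal{S}_C = \bar{\mathcal{S}}$. The compatibility hypothesis ensures that no other split subdivides $C$ further, so the combinatorial type of $C$ is determined purely by $\bar{\mathcal{S}}$; a direct computation modeled on Bandelt and Dress's analysis of octahedral systems in the finite case then identifies $C$ with a three-dimensional octahedron, which is not a parallelotope.

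For (\ref{it:it1}) $\Rightarrow$ (\ref{it:it2}), take any cell $C$ with locally active splits $\mathcal{S}_C$, which are pairwise incompatible by construction. A Bandelt--Dress-type structure result, adapted through the extended decomposition theory, should give that $C$ is a parallelotope precisely when $\mathcal{S}_C$ is octahedral-free. If an octahedral $\bar{\mathcal{S}} \subset \mathcal{S}_C$ existed, I would verify that every $S \in \mathcal{S} \setminus \bar{\mathcal{S}}$ is compatible with some element of $\bar{\mathcal{S}}$: an incompatible $S$ would either already lie in $\mathcal{S}_C$ and further subdivide $C$, contradicting the choice of $\mathcal{S}_C$, or else force an incompatible extension of $\bar{\mathcal{S}}$ on a neighboring cell, contradicting local finite-dimensionality. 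This would produce the forbidden global configuration of (\ref{it:it1}).

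Assuming (\ref{it:it2}), each cell $C$ is a parallelotope whose edge lengths are dictated by the isolation indices of the splits in $\mathcal{S}_C$; define $\sigma$ on $C$ as the affine rescaling that sends each edge direction to unit length. Because the rescaling factor is attached to a single split and each edge of $\mathrm{E}(X,d)$ is labeled by a unique split, this assignment is consistent along shared faces, so $\mathrm{K}(X,d)$ inherits a cube complex structure and $\sigma$ is a bijective cellular isomorphism mapping cells affinely to cells. For the CAT(0) property I would combine contractibility of $\mathrm{E}(X,d)$ (transported through $\sigma$ to $\mathrm{K}(X,d)$) with Gromov's link criterion, reducing flagness at each vertex of $\mathrm{K}(X,d)$ to the absence of octahedral subsystems with the compatibility property, which is exactly (\ref{it:it1}). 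The main obstacle I expect is the global-to-local step in (\ref{it:it1}) $\Rightarrow$ (\ref{it:it2}): controlling the interaction of infinitely many splits outside $\mathcal{S}_C$ with a hypothetical octahedral subsystem inside $\mathcal{S}_C$, so as to promote a local obstruction on a single cell to a genuine global forbidden configuration. This should be where LRC and the integer-valuedness of $d$ play an essential role, and where the extended decomposition theory must substantively improve on the classical finite Bandelt--Dress framework.
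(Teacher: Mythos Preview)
Your outline has the right high-level shape but contains a concrete geometric error and omits the machinery that actually drives the proof.

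The error: the non-parallelotope cell produced by an octahedral subsystem is a \emph{rhombic dodecahedron}, not an octahedron. The name ``octahedral split system'' refers to the fact that the six blocks $Y_i^{\pm 1}$ sit at the vertices of an octahedron (equivalently, the underlying graph of the associated $6$-point antipodal space is $K_{3\times 2}$), but the tight span of that $6$-point space is the rhombic dodecahedron. This is not cosmetic: the whole equivalence rests on a case analysis of tight spans of small antipodal spaces, and misidentifying the obstruction polytope means the contrapositive computation you sketch cannot be carried out.

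The missing machinery: the paper does not work directly with a ``Bandelt--Dress-type structure result'' on $\mathcal{S}_C$. Instead it shows that every cell $[f]$ is $X$-gated (Lemma~\ref{Lem:l18}) and that $[f]$ is polytope-isomorphic to $\mathrm{E}(G([f]),d_\infty)$, where the gate set $G([f])$ is an antipodal metric space with $|G([f])|\le 2\dim[f]$ (Theorem~\ref{Thm:AntipodalGateSetsCells}). The Huber--Koolen--Moulton classification then gives that $[f]$ is a combinatorial hypercube unless $|G([f])|=6$ and the underlying graph is $K_{3\times 2}$, in which case it is a rhombic dodecahedron. Identifying this last case with the forbidden configuration in (\ref{it:it1}) uses the structure of the admissible graph $(X,A(f))$: a rhombic-dodecahedral cell forces $(X,A(f))$ to consist of three complete bipartite components $Y_i^1\sqcup Y_i^{-1}$, and it is this bipartite completeness that shows every $S\in\mathcal{S}\setminus\bar{\mathcal{S}}$ is compatible with some $S'\in\bar{\mathcal{S}}$. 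Your proposed mechanism for that step (``an incompatible $S$ would already lie in $\mathcal{S}_C$ or force an incompatible extension on a neighboring cell'') does not go through. Finally, the CAT(0) link condition is not reduced to (\ref{it:it1}); it is established separately (Theorem~\ref{Thm:t12}) for any $\mathrm{E}(X,d)$ with hypercube cells, via the Buneman complex (Lemma~\ref{Lem:l31}) and weak compatibility (Lemma~\ref{Lem:l22}).
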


By a \textit{parallelotope} we mean a Minkowski sum of a finite collection of linearly independent closed segments (see for instance \cite{Gru}).
When condition \eqref{it:it1} in Theorem~\ref{Thm:t11} holds, we say that the family of all $d$-splits of $X$ has \textit{no compatibly octahedral decomposition}. 
If the diameters of the cells of $\mathrm{E}(X,d)$ are uniformly bounded, $\sigma$ in Theorem~\ref{Thm:t11} can be chosen to be bi-Lipschitz.

For a metric space $(X,d)$, let $I(x, y) := \{z \in X : d(x, z) + d(z, y) = d(x, y)\}$. $(X,d)$ is called \textit{discretely geodesic} if the metric is integer-valued and for every pair of points $x,y \in X$ there exists an isometric embedding $\gamma \colon \{0, 1, \dots, d(x, y) \} \to  X$ such that $\gamma(0)=x$ and $\gamma(d(x,y))=y$. Moreover, we say that a discretely geodesic metric space $X$ has \textit{$\beta$-stable intervals}, for some constant $\beta \ge 0$, if for every triple of points $x,y,y' \in X$ such that $d(y,y')=1$ we have
\[
d_H(I(x,y),I(x,y')) \le \beta
\]
where $d_H$ denotes the Hausdorff distance in $X$.

The injective hull has among other features, applications to geometric group theory. Let $\Gamma$ be a finitely generated group, $G$ a finite generating set, and let $\Gamma$ be equipped with the word metric $d_G$ with respect to the alphabet $G \cup G^{-1}$. It is shown in \cite[Theorem~1.4]{Lan} that if $(\Gamma,d_G)$ has $\beta$-stable intervals, then $(\Gamma,d_G)$ satisfies the (LRC) and thus $\mathrm{E}(\Gamma,d_G)$ has a polyhedral complex structure. The isometric action of $\Gamma$ on $(\Gamma,d_G)$ given by $(x,y) \mapsto L_x(y) := xy$ induces consequently a proper action by cell isometries of $\Gamma$ on $\mathrm{E}(\Gamma,d_G)$ given by
\[
(x,f) \mapsto \bar{L}_x (f) = f \circ L_x^{-1}.
\]
Moreover, if $(\Gamma,d_G)$ is $\delta$-hyperbolic (in particular it has $\beta$-stable intervals), then $\mathrm{E}(\Gamma,d_G)$ is a finite dimensional polyhedral complex with a uniform bound on the diameter of its cells and the action is cocompact. As an immediate consequence of these observations, we thus obtain:

\begin{Thm}\label{Thm:ttt2}
Let $\Gamma$ be a finitely generated group and $(\Gamma,d_G)$ the associated metric space with respect to the alphabet $G \cup G^{-1}$. Assume that $d_G$ is totally split-decomposable and (\ref{it:it1}) in Theorem~\ref{Thm:t11} holds. Then, the following hold:
\begin{enumerate}[(i)]
\item if $(\Gamma,d_G)$ has $\beta$-stable intervals, there is a proper action of $\Gamma$ on $\mathrm{K}(\Gamma,d_G)$ given by
\[
(x,y) \mapsto (\sigma \circ \bar{L}_x \circ \sigma^{-1})(y).
\]
\item If $(\Gamma,d_G)$ is $\delta$-hyperbolic, the action of $\Gamma$ on $\mathrm{K}(\Gamma,d_G)$ is proper as well as cocompact.
\end{enumerate}
\end{Thm}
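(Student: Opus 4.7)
The plan is to derive both parts of Theorem~\ref{Thm:ttt2} from Theorem~\ref{Thm:t11} combined with \cite[Theorem~1.4]{Lan} in a largely formal way. Under the hypothesis of part (i), \cite[Theorem~1.4]{Lan} implies that $\mathrm{E}(\Gamma,d_G)$ carries a locally finite dimensional polyhedral structure and that the left-regular action $\bar{L}$ of $\Gamma$ is proper and by $d_\infty$-isometries permuting cells. Theorem~\ref{Thm:t11} then furnishes the CAT(0) cube complex $\mathrm{K}(\Gamma,d_G)$ together with a bijective cell complex isomorphism $\sigma \colon \mathrm{E}(\Gamma,d_G) \to \mathrm{K}(\Gamma,d_G)$ mapping cells affinely to cells. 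Conjugation by $\sigma$ transports $\bar{L}$ to an action on $\mathrm{K}(\Gamma,d_G)$ by cell complex automorphisms, and since every cell complex automorphism of a CAT(0) cube complex permutes standard Euclidean cubes combinatorially, each transported map is automatically an isometry of the CAT(0) metric on $\mathrm{K}(\Gamma,d_G)$.

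For the properness statement in (i), the key observation is that any compact subset $D \subset \mathrm{K}(\Gamma,d_G)$ meets only finitely many cells and, because $\sigma$ is an affine cell complex isomorphism, $\sigma^{-1}(D)$ is a closed subset of the corresponding finite union of (compact) parallelotope cells in $\mathrm{E}(\Gamma,d_G)$, hence compact. One then observes the equality
\[
\{ g \in \Gamma : (\sigma \circ \bar{L}_g \circ \sigma^{-1})(D) \cap D \neq \emptyset \} = \{ g \in \Gamma : \bar{L}_g(\sigma^{-1}(D)) \cap \sigma^{-1}(D) \neq \emptyset \},
\]
whose right-hand side is finite by the properness of $\bar{L}$.

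For part (ii), \cite{Lan} additionally gives in the $\delta$-hyperbolic case that $\mathrm{E}(\Gamma,d_G)$ is finite dimensional with uniformly bounded cell diameters and that $\bar{L}$ is cocompact. By the remark immediately following Theorem~\ref{Thm:t11}, $\sigma$ may then be chosen bi-Lipschitz and in particular a homeomorphism, so the image under $\sigma$ of a compact fundamental domain for $\bar{L}$ is a compact fundamental domain for the transported action on $\mathrm{K}(\Gamma,d_G)$.

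The main obstacle I expect is the transfer of properness through $\sigma$ in part (i): because $\sigma$ is only a cell complex isomorphism and need not be globally continuous when cell diameters are unbounded, one cannot simply push compact sets across and must rely on the cellular bookkeeping above. A related point that deserves care is ensuring that the cube complex $\mathrm{K}(\Gamma,d_G)$ is itself locally finite, so that compact subsets indeed meet only finitely many cells; this should follow via $\sigma$ from the local finite dimensionality of $\mathrm{E}(\Gamma,d_G)$ provided by the (LRC).
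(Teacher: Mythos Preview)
Your proposal is correct and follows the same line as the paper, which presents Theorem~\ref{Thm:ttt2} simply as an ``immediate consequence'' of Theorem~\ref{Thm:t11} together with \cite[Theorem~1.4]{Lan} and provides no separate proof. Your unpacking of that immediate consequence is accurate.

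One remark: the obstacle you flag in part (i)---that $\sigma$ might fail to be globally continuous---is in fact not present. In the proof of Theorem~\ref{Thm:t11} the paper observes that under the (LRC) the map $\sigma$ is locally bi-Lipschitz (``$\mathrm{K}(X,d)$ is locally bi-Lipschitz equivalent to $\mathrm{E}(X,d)$''); being also a bijection, $\sigma$ is therefore a homeomorphism even without the uniform bound on cell diameters. This makes the properness transfer direct: $\sigma^{-1}(D)$ is compact simply because $\sigma^{-1}$ is continuous, and your cellular bookkeeping and the local-finiteness concern become unnecessary.
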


We give an outline of the structure of Isbell's injective hull and describe when it corresponds to that of a polyhedral complex, following \cite{Lan}. Given a pseudometric space $(X,d)$, let us consider the vector space $\R^X$ of real-valued functions on $X$ and
\[
\Delta(X,d) := \{ f \in \R^X : 
\text{$f(x) + f(y) \ge d(x,y)$ for all $x,y \in X$}\}.
\]
We call $f \in \Delta(X,d)$ \textit{extremal} if there is no $g \le f$ in $\Delta(X,d)$ distinct from $f$. The set $\mathrm{E}(X,d)$ of extremal functions is equivalently given by \eqref{eq:DefIH}.
To be able to describe the structure of $\mathrm{E}(X,d)$ further, one can assign to every $f \in \mathrm{E}(X,d)$ the undirected graph with vertex set $X$ and edge set
\begin{equation}\label{eq:e000000}
A(f) := \bigl\{ \{x,y\} : x,y \in X \text{ and } f(x) + f(y) = d(x,y) \bigr\}.
\end{equation}
Furthermore, we let
\[
\mathrm{E}'(X,d) := \bigl\{ f \in \Delta(X,d) : \textstyle\bigcup A(f) = X \bigr\}.
\]
Note that if $f \in \mathrm{E}'(X,d)$, the graph $(X,A(f))$ has no isolated vertices
(although it may be disconnected). A set $A$ of unordered pairs of (possibly equal) points in $X$ is called {\em admissible} if there exists an $f \in \mathrm{E}'(X,d)$ with $A(f) = A$, and we denote by $\cA(X)$ the collection of admissible sets. 

To every $A \in \cA(X)$, we associate the affine subspace $H(A)$ of $\R^X$ given by 
\begin{align*}
H(A) 
&:= \{ g \in \R^X : A \subset A(g) \} \\
&= \{ g \in \R^X : g(x) + g(y) = d(x, y) \text{ for all }  {x, y} \in A \}.
\end{align*}
We define the \textit{rank} of $A$ to be the dimension of $H(A)$,
\[
\mathrm{rank}(A) := \mathrm{dim}(H(A)) \in \mathbb{N} \cup \{0,\infty\}.
\]
We can compute $\mathrm{rank}(A)$ as follows: if $f,g$ are two elements of $H(A)$ and $\{x,y\} \in A$, one has $f(x) + f(y) = d(x,y) = g(x) + g(y)$, hence $f(y) - g(y) = -(f(x) - g(x))$, which means that the difference 
$f-g$ has alternating sign along all edge paths in the graph $(X,A)$. 
Therefore, there is either none or exactly one degree of freedom 
for the values of $f \in H(A)$ on every connected component of $(X,A)$, 
depending upon whether or not the component contains an odd cycle. 
We call such components (viewed as subsets of $X$) {\em odd} or 
{\em even $A$-components}, respectively. 

If $(X,d)$ is a finite metric space, $\mathrm{E}(X,d)$ is a finite polyhedral complex. If $(X,d)$ is infinite, we say that $(X,d)$ satisfies the \textit{local rank condition (LRC)} if and only if for every $f \in \mathrm{E}(X,d)$, there exist $\eps,N > 0$ such that for all $g \in \mathrm{E}'(X,d)$ with $ \|f - g\|_\infty < \epsilon $, one has $\mathrm{rank}(A(g))\le N$. Recall (cf. \cite[Theorem~4.4]{Lan}) that if $(X,d)$ is a metric space with integer-valued metric and satisfying the (LRC), then $\mathrm{E}(X,d)=\mathrm{E}'(X,d)$. In this case, let
\[
P(A) = \mathrm{E}(X,d) \cap H(A) = \Delta(X,d) \cap H(A).
\]
The family $\{P(A)\}_{A \in \cA(X)}$ then defines a polyhedral structure on $\mathrm{E}(X,d)$ and note that $P(A')$ is a face of $P(A)$ if and only if $A \sub A'$.

In order to prove Theorem~\ref{Thm:t11}, we need to be able to decompose any pseudometric $d$ on a set $X$ in a way that is coherent with the structure of $\mathrm{E}(X,d)$ which we have just introduced. The \textit{isolation index} of a pair $S:=\{A,B\}$ of non-empty subsets  with respect to a pseudometric $d$ on $X$ is the non-negative number $\alpha^d_S$ (equivalently $\alpha^d_{\{A,B\}}$ or simply $\alpha_{S}$) given by
\begin{align}\label{eq:IsoI}
\alpha^d_{S} := \frac{1}{2}  \inf_{ \substack{ a,a' \in A \\ b,b' \in B}}  \Bigl[  \max \bigl \{d(a,b) + d(a',b'), d(a',b)+ d(a,b'),d(a,a') + d(b,b') \bigr \}& \nonumber \\ 
-d(a,a') - d(b,b') &\Bigr].
\end{align}

Moreover, we call a pseudometric $d_0$ on $X$ \textit{split-prime} if $\alpha_{S}^{d_0}=0$ for any split $S$ of $X$. Note that by Lemma~\ref{Lem:FinitelyManySeparatingSplits}, there are for any integer-valued pseudometric only finitely many $d$-splits separating any pair of points. 

\begin{Thm}\label{Thm:thm2} 
Let $(X,d)$ be a pseudometric space with integer-valued pseudometric, let $\mathcal{S}_X$ be the set of all splits of $X$ and let $\mathcal{S}$ be the set all $d$-splits. Let
\[
\left\lbrace{\begin{matrix} 
\lambda_S \in (-\infty,\alpha^{d}_S] &\text{if }  S \in \mathcal{S} , \\ \\
\lambda_S = 0 &\text{if } S \in \mathcal{S}_X \setminus \mathcal{S}.
\end{matrix}}\right.
\]
Then,
\[
\widetilde{d} := d - \sum_{S \in \mathcal{S} } \lambda_{S} \delta_S
\]
is a pseudometric such that for every split $S \in \mathcal{S}_X$, one has
\[
\alpha_S^{\widetilde{d}} = \alpha_S^d - \lambda_S.
\]
In particular, there is a split-sprime pseudometric $d_0$ such that 
\[
d = d_0 +  \sum_{S \in \mathcal{S}_X } \alpha^{d}_{S} \delta_S.
\]
\end{Thm}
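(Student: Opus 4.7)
The plan is to reduce the assertion to a single-split lemma and extend via an iteration that exploits the finiteness of $d$-splits meeting any given pair of points. First, I would verify that $\widetilde d$ is well-defined pointwise: by the lemma referenced immediately before the statement, for each $x, y \in X$ only finitely many $S \in \mathcal{S}$ satisfy $\delta_S(x, y) = 1$, so the sum defining $\widetilde d(x, y)$ reduces to a finite sum in $\R$.

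The crux is the following single-split lemma: for a pseudometric $d$, a $d$-split $S$ and $\lambda \in (-\infty, \alpha_S^d]$, the function $d' := d - \lambda \delta_S$ is a pseudometric with $\alpha_S^{d'} = \alpha_S^d - \lambda$ and $\alpha_{S'}^{d'} = \alpha_{S'}^d$ for every other split $S'$. Pseudometric-ness reduces, by case analysis on how a triple $x, y, z$ distributes relative to $S$, to the bound $2\lambda \le d(x, y) + d(y, z) - d(x, z)$ when $x, z$ lie on one side of $S$ and $y$ on the other; this follows from $\lambda \le \alpha_S^d$ applied to the 4-tuple $(x, z, y, y)$, and non-negativity is similar via the 4-tuple $(x, x, y, y)$. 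For the isolation index with $S' = S$, each admissible 4-tuple has pattern $(\sigma_1, \sigma_2, \sigma_3) = (2, 2, 0)$ with respect to $S$; using $\alpha_S^d > 0$ one has $\max(M_1, M_2) - M_3 \ge 2\alpha_S^d \ge 2\lambda$ uniformly, giving $\widetilde M - \widetilde M_3 = \max(M_1, M_2) - M_3 - 2\lambda$ and thus $\alpha_S^{d'} = \alpha_S^d - \lambda$. For $S' \neq S$, I would analyze an admissible 4-tuple $T$ for $S'$ according to its pattern wrt $S$ (there are exactly five up to symmetry): the patterns $(0, 0, 0)$ and $(1, 1, 1)$ leave $\widetilde M - \widetilde M_3$ unchanged; the patterns $(2, 0, 2)$ and $(0, 2, 2)$ are handled by a cyclic relabelling that turns $T$ into an admissible 4-tuple for $\alpha_S^d$, which together with $\alpha_S^d > 0$ forces the maximum among $M_1, M_2, M_3$ to be attained by $M_1$ or $M_3$ (respectively $M_2$ or $M_3$) and yields $\widetilde M - \widetilde M_3 \ge 2\alpha_{S'}^d$; for the residual pattern $(2, 2, 0)$, $T$ is simultaneously admissible for $\alpha_S^d$ and $\alpha_{S'}^d$ and one needs the sharper bound $M - M_3 \ge 2(\alpha_S^d + \alpha_{S'}^d)$, which I would derive by restricting $d$ to a sufficiently large finite $Y \subset X$ and invoking the Bandelt--Dress canonical decomposition for finite metric spaces.

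Given this lemma, the general case follows by reduction to finite support: for any triple $\{x, y, z\}$ or 4-tuple $T$, only finitely many splits $S$ have $\delta_S$ nonzero on some pair involved; calling this set $\mathcal{F}$, $\widetilde d$ coincides with $d - \sum_{S \in \mathcal{F}} \lambda_S \delta_S$ on these pairs, so iterating the single-split lemma across $\mathcal{F}$ yields the triangle inequality and the correct value of $\widetilde M - \widetilde M_3$ at $T$, whence both the global pseudometric property and the isolation-index formula. For the final assertion, setting $\lambda_S := \alpha_S^d$ for every $S \in \mathcal{S}$ produces $\alpha_S^{\widetilde d} = 0$ for every split, so $d_0 := \widetilde d$ is split-prime and $d = d_0 + \sum_{S \in \mathcal{S}_X} \alpha_S^d \delta_S$ after rearrangement.

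The main obstacle is the pattern $(2, 2, 0)$ case in the single-split argument: the required sharp bound $M - M_3 \ge 2(\alpha_S^d + \alpha_{S'}^d)$ at 4-tuples simultaneously admissible for both infima does not follow from the two isolation-index definitions individually and must be extracted from the Bandelt--Dress canonical decomposition applied to a finite restriction of $d$ that is large enough to contain representatives of the minimizing 4-tuples for both $\alpha_S^d$ and $\alpha_{S'}^d$.
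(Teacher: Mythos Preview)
Your plan is correct and tracks the paper's argument closely. The paper organizes the proof exactly as you do: a single-split lemma (Lemma~2.4) proved by the same triangle case analysis and the same $\beta$-pattern analysis, followed by a reduction to finitely many splits using Lemma~2.3. The one structural difference is how the sharp bound in the $(2,2,0)$ pattern is sourced: rather than restricting to a finite $Y$ and quoting Bandelt--Dress as a black box, the paper proves the inequality $\beta^d_T \ge \alpha^d_S + \alpha^d_{S'}$ directly as its Theorem~2.3, via the elementary four-point estimate of Lemma~2.1 iterated over a finite set of witnesses distinguishing the extending splits. Your route through a finite restriction and the original Bandelt--Dress theorem works equally well (you do need, as you note, to include minimizing $4$-tuples for both $\alpha^d_S$ and $\alpha^d_{S'}$, and also a point distinguishing $S\cap Y$ from $S'\cap Y$), but the paper's choice keeps the argument self-contained. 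One small point your sketch underplays: for the direction $\alpha^{d'}_{S'}\le\alpha^d_{S'}$ of the single-split lemma you need to exhibit a $4$-tuple where $\beta^{d'}=\beta^d$; the paper does this by taking the $d$-minimizer for $\alpha^d_{S'}$ and observing (again via Theorem~2.3) that it cannot sit in pattern $(2,2,0)$ relative to $S$, hence $\beta$ is unchanged there.
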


The decomposition given by Theorem \ref{Thm:thm2} can be characterized uniquely in a corollary to Theorem~\ref{Thm:thm4}:

\begin{Cor}\label{Cor:CorEI}
Let $(X,d)$ be a metric space with integer-valued metric satisfying the (LRC). Let $\mathcal{S}$ be the family of all $d$-splits of $X$ so that $d = d_0 + \sum_{S \in \mathcal{S}} \alpha_S \delta_S$ and let $\lambda_S \in [0,\alpha_S]$ for every $S \in \mathcal{S}$. Then, setting $d_1 := d - \sum_{S \in \mathcal{S}} \lambda_S \delta_S$, we have
\begin{equation}\label{eq:InjectiveHullDecomposition}
\mathrm{E}(X,d) \subset \R^X \cap \left( \mathrm{E}(X,d_1) + \sum_{S \in \mathcal{S}} \lambda_S \mathrm{E}(X,\delta_S) \right).
\end{equation}
Moreover, for any split $S=\{A,B\}$ of $X$ and any $\lambda_S>0$ such that $d = d_1 + \lambda_S \delta_{S}$, if $\mathrm{E}(X,d) \subset \mathrm{E}(X,d_1) + \lambda_S \mathrm{E}(X,\delta_{S})$, then the following hold:
\begin{enumerate}[$(i)$]
\item $S$ is a $d$-split of $X$ and
\item $\lambda_S \le \alpha_S$.
\end{enumerate}
\end{Cor}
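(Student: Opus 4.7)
My plan is to derive both statements from Theorem~\ref{Thm:thm4}, which characterizes when the injective hull of a totally split-decomposable pseudometric decomposes as a Minkowski sum of the injective hulls of its summands.

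For the inclusion \eqref{eq:InjectiveHullDecomposition}, I would first apply Theorem~\ref{Thm:thm2} to the data $\lambda_S \in [0,\alpha_S]$, which realizes $d = d_1 + \sum_{S \in \mathcal{S}} \lambda_S \delta_S$ as a decomposition into pseudometrics with $\alpha^{d_1}_S = \alpha_S - \lambda_S \ge 0$. This puts us exactly in the hypothesis of Theorem~\ref{Thm:thm4}, which immediately gives the Minkowski-type inclusion \eqref{eq:InjectiveHullDecomposition}.

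For the converse, the starting observation is that $\mathrm{E}(X,\delta_S)$ is precisely the unit segment
\[
\mathrm{E}(X,\delta_S) = \{\, g_t := (1-t)\mathbf{1}_A + t\mathbf{1}_B : t \in [0,1]\,\},
\]
so the hypothesis forces every $f \in \mathrm{E}(X,d)$ to admit a writing $f = f_1 + \lambda_S g_t$ for some $f_1 \in \mathrm{E}(X,d_1)$ and $t \in [0,1]$. The condition $f_1 \in \Delta(X,d_1)=\Delta(X,d - \lambda_S\delta_S)$ on same-side pairs translates into $f(x) + f(y) \ge d(x,y) + 2\lambda_S(1-t)$ for $x,y \in A$ and $f(x)+f(y) \ge d(x,y) + 2\lambda_S t$ for $x,y \in B$. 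Given any quadruple $a,a' \in A$, $b,b' \in B$, I would then produce an $f \in \mathrm{E}(X,d)$ whose equality graph $A(f)$ contains one of the three patterns $\{\{a,b\},\{a',b'\}\}$, $\{\{a,b'\},\{a',b\}\}$, or $\{\{a,a'\},\{b,b'\}\}$: the last pattern forces $t=1$ and $t=0$ simultaneously (a contradiction unless $\lambda_S=0$), while either of the first two patterns combined with the forced inequalities on same-side pairs yields
\[
\max\{d(a,b)+d(a',b'),\; d(a',b)+d(a,b')\} \;\ge\; d(a,a')+d(b,b')+2\lambda_S.
\]
Hence the bracket in \eqref{eq:IsoI} is at least $2\lambda_S$ for every quadruple, so taking the infimum gives $\alpha^d_S \ge \lambda_S > 0$. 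This proves both (i) $S \in \mathcal{S}$ and (ii) $\lambda_S \le \alpha_S$.

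The main obstacle will be the construction, for each quadruple, of an extremal $f \in \mathrm{E}(X,d)$ realizing one of the three combinatorial patterns in its equality graph $A(f)$. In the integer-valued (LRC) setting this can be arranged via the polyhedral structure from \cite[Theorem~4.5]{Lan} together with a descent procedure starting from a Kuratowski-type function $d(\cdot, x_0)$; the precise implementation is the technical heart of the argument and is where the structural input of Theorem~\ref{Thm:thm4} together with the admissibility machinery from Section~\ref{sec:Introduction} is most needed.
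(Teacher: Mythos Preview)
For the first inclusion your plan is essentially the paper's: Theorem~\ref{Thm:thm4} gives the $\Delta$-equality
\[
\Delta(X,d) = \R^X \cap \Bigl( \Delta(X,d_1) + \sum_{S \in \mathcal{S}} \lambda_S \Delta(X,\delta_S) \Bigr),
\]
and the passage to \eqref{eq:InjectiveHullDecomposition} is the one-paragraph minimality argument immediately following Theorem~\ref{Thm:thm4} (if $f = f_1 + \sum_S \lambda_S f_S$ is extremal in $\Delta(d)$ but some summand is not extremal in its own $\Delta$, lowering that summand contradicts extremality of $f$). Note that Theorem~\ref{Thm:thm4} is stated for $\Delta$, not for $\mathrm{E}$, so ``immediately'' hides this short step.

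For the converse, your route diverges from the paper's and the ``main obstacle'' you flag is genuine and, as stated, not surmountable. Given $a,a' \in A$ and $b,b' \in B$, there is no reason an extremal $f \in \mathrm{E}(X,d)$ should have \emph{any} edge of $A(f)$ lying entirely inside $\{a,a',b,b'\}$: the condition $\bigcup A(f)=X$ only says each of the four points is covered by \emph{some} edge, and that edge may go to a point of $X \setminus \{a,a',b,b'\}$. A descent from a Kuratowski function will not in general produce your patterns, and the polyhedral structure of \cite{Lan} gives no control over which specific pairs become tight.

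The paper sidesteps this by a two-step reduction you have not used. First (paragraph after Corollary~\ref{Cor:CorEI} in Section~2), the hypothesis $\mathrm{E}(X,d) \subset \mathrm{E}(X,d_1) + \lambda_S \mathrm{E}(X,\delta_S)$ is upgraded to the $\Delta$-equality $\Delta(X,d) = \Delta(X,d_1) + \lambda_S \Delta(X,\delta_S)$ via the $1$-Lipschitz retraction $p \colon \Delta(d) \to \mathrm{E}(d)$ with $p(f) \le f$: decompose $p(f)$ by hypothesis and absorb the nonnegative remainder $f-p(f)$ into the $d_1$-summand. Second (Lemma~\ref{Lem:FirstPartDecompositionDelta}), the $\Delta$-equality restricts verbatim to any subset, in particular to $Y=\{a,a',b,b'\}$; on this four-point space one writes down explicitly an $f \in \mathrm{E}(Y,d)$ with $f(a)+f(a')=d(a,a')$, $f(a)+f(b)=d(a,b)$, $f(a')+f(b)=d(a',b)$, and reads off $\alpha^d_{\{A,B\}} \ge \lambda_S$ from its forced decomposition. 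Passing to the four-point space makes the extremal-function construction trivial; insisting on working in $\mathrm{E}(X,d)$ on all of $X$, as you propose, turns it into the hard step.
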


The present work is divided into three main parts: Section 2 deals with the generalization of the decomposition theory of Bandelt and Dress (cf. \cite{BanD}) to (possibly) infinite metric spaces with integer-valued metric. Section 3 deals with the proof of Theorem \ref{Thm:t11}, we adapt and generalize the arguments of \cite{HubKM} to infinite metric spaces and infinite split systems. Section 4 starts with the observation that the Buneman complex $B(\mathcal{S},\alpha)$, which is a well-known object in discrete mathematics (cf. \cite{DreHM} and the references there) satisfies the CAT(0) link condition and continues with the proof that $\mathrm{E}(X,d)$ satisfies this same condition. Finally, Section 5 deals with several examples.

\section{Decomposition Theory}
It is easy to see that $\mathrm{E}(X,d)$ as defined at the beginning of the Introduction is a subset of
\[
\Delta_1(X,d) := \{f \in \Delta(X,d): \text{$f$ is $1$-Lipschitz}\}.
\]
Note that a function $f \in \R^X$ belongs to $\Delta_1(X,d)$ if and only if 
\begin{equation} \label{eq:f-d}
\|f - d_x\|_\infty 
= f(x) \quad \text{for all $x \in X$.}
\end{equation}
The metric $d_{\infty}(f,g) := \|f - g\|_\infty$ on $\Delta_1(X,d)$ is thus well-defined since for any $x \in X$, one has
\[
\|f - g\|_\infty \le \|f - d_x\|_\infty + \|g - d_x\|_\infty = f(x) + g(x) < \infty.
\] 
The set $\mathrm{E}(X,d) \sub \Delta_1(X,d)$ is equipped with the induced metric and one has the canonical isometric map
\[
\e \colon (X,d) \to \mathrm{E}(X,d), \quad \e(x) = d_x.
\]
In case $(X,d)$ is a metric space, $\e$ is an isometric embedding and Isbell showed that $(\e,\mathrm{E}(X,d))$ is indeed an injective hull of $X$. That is, $\mathrm{E}(X,d)$ is an injective metric space, and every isometric embedding of $X$ into another injective metric space factors through $\e$. A metric space $(X,d)$ is called \textit{injective} if for any isometric embedding $i \colon Y \to Z$ of metric spaces and any $1$-Lipschitz (i.e., distance nonincreasing) map $f \colon Y \to X$ there exists a $1$-Lipschitz map $g \colon Z \to X$, so that $g \circ i = f$ (cf. \cite[Section~9]{AdaHS} for the general categorical definition). For a recent survey of injective metric spaces, we refer to \cite[Section~2]{Lan}.

Let $(X,d)$ be any pseudometric space. A \textit{partial split} $S = \{A,B\}$ of $X$ is a pair of non-empty subsets of $X$ such that $A \cap B = \emptyset$. If in addition $X = A \cup B$ holds, then $S = \{A,B\}$ is a split of $X$. A \textit{partial $d$-split} is a partial split $S = \{A,B\}$ for which $\alpha^d_S >0$. For any $\{a,a'\} \subset A$ and $\{b,b'\} \subset B$, let
\begin{align*}
\beta_{ \{ \{a,a'\},\{b,b'\} \} }^d := \frac{1}{2} \bigl[ \max\{d(a,b) + d(a',b'), d(a',b) + d(a,b'),d(a,a') + d(b,b')\}& \\
- d(a,a') - d(b,b') & \bigr] .
\end{align*}
When the reference to the pseudometric is unnecessary, we shall omit it and write simply $\alpha_{\{A,B\}}$ as well as $\beta_{ \{ \{a,a'\},\{b,b'\} \} }$. Note that for any pseudometric $d$ on $X$, one has 
\begin{equation}\label{eq:AlphaBeta}
\alpha^d_{\{\{r,s\},\{t,u\}\}}=\beta^d_{\{\{r,s\},\{t,u\}\}}.
\end{equation}
Indeed, 
\[
d(r,t) + d(s,u) - d(r,s) - d(t,u) 
\le d(r,t) + d(u,r) - d(t,u)
= 2\beta^d_{\{\{r\},\{t,u\}\}}
\]
and
\[
d(r,t) + d(s,u) - d(r,s) - d(t,u) 
\le d(r,t) + d(u,r) - d(t,u)
\le 2d(r,t)
= 2\beta^d_{\{\{r\},\{t\}\}}.
\]

\begin{Lem}\label{Thm:BetaAlphaPlusAlpha}
Let $(X,d)$ be a pseudometric space and $\{A_0,B_0\}$ a partial $d$-split of $X$. For any $\{a,a'\} \subset A_0$, $\{b,b'\} \subset B_0$ and $x \in X \setminus ( A_0 \cup B_0)$, one has
\begin{equation}\label{eq:myequation0}
\beta_{ \{ \{a,a'\},\{b,b'\} \} } \ge \alpha_{ \{ \{a,a',x\}  ,\{b,b'\} \} } + \alpha_{ \{ \{a,a'\}  ,\{b,b',x\}  \} }.
\end{equation}
\end{Lem}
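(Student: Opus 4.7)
The plan is to bound each isolation index on the right-hand side of \eqref{eq:myequation0} by the $\beta$-value of a well-chosen $4$-tuple from the infimum definition, and then verify by a finite case analysis that the sum of these bounds does not exceed $2\beta_{\{\{a,a'\},\{b,b'\}\}}$. To set notation, write $s_1 = d(a,b)$, $s_2 = d(a',b')$, $t_1 = d(a,b')$, $t_2 = d(a',b)$, $p = d(a,a')$, $q = d(b,b')$, $u = d(a,x)$, $u' = d(a',x)$, $v = d(b,x)$, $v' = d(b',x)$. After relabelling $\{b,b'\}$ if needed, one may assume $\max\{s_1+s_2,\,t_1+t_2,\,p+q\} = s_1+s_2$, so that $2\beta_{\{\{a,a'\},\{b,b'\}\}} = s_1+s_2-p-q$ and $t_1+t_2 \le s_1+s_2$.

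I would then introduce four candidate $4$-tuples, two for each isolation index: for $\alpha_{\{\{a,a',x\},\{b,b'\}\}}$ the tuples $(a,x;b,b')$ and $(a',x;b,b')$, and for $\alpha_{\{\{a,a'\},\{b,b',x\}\}}$ the tuples $(a,a';x,b')$ and $(a,a';x,b)$; each gives an upper bound of the form $\tfrac12(\max\{\cdot,\cdot,\cdot\} - \text{stuff})$. The key observation is that these bounds come in four natural \emph{telescoping} pairings, each producing an exact cancellation: the first arguments of $(a,x;b,b')$ and $(a,a';x,b')$ sum to $s_1+s_2-p-q$; the second arguments of $(a',x;b,b')$ and $(a,a';x,b)$ sum to the same $s_1+s_2-p-q$; and the mixed pairings of $(a,x;b,b')$ with $(a,a';x,b)$ (second/first) and of $(a',x;b,b')$ with $(a,a';x,b')$ (first/second) both telescope to $t_1+t_2-p-q$, which is bounded above by $2\beta_{\{\{a,a'\},\{b,b'\}\}}$ thanks to the WLOG hypothesis.

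The case analysis then closes the argument. Whenever the max in some $4$-tuple is attained at its third (``stuff'') argument, the corresponding $\alpha$ vanishes, and the claim reduces to the trivial bound $\alpha \le \beta_{\{\{a,a'\},\{b,b'\}\}}$ (realised by the $4$-tuple $(a,a';b,b')$). In the remaining configurations, encoded by the signs of $s_1-t_1$ and $t_2-s_2$ against $v-v'$ and of $t_1-s_2$ and $s_1-t_2$ against $u-u'$, one checks directly that at least one of the four telescoping pairings is applicable; the inequality $t_1-s_2 \le s_1-t_2$, equivalent to $t_1+t_2 \le s_1+s_2$, is exactly what rules out the interleaved configuration in which no telescoping would work.

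The main obstacle is the combinatorial bookkeeping of these configurations --- there are a priori several independent max choices to track --- but the four telescopings are tailored to cover precisely the four relevant joint configurations, so once one commits to the right $4$-tuples, the case analysis reduces to a short and essentially mechanical check.
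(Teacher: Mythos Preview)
Your proof is correct, and the core idea --- bound each $\alpha$ by a $\beta$ of a four-point subset involving $x$, then pair these $\beta$'s so that the cross-terms telescope --- is exactly what the paper does. The organization differs: the paper argues by contradiction, fixing $\beta_{\{\{a_1,x\},\{b_1,b_2\}\}}$, reading off which cross-sum realises its maximum, pairing it with the matching $\beta_{\{\{a_1,a_2\},\{x,b_i\}\}}$, and then closing with a symmetry trick (swapping $a_1\leftrightarrow a_2$ yields the reverse strict inequality). You instead lay out all four telescoping pairings in advance and do a direct case split on where $u-u'$ and $v-v'$ sit relative to the thresholds $t_1-s_2\le s_1-t_2$ and $s_1-t_1$, $t_2-s_2$; the inequality $t_1+t_2\le s_1+s_2$ plays the same pivotal role in both arguments. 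Your version is a bit longer but makes the case structure explicit; the paper's contradiction-plus-symmetry is more compact but less transparent about why every configuration is covered.
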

\begin{proof}
Assume that \eqref{eq:myequation0} fails for some $a_1,a_2,b_1,b_2,x$, then all three quantities must be positive. For simplicity, we write $xy$ for $d(x,y)$. Let $\{i,j\} = \{1,2\}$ be so that
\[
\beta_{ \{ \{a_1,x\},\{b_1,b_2\} \} } = \frac{1}{2}\Bigl(a_1b_j + xb_i - a_1x - b_1b_2 \Bigr).
\]
It follows that
\begin{align*}
&\frac{1}{2}\Bigl(a_1b_j + xb_i - a_1x - b_1b_2 + \max \{ a_1x + a_2b_i,a_1b_i+a_2x \} -a_1a_2 - xb_i\Bigr) = \\
& \beta_{ \{ \{a_1,x\},\{b_1,b_2\} \} } + \beta_{ \{ \{a_1,a_2\},\{x,b_i\} \} } \ge \\
&\alpha_{ \{ \{a_1,a_2,x\} , \{ b_1,b_2 \}  \} } + \alpha_{ \{ \{a_1,a_2\}  ,\{b_1,b_2,x\}  \} } > \\
&\beta_{ \{ \{a_1,a_2\},\{b_1,b_2\} \} } =\\
&\frac{1}{2}\Bigl(\max \{ a_1b_1+a_2b_2,a_1b_2+a_2b_1 \} -a_1a_2 - b_1b_2\Bigr).
\end{align*}
Hence
\[
\delta := a_1b_j - a_1x + \max \{ a_1x+a_2b_i,a_1b_i+a_2x \} > \max \{ a_1b_1 + a_2b_2,a_1b_2 + a_2b_1 \}.
\]
The above strict inequality can only hold if $a_1b_i+a_2x > a_1x+a_2b_i$ which implies that $\delta= a_1b_j - a_1x + a_1b_i + a_2x$.  Therefore, one has
\[
a_1b_1 + a_1b_2 - a_1x +a_2x = \delta > \max \{ a_1b_1 + a_2b_2, a_1b_2 + a_2b_1\}.
\]
Hence for each $k \in \{1,2\}$, one has
\[
a_1b_k + a_2x > a_1x + a_2b_k.
\]
By interchanging the role of $a_1$ and $a_2$, we also obtain the reverse strict inequality and this is a contradiction. This proves \eqref{eq:myequation0}. 
\end{proof}

\begin{Thm}\label{Thm:IsolationIndexExtension} 
Let $(X,d)$ be a finite pseudometric space and let $\{A_0,B_0\}$ be a partial $d$-split. Then 
\[
\sum \bigl \{ \alpha_{\{A,B\}} : \{A,B\} \text{ is a } d\text{-split extending } \{A_0,B_0\} \bigr \} \le \alpha_{\{A_0,B_0\}}.
\]
\end{Thm}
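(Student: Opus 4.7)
The plan is to argue by induction on $n := |X \setminus (A_0 \cup B_0)|$, with Lemma~\ref{Thm:BetaAlphaPlusAlpha} providing the key splitting inequality, and to first dispose of the case in which $\{A_0, B_0\}$ is not a partial $d$-split. The relevant observation is that the isolation index is monotone under extension of a partial split: if $A_0 \subset A$ and $B_0 \subset B$, then
\[
\alpha_{\{A,B\}} = \inf_{a,a' \in A,\, b,b' \in B} \beta_{\{\{a,a'\},\{b,b'\}\}} \le \inf_{a,a' \in A_0,\, b,b' \in B_0} \beta_{\{\{a,a'\},\{b,b'\}\}} = \alpha_{\{A_0, B_0\}},
\]
so if $\alpha_{\{A_0, B_0\}} = 0$ no $d$-split extends $\{A_0, B_0\}$ and the claimed inequality is trivial. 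From now on I assume $\{A_0, B_0\}$ is a partial $d$-split.

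The base case $n = 0$ is immediate, since the only candidate extension is $\{A_0, B_0\}$ itself. For the inductive step pick any $x \in X \setminus (A_0 \cup B_0)$, apply Lemma~\ref{Thm:BetaAlphaPlusAlpha} for each quadruple $a, a' \in A_0$, $b, b' \in B_0$, and take the infimum over these quadruples. Since
\[
\alpha_{\{\{a,a',x\},\{b,b'\}\}} \ge \alpha_{\{A_0 \cup \{x\}, B_0\}} \qquad \text{and} \qquad \alpha_{\{\{a,a'\},\{b,b',x\}\}} \ge \alpha_{\{A_0, B_0 \cup \{x\}\}}
\]
(each infimum on the right ranges over a larger set of quadruples), one obtains
\[
\alpha_{\{A_0, B_0\}} \ge \alpha_{\{A_0 \cup \{x\}, B_0\}} + \alpha_{\{A_0, B_0 \cup \{x\}\}}.
\]
Both $\{A_0 \cup \{x\}, B_0\}$ and $\{A_0, B_0 \cup \{x\}\}$ have $n-1$ outside points, so the inductive hypothesis (combined with the preliminary reduction when one of them has zero isolation index) bounds each term by the corresponding sum of $\alpha_{\{A,B\}}$ over extending $d$-splits. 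Because any $d$-split $\{A, B\} \supset \{A_0, B_0\}$ puts $x$ in exactly one of $A$ or $B$, those two sums partition the set of $d$-splits extending $\{A_0, B_0\}$, and adding completes the induction.

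The principal obstacle is that Lemma~\ref{Thm:BetaAlphaPlusAlpha} is stated only for partial $d$-splits, so one must ensure the induction still carries through even when an intermediate pair $\{A_0 \cup \{x\}, B_0\}$ or $\{A_0, B_0 \cup \{x\}\}$ collapses to zero isolation index. This is exactly what the preliminary monotonicity reduction handles: whenever $\alpha$ drops to $0$, the corresponding set of extending $d$-splits is empty, so that branch of the induction contributes nothing on either side, and the argument proceeds uniformly.
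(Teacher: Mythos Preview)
Your proof is correct and follows essentially the same approach as the paper: induction on $|X\setminus(A_0\cup B_0)|$ via the splitting inequality $\alpha_{\{A_0,B_0\}}\ge \alpha_{\{A_0\cup\{x\},B_0\}}+\alpha_{\{A_0,B_0\cup\{x\}\}}$ obtained from Lemma~\ref{Thm:BetaAlphaPlusAlpha}. The paper is much terser (it picks a realizing quadruple using finiteness rather than invoking monotonicity before the infimum, and it omits the explicit discussion of the zero-index branches and the partition of extending splits), but the substance is the same.
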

\begin{proof} 
Note that since $X$ is finite, there are $\{a,a'\} \subset A_0$ and $\{b,b'\} \subset B_0$ such that 
\[
\alpha_{\{A_0,B_0\}} = \beta_{ \{ \{a,a'\},\{b,b'\} \} }
\]
and thus we obtain
\begin{equation}\label{eq:IsoI1}
\alpha_{\{A_0,B_0\}} \ge \alpha_{ \{ A_0 \cup \{ x \}  ,B_0 \} } + \alpha_{ \{ A_0  ,B_0 \cup \{ x \}  \} }.
\end{equation}
By induction, the desired result follows.
\end{proof}

Note now that if $(X,d)$ is a pseudometric space with integer-valued pseudometric, then for any split $S$, one has 
\[
\alpha_S \in [0,\infty) \cap \frac{1}{2}\Z.
\]

\begin{Lem}\label{Lem:FinitelyManySeparatingSplits}
Let $(X,d)$ is a pseudometric space with integer-valued pseudometric. For every $x,y \in X$, there are at most $2 d(x,y)$ distinct $d$-splits $S$ satisfying $S(x) \neq S(y)$.
\end{Lem}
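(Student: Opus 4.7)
The plan is to reduce to the finite case already handled by Theorem~\ref{Thm:IsolationIndexExtension}, via restriction of the pseudometric to a well-chosen finite subset $Y \sub X$ containing $x$, $y$, and enough points to keep distinct $d$-splits distinct after restriction.

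First I would dispose of the degenerate case $d(x,y)=0$: any split $S=\{A,B\}$ with $x \in A$, $y \in B$ satisfies
\[
\alpha^d_S \le \beta^d_{\{\{x\},\{y\}\}} = d(x,y) = 0,
\]
so no separating split can be a $d$-split, and the bound $0 \le 2 d(x,y)$ holds trivially. From now on I assume $d(x,y) \ge 1$.

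Next, suppose $S_1,\dots,S_n$ are distinct $d$-splits separating $x$ and $y$, written $S_i = \{A_i,B_i\}$ with $x \in A_i$, $y \in B_i$. For each pair $i \neq j$ I pick a point $z_{ij} \in (A_i \cap B_j) \cup (B_i \cap A_j)$, which exists because $S_i \neq S_j$. Set $Y := \{x,y\} \cup \{z_{ij} : 1 \le i<j \le n\}$, a finite set, and equip it with $d|_Y$. The restrictions $S_i|_Y := \{A_i \cap Y, B_i \cap Y\}$ are then splits of $Y$, pairwise distinct by construction. Moreover, since the infimum in \eqref{eq:IsoI} defining $\alpha^{d|_Y}_{S_i|_Y}$ is taken over a subset of the pairs available for $\alpha^d_{S_i}$, one has $\alpha^{d|_Y}_{S_i|_Y} \ge \alpha^d_{S_i} > 0$. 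Because $d|_Y$ is integer-valued, every such isolation index lies in $\frac{1}{2}\Z_{>0}$, so $\alpha^{d|_Y}_{S_i|_Y} \ge \tfrac{1}{2}$.

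Finally I would apply Theorem~\ref{Thm:IsolationIndexExtension} to the finite pseudometric space $(Y,d|_Y)$ and the partial split $\{\{x\},\{y\}\}$. Direct computation gives $\alpha^{d|_Y}_{\{\{x\},\{y\}\}} = \beta^{d|_Y}_{\{\{x\},\{y\}\}} = d(x,y) > 0$, so this is indeed a partial $d|_Y$-split, and each $S_i|_Y$ extends it. Theorem~\ref{Thm:IsolationIndexExtension} yields
\[
\frac{n}{2} \le \sum_{i=1}^n \alpha^{d|_Y}_{S_i|_Y} \le \alpha^{d|_Y}_{\{\{x\},\{y\}\}} = d(x,y),
\]
hence $n \le 2d(x,y)$.

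The only delicate point is justifying that restriction to $Y$ can only raise the isolation index of a split (so that restricted splits remain $d|_Y$-splits), and that Theorem~\ref{Thm:IsolationIndexExtension} is applicable when the chosen partial split has singleton sides; the latter is fine because the definition of $\beta$ allows the pairs $\{a,a'\}$, $\{b,b'\}$ to be singletons by taking $a=a'$ and $b=b'$.
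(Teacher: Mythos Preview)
Your proof is correct and follows essentially the same route as the paper: restrict to a finite subset containing $x$ and $y$, observe that isolation indices can only increase under restriction, and apply Theorem~\ref{Thm:IsolationIndexExtension} to the partial split $\{\{x\},\{y\}\}$ to bound $n/2$ by $d(x,y)$. The only cosmetic difference is in how the finite set is chosen: the paper adds, for each $S_i$, four witness points realizing $\alpha^d_{S_i}$, whereas you add, for each pair $i\neq j$, a single point $z_{ij}$ distinguishing $S_i$ from $S_j$; your choice makes the distinctness of the restricted splits explicit, which is exactly what is needed to invoke Theorem~\ref{Thm:IsolationIndexExtension}.
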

\begin{proof}
Assume by contradiction that one can find $2 d(x,y)+1$ distinct $d$-splits $S_i$ such that $S_i(x) \neq S_i(y)$. For each $i \in \{1,\dots, 2 d(x,y)+1\}$, pick $a_i,a_i' \in S_i(x)$ and $b_i,b_i' \in S_i(y)$ such that $\alpha_{\{ \{a_i,a_i'\},\{b_i,b_i'\} \}}=\alpha_{S_i} \ge 1/2$. Set $Z:= \{x,y\} \cup \bigcup_i \{a_i,a_i'\} \cup \bigcup_i \{b_i,b_i'\} \subset X$, one has:
\[
d(x,y)+\frac{1}{2} \le \sum_i \alpha_{S_i} \le \sum_i \alpha_{\{ S_i(x) \cap Z, S_i(y) \cap Z \}}
\]
and by Theorem~\ref{Thm:IsolationIndexExtension} applied to $x,y \in Z$, the right-hand side is less than or equal to $\alpha_{\{ \{x\},\{y\} \}} = d(x,y)$, which is a contradiction.
\end{proof}

Furthermore, it is easy to see that if $S=\{A,B\}$ is a $d$-split of $X$ such that $S(x)\neq S(y)$ for some pair of points $x,y$ such that $d(x,y)=1$, then one has $C(x,y):=\{z \in X : d(x,y) + d(y,z) = d(x,z)\}  \subset S(y)$ and $C(y,x) \subset S(x)$. Note however that one might have $X = C(x,y) \cup C(y,x)$ without $\{C(x,y),C(y,x)\}$ being a $d$-split, this is the case for instance if $X=\mathrm{Cayley}(\Delta(3,3,3))$, i.e. the vertices of the plane tessellation by hexagons, endowed with the path-metric. Indeed, in this case, $X$ is bipartite and thus $X = C(x,y) \cup C(y,x)$. However, $d$ is in this case a split-prime metric.

\begin{Thm}\label{Thm:thm1} 
Let $(X,d)$ be a pseudometric space with integer-valued pseudometric and let $\{A_0,B_0\}$ be a partial $d$-split. Then 
\[
\sum \bigl \{ \alpha_{\{A,B\}} : \{A,B\} \text{ is a } d\text{-split extending } \{A_0,B_0\} \bigr \} \le \alpha_{\{A_0,B_0\}}.
\]
\end{Thm}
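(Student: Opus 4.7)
The plan is to reduce to the finite case (Theorem~\ref{Thm:IsolationIndexExtension}) by restricting $d$ to a sufficiently large finite subset $Z \subset X$. Since all summands on the left-hand side are non-negative, the sum is well-defined as the supremum of its finite partial sums, and it therefore suffices to prove, for every finite family $S_1, \ldots, S_n$ of pairwise distinct $d$-splits extending $\{A_0, B_0\}$, the inequality $\sum_{i=1}^n \alpha^d_{S_i} \le \alpha^d_{\{A_0, B_0\}}$.

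The integer-valued hypothesis enters through the fact that $[0,\infty) \cap \tfrac{1}{2}\Z$ is well-ordered, so the infimum defining each $\alpha^d_S$ is actually attained. Writing $S_0 := \{A_0, B_0\}$, I therefore pick, for each $i \in \{0, 1, \ldots, n\}$, witnesses $a_i, a_i' \in A_i$ and $b_i, b_i' \in B_i$ with $\alpha^d_{S_i} = \beta^d_{\{\{a_i, a_i'\}, \{b_i, b_i'\}\}}$. Since the splits $S_1, \ldots, S_n$ are pairwise distinct, I also pick, for each pair $1 \le i < j \le n$, a separating point $z_{ij} \in (A_i \cap B_j) \cup (A_j \cap B_i)$, which exists because $A_0 \subset A_i \cap A_j$ forces any asymmetry between $S_i$ and $S_j$ to occur outside $A_0 \cup B_0$. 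Let $Z \subset X$ be the finite set consisting of all these witnesses together with all $z_{ij}$, and write $d_Z := d|_Z$.

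On the finite pseudometric space $(Z, d_Z)$, the pairs $\{A_i \cap Z, B_i \cap Z\}$ for $1 \le i \le n$ are pairwise distinct nontrivial splits (distinctness is enforced by the $z_{ij}$ together with the inclusion of $a_0 \in A_0 \cap Z$ and $b_0 \in B_0 \cap Z$, which rules out accidental swapping; nontriviality is immediate from the witnesses), and each extends the partial $d_Z$-split $\{A_0 \cap Z, B_0 \cap Z\}$. Passing from $d$ to $d_Z$ shrinks the domain of each defining infimum and hence can only increase isolation indices, so for $1 \le i \le n$ we have $\alpha^{d_Z}_{\{A_i \cap Z, B_i \cap Z\}} \ge \beta^d_{\{\{a_i, a_i'\}, \{b_i, b_i'\}\}} = \alpha^d_{S_i}$. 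Conversely, because $Z$ contains the attaining witnesses $a_0, a_0', b_0, b_0'$, one has $\alpha^{d_Z}_{\{A_0 \cap Z, B_0 \cap Z\}} \le \beta^d_{\{\{a_0, a_0'\}, \{b_0, b_0'\}\}} = \alpha^d_{\{A_0, B_0\}}$. Applying Theorem~\ref{Thm:IsolationIndexExtension} inside $(Z, d_Z)$ now gives
\[
\sum_{i=1}^n \alpha^d_{S_i} \;\le\; \sum_{i=1}^n \alpha^{d_Z}_{\{A_i \cap Z, B_i \cap Z\}} \;\le\; \alpha^{d_Z}_{\{A_0 \cap Z, B_0 \cap Z\}} \;\le\; \alpha^d_{\{A_0, B_0\}},
\]
and taking the supremum over all such finite subcollections of $d$-splits finishes the proof.

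The main obstacle is to reconcile the two opposite directions of monotonicity under restriction: passing to $Z$ weakly increases every isolation index, which is exactly what one needs to dominate the summands on the left, but is the wrong direction to control the right-hand side. The remedy is to build $Z$ so that it contains attaining witnesses for $\alpha^d_{\{A_0, B_0\}}$, and the existence of such attaining witnesses is precisely the point at which the integer-valued hypothesis is used essentially.
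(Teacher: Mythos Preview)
Your proof is correct and takes a somewhat different route from the paper's. The paper first invokes Lemma~\ref{Lem:FinitelyManySeparatingSplits} to conclude that there are only finitely many $d$-splits extending $\{A_0,B_0\}$, and then works entirely inside $X$: it picks the separating points $x_{ij}$ just as you do, but instead of restricting the metric it applies the basic inequality~\eqref{eq:IsoI1} recursively, adjoining one $x_{ij}$ at a time to $\{A_0,B_0\}$ and summing over all $2^{|Z|}$ resulting partial splits. Your argument bypasses Lemma~\ref{Lem:FinitelyManySeparatingSplits} and the recursion by restricting $d$ to a finite set $Z$ and quoting Theorem~\ref{Thm:IsolationIndexExtension} as a black box; in effect your proof is the same restriction trick used to prove Lemma~\ref{Lem:FinitelyManySeparatingSplits} itself. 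What your approach buys is modularity (a clean reduction to the already-established finite case) and independence from Lemma~\ref{Lem:FinitelyManySeparatingSplits}; what the paper's buys is the extra information that the family of extending $d$-splits is genuinely finite, not merely that its isolation indices have a finite sum. A minor remark: the witnesses $a_i,a_i',b_i,b_i'$ for $i\ge 1$ are not actually needed in your construction of $Z$, since the domain-shrinking monotonicity already gives $\alpha^{d_Z}_{\{A_i\cap Z,\,B_i\cap Z\}}\ge \alpha^d_{S_i}$ and nonemptiness of $A_i\cap Z$, $B_i\cap Z$ follows from $a_0,b_0\in Z$; only the witnesses for $i=0$ and the separators $z_{ij}$ do real work.
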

\begin{proof} 
Note that since $X$ has integer-valued metric, there are $\{a,a'\} \subset A_0$ and $\{b,b'\} \subset B_0$ such that 
\[
\alpha_{\{A_0,B_0\}} = \beta_{ \{ \{a,a'\},\{b,b'\} \} }
\]
and thus using \ref{eq:myequation0}, we obtain
\begin{equation}\label{eq:IsoI1}
\alpha_{\{A_0,B_0\}} \ge \alpha_{ \{ A_0 \cup \{ x \}  ,B_0 \} } + \alpha_{ \{ A_0  ,B_0 \cup \{ x \}  \} }.
\end{equation}

Since by Lemma~\ref{Lem:FinitelyManySeparatingSplits}, there are only finitely many $d$-splits separating any pair of points in $X$, there are only finitely many $d$-splits $\{S_i\}_{i \in \{1,\dots,m\}}$ extending $\{ A_0,B_0\}$, i.e. such that one has $S_i :=\{A_i,B_i\}$ where $A_0 \subset A_i$ and $B_0 \subset B_i$. For every $1 \le i < j \le m $, choose $x_{ij} \in X \setminus (A_0 \cup B_0)$ such that either $x_{ij} \in A_i \cap B_j$ or $x_{ij} \in B_i \cap A_j$ (in particular neither $x_{ij} \in A_i \cap A_j$ nor $x_{ij} \in B_i \cap B_j$). Writing $Z:=(x_{ij})_{1 \le i < j \le m}$ and applying \eqref{eq:IsoI1} recursively for each element of $Z$, we obtain:
\[
\alpha_{\{A_0,B_0\}} \ge \sum_{Z_A \subset Z} \alpha_{\{A_0 \cup Z_A,B_0 \cup (Z \setminus Z_A) \}}.
\]
Note that by choice of $Z$, there is for any $\{A_i,B_i\}$ a unique $Z_A \subset Z$ (namely $Z_A := Z \cap A_i$) such that $\{A_i,B_i\}$ extends $\{A_0 \cup Z_A,B_0 \cup (Z \setminus Z_A) \}$ which implies
\[
\alpha_{\{A_0 \cup Z_A,B_0 \cup (Z \setminus Z_A) \}} \ge \alpha_{\{A_i,B_i\}}.
\]
It follows that
\[
\alpha_{\{A_0,B_0\}} \ge \sum_{i \in \{1,\dots,m\}} \alpha_{\{A_i,B_i\}}.
\]
This concludes the proof.
\end{proof}

\begin{Lem}\label{Lem:DecompositionInduction}
Let $(X,d)$ be a pseudometric space with integer-valued pseudometric. Let $\mathcal{S}$ be the set of all $d$-splits and let $\widetilde{\mathcal{S}} \subset \mathcal{S}$ be any finite subset. If $\lambda_S \in (-\infty,\alpha^d_S]$ for every $S \in \widetilde{\mathcal{S}}$ and $\lambda_S := 0$ for every other split, then $\widetilde{d} := d - \sum_{S \in \widetilde{\mathcal{S}} } \lambda_{S} \delta_S$ is a pseudometric such that for every split $S$ of $X$, one has
\[
\alpha_S^{\widetilde{d}} = \alpha_S^d - \lambda_S.
\]
\end{Lem}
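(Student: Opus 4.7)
The plan is to induct on $n := |\widetilde{\mathcal{S}}|$ and reduce everything to a one-split step. The base case $n = 0$ is trivial. For the inductive step, pick any $S_0 \in \widetilde{\mathcal{S}}$ and apply the induction hypothesis to $\widetilde{\mathcal{S}} \setminus \{S_0\}$ to obtain a pseudometric $d'$ with $\alpha^{d'}_S = \alpha^d_S - \lambda_S$ for $S \in \widetilde{\mathcal{S}} \setminus \{S_0\}$ and $\alpha^{d'}_S = \alpha^d_S$ otherwise. In particular $\lambda_{S_0} \le \alpha^d_{S_0} = \alpha^{d'}_{S_0}$, so everything reduces to the following \emph{one-split claim}: if $d'$ is a pseudometric on $X$, $S = \{A, B\}$ is a $d'$-split, and $\lambda \le \alpha^{d'}_S$, then $\widetilde{d} := d' - \lambda \delta_S$ is a pseudometric, and for every split $S'$ of $X$ one has $\alpha^{\widetilde{d}}_{S'} = \alpha^{d'}_{S'} - \lambda$ if $S' = S$ and $\alpha^{\widetilde{d}}_{S'} = \alpha^{d'}_{S'}$ otherwise.

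For the pseudometric properties of $\widetilde{d}$, only non-negativity on $S$-separated pairs and the triangle inequality require work. If $\delta_S(x,y) = 1$, testing~\eqref{eq:IsoI} with $a = a' = x$, $b = b' = y$ yields $\lambda \le \alpha^{d'}_S \le d'(x,y)$, whence $\widetilde{d}(x,y) \ge 0$. The triangle inequality reduces, by a case analysis on which sides of $S$ the points $x,y,z$ occupy, to the single non-trivial case $x, z \in A$ and $y \in B$ (up to symmetry), where $2\lambda \le 2\alpha^{d'}_S \le d'(x,y) + d'(y,z) - d'(x,z)$ follows by testing~\eqref{eq:IsoI} with $a = x$, $a' = z$, $b = b' = y$.

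For the isolation indices, fix a split $S' = \{A', B'\}$, a quadruple $a, a' \in A'$, $b, b' \in B'$, and denote the three sums appearing in~\eqref{eq:IsoI} by $P, Q, R$. A case-by-case computation classifying the four points by their side of $S$ shows that $\beta^{\widetilde{d}}_{\{\{a,a'\},\{b,b'\}\}}$ equals $\beta^{d'}_{\{\{a,a'\},\{b,b'\}\}}$ in every configuration except the \emph{exceptional} $(2,2)$-configurations where $\{a,a'\}$ and $\{b,b'\}$ lie on opposite sides of $S$; there one finds $\beta^{\widetilde{d}} = \max(\beta^{d'} - \lambda, 0)$. (In each of the remaining $(2,2)$-configurations, for instance $\{a,b\} \subset A$ and $\{a',b'\} \subset B$, the fact that $S$ extends the partial split $\{\{a,b\},\{a',b'\}\}$ combined with $\alpha^{d'}_S \ge \lambda$ provides enough slack for the $-2\lambda$-shifts on $Q$ and $R$ to cancel in $\max(P, Q', R') - R'$.) When $S' = S$ every admissible quadruple is exceptional, $\beta^{d'} \ge \alpha^{d'}_S \ge \lambda$ gives $\beta^{\widetilde{d}} = \beta^{d'} - \lambda$, and taking infima yields $\alpha^{\widetilde{d}}_S = \alpha^{d'}_S - \lambda$.

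The main obstacle is the case $S' \neq S$: naively $\beta^{\widetilde{d}}$ may drop below $\beta^{d'}$ on exceptional quadruples and let $\alpha^{\widetilde{d}}_{S'}$ fall below $\alpha^{d'}_{S'}$. This is prevented by a strict gap from the extension result: each exceptional partial split $\{A' \cap A, B' \cap B\}$ or $\{A' \cap B, B' \cap A\}$ is simultaneously extended by the distinct $d'$-splits $S$ and $S'$, so applying Theorem~\ref{Thm:IsolationIndexExtension} on any finite subspace containing the quadruple together with a point of $A \bigtriangleup A'$ (to ensure that $S$ and $S'$ restrict to distinct splits of the subspace) yields $\beta^{d'}_{\{\{a,a'\},\{b,b'\}\}} \ge \alpha^{d'}_S + \alpha^{d'}_{S'}$ on every exceptional quadruple. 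Hence $\beta^{\widetilde{d}} \ge \alpha^{d'}_{S'}$ there, while $\beta^{\widetilde{d}} = \beta^{d'}$ elsewhere; since $\alpha^{d'}_S > 0$, the infimum $\alpha^{d'}_{S'}$ cannot be attained in exceptional quadruples and is therefore approached only through non-exceptional ones, so passing to infima gives $\alpha^{\widetilde{d}}_{S'} = \alpha^{d'}_{S'}$.
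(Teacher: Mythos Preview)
Your argument is correct and follows essentially the same route as the paper: reduce to a single split by induction, verify the triangle inequality, classify how $\beta$ changes according to the position of the quadruple relative to $S$, and invoke the extension inequality (the paper cites Theorem~\ref{Thm:thm1}, you cite Theorem~\ref{Thm:IsolationIndexExtension} on a finite subspace---these amount to the same two-split bound $\beta \ge \alpha_S + \alpha_{S'}$) to rule out the exceptional configurations from contributing to the infimum. The only cosmetic difference is that the paper exploits discreteness of the values of $d'$ to pick an actual minimising quadruple and then shows that quadruple cannot be exceptional, whereas you argue directly with the infimum and the strict gap $\alpha^{d'}_S>0$; your formulation has the mild advantage of not needing the infimum to be attained, but in the present setting $d'$ is a finite perturbation of an integer-valued metric and hence takes discretely many values anyway, so both versions go through.
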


\begin{proof}
Let us denote by $\mathcal{S}$ the family of all $d$-splits and let $\widetilde{S}$ be any subset of $\mathcal{S}$. Moreover, $\widetilde{d} := d - \sum_{S \in \widetilde{S}} \lambda_S \delta_S$ where for each $S \in \widetilde{S}$, we require $\lambda_S \le \alpha^d_S$ (for simplicity, we shall denote $d(x,y)$ simply by $xy$).

We first treat the case where $|\widetilde{S}| < \infty$. It suffices to proves the assertion for 
\[
\widetilde{d} := d - \lambda \delta_{\{A_0,B_0\}}
\]
where $\{A_0,B_0\} \in \mathcal{S}$ and $\lambda \le \alpha^d_{\{A_0,B_0\}}$. We verify that $\widetilde{d}$ is indeed a pseudometric by showing the triangle inequality. Let $x,y,z \in X$ and assume without loss of generality that $x,y \in A_0$. If $x,y,z \in A_0$, then $d$ and $\widetilde{d}$ agree on $\{x,y,z\}$ and we are done. Otherwise, $z \in B_0$, in which case we get
\[
\widetilde{d}(x,z) = xz - \lambda \le xy + yz -\lambda =  \widetilde{d}(x,y) + \widetilde{d}(y,z).
\]
On the other side, since $\lambda \le \alpha^d_{\{A_0,B_0\}} \le \beta^d_{\{\{x,y\},\{z\}\}} = \frac{1}{2} ( xz + yz - xy)$, we obtain by rearranging
\[
xy \le xz - \lambda + yz - \lambda = \widetilde{d}(x,z) + \widetilde{d}(y,z).
\]
Thus, $\widetilde{d}$ is a pseudometric.

Let $\{x,y\}$ and $\{z,w\}$ be two disjoint subsets of $X$. If $\{ \{x,y\},\{z,w\} \}$ extends to $\{A_0,B_0\}$, then clearly
\begin{equation}\label{eq:myequation1}
\beta^{\widetilde{d}}_{\{\{x,y\},\{z,w\}\}} = \beta^d_{\{\{x,y\},\{z,w\}\}} - \lambda.
\end{equation}
Now, we prove that if $\{ \{x,y\},\{z,w\} \}$ does not extend to $\{A_0,B_0\}$, then 
\begin{equation}\label{eq:myequation2}
\beta^{\widetilde{d}}_{\{\{x,y\},\{z,w\}\}} = \beta^d_{\{\{x,y\},\{z,w\}\}}.
\end{equation}
First, if either $A_0$ or $B_0$ contains at least three of $x,y,z,w$, then \eqref{eq:myequation2} clearly holds. We may thus assume without loss of generality that $x,z \in A_0$ and $y,w \in B_0$. Since 
\[
\beta^d_{\{\{x,z\},\{y,w\}\}} 
= \frac{1}{2} \Bigl( \max \{ xy + zw , xw + yz \} - xz - yw \Bigr)
\ge \alpha^d_{\{A_0,B_0\}}
\ge \lambda,
\]
we obtain that $\max \{ xy + zw - 2 \lambda, xw + yz - 2 \lambda \} \ge xz + yw$. Hence
\begin{align*}
\beta^{\widetilde{d}}_{\{\{x,y\},\{z,w\}\}} 
&= \frac{1}{2} \Bigl( \max \{ xz + yw, xw + yz - 2 \lambda, xy + zw - 2 \lambda\} - xy - zw + 2 \lambda \Bigr) \\
&= \frac{1}{2} \Bigl( \max \{ xw + yz - 2 \lambda , xy + zw - 2 \lambda\} - xy - zw + 2 \lambda \Bigr) \\
&= \frac{1}{2} \Bigl( \max \{ xz + yw, xw + yz, xy + zw \} - xy - zw  \Bigr) \\
&= \beta^d_{\{\{x,y\},\{z,w\}\}},
\end{align*}
as required. Finally, it remains to prove that for every split $\{A,B\}$ of $X$, one has
\[
\alpha^{\widetilde{d}}_{\{A,B\}} \; = \; \left\lbrace{\begin{matrix} 
\alpha^d_{\{A,B\}}- \lambda &\text{if }  \{A,B\}=\{A_0,B_0\} , \\ \\
\alpha^d_{\{A,B\}} &\text{otherwise}.
\end{matrix}}\right.
\]
By \eqref{eq:myequation1}, one has $ \alpha^{\widetilde{d}}_{\{A_0,B_0\}} = \alpha^d_{\{A_0,B_0\}} -\lambda$. Let now $\{A,B\}$ be a split of $X$ different from $\{A_0,B_0\}$. Since $X$ has integer-valued metric (it is enough to assume the metric to be taking a discrete set of values) and by \eqref{eq:AlphaBeta}, there are $a,a' \in A$ and $b,b' \in B$ such that 
\[
\alpha^d_{\{A,B\}} = \alpha^d_{\{\{a,a'\},\{b,b'\}\}}=\beta^d_{\{\{a,a'\},\{b,b'\}\}}.
\]
Since $\{A_0,B_0\}$ is a $d$-split, if $\alpha^d_{\{A,B\}}=0$, then $\{A_0,B_0\}$ cannot extend $\{\{a,a'\},\{b,b'\}\}$, and if $\alpha^d_{\{A,B\}}>0$, then $\{A_0,B_0\}$ cannot extend $\{\{a,a'\},\{b,b'\}\}$ either by Theorem \ref{Thm:thm1}. Hence by \eqref{eq:myequation2}, one has
\[
\alpha^d_{\{A,B\}} 
= \beta^d_{\{\{a,a'\},\{b,b'\}\}} 
= \beta^{\widetilde{d}}_{\{\{a,a'\},\{b,b'\}\}}
\ge \alpha^{\widetilde{d}}_{\{A,B\}}.
\]
To prove the reverse inequality, assume that $a,a' \in A$ and $b,b' \in B$ are such that 
\[
\alpha^{\widetilde{d}}_{\{A,B\}} = \alpha^{\widetilde{d}}_{\{\{a,a'\},\{b,b'\}\}}=\beta^{\widetilde{d}}_{\{\{a,a'\},\{b,b'\}\}}.
\]
If $\{A_0,B_0\}$ does not extend $\{\{a,a'\},\{b,b'\}\}$, one has by \eqref{eq:myequation2} that
\[
\alpha^d_{\{A,B\}} \le \beta^d_{\{\{a,a'\},\{b,b'\}\}} = \beta^{\widetilde{d}}_{\{\{a,a'\},\{b,b'\}\}}.
\]
Now, if $\{A_0,B_0\}$ extends $\{\{a,a'\},\{b,b'\}\}$, it follows from Theorem \ref{Thm:thm1} and \eqref{eq:myequation1} that
\begin{align*}
\alpha^d_{\{A,B\}} 
\le \alpha^d_{\{A,B\}} + \alpha^d_{\{A_0,B_0\}} -\lambda
\le \alpha^d_{\{\{a,a'\},\{b,b'\}\}} -\lambda
&\le \beta^d_{\{\{a,a'\},\{b,b'\}\}} -\lambda \\
&= \beta^{\widetilde{d}}_{\{\{a,a'\},\{b,b'\}\}} \\
&= \alpha^{\widetilde{d}}_{\{A,B\}}.
\end{align*}
Using induction, this concludes the case $|\widetilde{S}| < \infty$.
\end{proof}

\begin{proof}[Proof of Theorem~\ref{Thm:thm2}]
We now go on to the case where $|\widetilde{S}| = \infty$. For any $Y:=\{x,y,z\} \subset X$, we can consider the set $\widetilde{\mathcal{S}}_Y \subset \widetilde{\mathcal{S}}$ of all splits in $\widetilde{\mathcal{S}}$ that restrict to a split of $Y$. Since $\widetilde{\mathcal{S}} \subset \mathcal{S}$, $\widetilde{\mathcal{S}}_Y$ is a finite set by Lemma~\ref{Lem:FinitelyManySeparatingSplits}. We set 
\begin{equation}\label{eq:FormulaDYMinusFinite}
\widetilde{d}_Y := d - \sum_{S \in \widetilde{\mathcal{S}}_Y} \lambda_S \delta_S.
\end{equation}
Now, $\widetilde{d}|_{Y \times Y}$ coincides with $\widetilde{d}_Y|_{Y \times Y}$  and the latter satisfies the triangle inequality as we proved in the case $|\widetilde{S}| < \infty$, hence so does the former. It follows that $\widetilde{d}$ is a pseudometric. Let now $S_0:=\{A_0,B_0\}$ be any split of $X$, we consider three cases:

\begin{enumerate}[$(a)$]
\item Assume that $S_0 \in \widetilde{\mathcal{S}}$. Note that $\widetilde{d}$ is in general not integer-valued. For any $\eps > 0$, we can choose $\widetilde{a},\widetilde{a}' \in A_0$ and $\widetilde{b},\widetilde{b}' \in B_0$ such that 
\[
\alpha^{\widetilde{d}}_{\{A_0,B_0\}} \ge \alpha^{\widetilde{d}}_{\{\{\widetilde{a},\widetilde{a}'\},\{\widetilde{b},\widetilde{b}'\}\}} - \eps.
\]
Moreover, we can choose $a,a' \in A_0$ and $b,b' \in B_0$ such that 
\[
\alpha^{d}_{\{A_0,B_0\}} = \alpha^d_{\{\{a,a'\},\{b,b'\}\}}.
\]
By Theorem \ref{Thm:thm1}, $\{A_0,B_0\}$ is the unique $d$-split extending $\{\{a,a'\},\{b,b'\}\}$. Furthermore, set 
\[
Y := \{\widetilde{a},\widetilde{a}',\widetilde{b},\widetilde{b}',a,a',b,b'\}.
\]
We can assume without loss of generality that $S_0 \subset \widetilde{\mathcal{S}}_Y$. We have 
\begin{align*}
\alpha^{\widetilde{d}}_{\{A_0,B_0\}} 
\ge \alpha^{\widetilde{d}}_{\{\{\widetilde{a},\widetilde{a}'\},\{\widetilde{b},\widetilde{b}'\}\}} - \eps 
&\ge \alpha^{\widetilde{d}}_{\{\{\widetilde{a},\widetilde{a}',a,a'\},\{\widetilde{b},\widetilde{b}',b,b'\}\}}- \eps \\ &=\alpha^{\widetilde{d}_Y}_{\{\{\widetilde{a},\widetilde{a}',a,a'\},\{\widetilde{b},\widetilde{b}',b,b'\}\}}- \eps
\end{align*}
Note that $d$-splits of $X$ that restrict to splits of $Y$ are $d|_{Y \times Y}$-splits. Let $\widetilde{\mathcal{S}}_Y$ the set $\widetilde{\mathcal{S}}_Y \subset \widetilde{\mathcal{S}}$ of all splits in $\widetilde{\mathcal{S}}$ that restrict to a split of $Y$. Since $S_0 \subset \widetilde{\mathcal{S}}_Y$, $S_0$ is a $d$-split and it is the unique $d$-split of $X$ that restricts to $\{\{\widetilde{a},\widetilde{a}',a,a'\},\{\widetilde{b},\widetilde{b}',b,b'\}\}$ on $Y$, we can apply the Lemma~\ref{Lem:DecompositionInduction} with $\widetilde{d}_Y$ given by \eqref{eq:FormulaDYMinusFinite} to deduce that
\[
\alpha^{\widetilde{d}_Y|_{Y \times Y}}_{\{\{\widetilde{a},\widetilde{a}',a,a'\},\{\widetilde{b},\widetilde{b}',b,b'\}\}} 
=\alpha^{d|_{Y \times Y}}_{\{\{\widetilde{a},\widetilde{a}',a,a'\},\{\widetilde{b},\widetilde{b}',b,b'\}\}} - \lambda_{\{A_0,B_0\}}.
\]
Finally, the right-hand side is equal to 
\[
\alpha^d_{\{\{a,a'\},\{b,b'\}\}} - \lambda_{\{A_0,B_0\}} =
\alpha^d_{\{A_0,B_0\}} - \lambda_{\{A_0,B_0\}}.
\]
Hence, $\alpha^{\widetilde{d}}_{\{A_0,B_0\}} \ge \alpha^d_{\{A_0,B_0\}} - \lambda_{\{A_0,B_0\}} -\eps$. Since this holds for any $\eps >0$, we get $\alpha^{\widetilde{d}}_{\{A_0,B_0\}} \ge \alpha^d_{\{A_0,B_0\}} - \lambda_{\{A_0,B_0\}}$. The other inequality is obtained similarly, noting that
\begin{align*}
\alpha^{\widetilde{d}}_{\{A_0,B_0\}} 
\le \alpha^{\widetilde{d}}_{\{\{\widetilde{a},\widetilde{a}'\},\{\widetilde{b},\widetilde{b}'\}\}} 
&\le \alpha^{\widetilde{d}}_{\{\{\widetilde{a},\widetilde{a}',a,a'\},\{\widetilde{b},\widetilde{b}',b,b'\}\}} + \eps \\ 
&=\alpha^{\widetilde{d}_Y}_{\{\{\widetilde{a},\widetilde{a}',a,a'\},\{\widetilde{b},\widetilde{b}',b,b'\}\}} + \eps
\end{align*}

\item Assume that $S_0 \in \mathcal{S} \setminus \widetilde{\mathcal{S}}$. Let $Y$ and $\mathcal{S}_Y$ be defined as in the former case. Similarly to the former case, we have 
\begin{align*}
\alpha^{\widetilde{d}}_{\{A_0,B_0\}} 
\ge \alpha^{\widetilde{d}}_{\{\{\widetilde{a},\widetilde{a}'\},\{\widetilde{b},\widetilde{b}'\}\}} - \eps
&\ge \alpha^{\widetilde{d}}_{\{\{\widetilde{a},\widetilde{a}',a,a'\},\{\widetilde{b},\widetilde{b}',b,b'\}\}} - \eps \\
&=\alpha^{\widetilde{d}_Y}_{\{\{\widetilde{a},\widetilde{a}',a,a'\},\{\widetilde{b},\widetilde{b}',b,b'\}\}} - \eps.
\end{align*}
as well as
\begin{align*}
\alpha^{\widetilde{d}}_{\{A_0,B_0\}} 
\le \alpha^{\widetilde{d}}_{\{\{\widetilde{a},\widetilde{a}'\},\{\widetilde{b},\widetilde{b}'\}\}} 
&\le \alpha^{\widetilde{d}}_{\{\{\widetilde{a},\widetilde{a}',a,a'\},\{\widetilde{b},\widetilde{b}',b,b'\}\}} + \eps \\
&=\alpha^{\widetilde{d}_Y}_{\{\{\widetilde{a},\widetilde{a}',a,a'\},\{\widetilde{b},\widetilde{b}',b,b'\}\}} + \eps.
\end{align*}
Since $\widetilde{\mathcal{S}}_Y \subset \widetilde{\mathcal{S}}$, we have $S_0 \notin \widetilde{\mathcal{S}}_Y$ by assumption. Since $S_0$ is the unique $d$-split of $X$ that restricts to $\{\{\widetilde{a},\widetilde{a}',a,a'\},\{\widetilde{b},\widetilde{b}',b,b'\}\}$ on $Y$, it follows that no element of $\widetilde{\mathcal{S}}_Y$ restricts to $\{\{\widetilde{a},\widetilde{a}',a,a'\},\{\widetilde{b},\widetilde{b}',b,b'\}\}$ on $Y$. We can apply Lemma~\ref{Lem:DecompositionInduction} with $\widetilde{d}_Y$ given by \eqref{eq:FormulaDYMinusFinite} to obtain
\[
\alpha^{\widetilde{d}_Y|_{Y \times Y}}_{\{\{a,a',\bar{a},\bar{a}'\},\{b,b',\bar{b},\bar{b}'\}\}} =
\alpha^{d|_{Y \times Y}}_{\{\{\widetilde{a},\widetilde{a}',a,a'\},\{\widetilde{b},\widetilde{b}',b,b'\}\}}.
\]
The right-hand side is now equal to
\[
\alpha^d_{\{\{a,a'\},\{\bar{b},\bar{b}'\}\}} =
\alpha^d_{\{A_0,B_0\}}.
\]
Since $\eps>0$ can be taken arbitrarily small, we thus get $\alpha^{\widetilde{d}}_{\{A_0,B_0\}} = \alpha^d_{\{A_0,B_0\}}$.

\item Assume that $S_0 \notin \mathcal{S}$, let $Y$ and $\mathcal{S}_Y$ be defined as in the former case, we get
\begin{align*}
\alpha^{\widetilde{d}}_{\{A_0,B_0\}} 
\ge \alpha^{\widetilde{d}}_{\{\{\widetilde{a},\widetilde{a}'\},\{\widetilde{b},\widetilde{b}'\}\}} - \eps
&\ge \alpha^{\widetilde{d}}_{\{\{\widetilde{a},\widetilde{a}',a,a'\},\{\widetilde{b},\widetilde{b}',b,b'\}\}} - \eps \\
&=\alpha^{\widetilde{d}_Y}_{\{\{\widetilde{a},\widetilde{a}',a,a'\},\{\widetilde{b},\widetilde{b}',b,b'\}\}} - \eps.
\end{align*}
as well as
\begin{align*}
\alpha^{\widetilde{d}}_{\{A_0,B_0\}} 
\le \alpha^{\widetilde{d}}_{\{\{\widetilde{a},\widetilde{a}'\},\{\widetilde{b},\widetilde{b}'\}\}} 
&\le \alpha^{\widetilde{d}}_{\{\{\widetilde{a},\widetilde{a}',a,a'\},\{\widetilde{b},\widetilde{b}',b,b'\}\}} + \eps \\
&=\alpha^{\widetilde{d}_Y}_{\{\{\widetilde{a},\widetilde{a}',a,a'\},\{\widetilde{b},\widetilde{b}',b,b'\}\}} + \eps.
\end{align*}
Note that $S_0$ restricts to $\{\{\widetilde{a},\widetilde{a}',a,a'\},\{\widetilde{b},\widetilde{b}',b,b'\}\}$ on $Y$  and since 
\[
\alpha^{d}_{\{A_0,B_0\}} = \alpha^d_{\{\{a,a'\},\{b,b'\}\}}=0,
\]
it follows that $\{\{\widetilde{a},\widetilde{a}',a,a'\},\{\widetilde{b},\widetilde{b}',b,b'\}\}$ is not a $d|_{Y \times Y}$-split of $Y$. Hence by the finite case with $\widetilde{d}_Y$ given by \eqref{eq:FormulaDYMinusFinite}, it follows that
\begin{align*}
\alpha^{\widetilde{d}_Y|_{Y \times Y}}_{\{\{\widetilde{a},\widetilde{a}',a,a'\},\{\widetilde{b},\widetilde{b}',b,b'\}\}} 
= \alpha^{d|_{Y \times Y}}_{\{\{\widetilde{a},\widetilde{a}',a,a'\},\{\widetilde{b},\widetilde{b}',b,b'\}\}} 
&= \alpha^{d}_{\{\{\widetilde{a},\widetilde{a}',a,a'\},\{\widetilde{b},\widetilde{b}',b,b'\}\}} \\
&= \alpha^{d}_{\{A_0,B_0\}} \\
&= 0.
\end{align*}
Since $\eps>0$ can be taken arbitrarily small, we thus get $\alpha^{\widetilde{d}}_{\{A_0,B_0\}} = 0 = \alpha^d_{\{A_0,B_0\}}$.
\end{enumerate}
This concludes the proof.
\end{proof}

We say that a collection $\mathcal{S}$ of splits of $X$ is \textit{weakly compatible} if there are no four points $\{x_0,x_1,x_2,x_3\} \subset X$ and three splits $\{S_1,S_2,S_3\} \subset \mathcal{S}$ such that for any $i,j\in \{1,2,3\}$, one has
\[
S_i(x_0) = S_i(x_j) \Longleftrightarrow i = j.
\]

It is clear from \eqref{eq:AlphaBeta} that for a pseudometric space $(X,d)$ and every set of four different points $\{x_0,x_1,x_2,x_3\} \subset X$ at least one of 
\[
\alpha_{ \{ \{x_0,x_1\}, \{x_2,x_3\} \} } , \ \alpha_{ \{ \{x_0,x_2\}, \{x_1,x_3\} \} } \text{ and } \alpha_{ \{ \{x_0,x_3\}, \{x_1,x_2\} \} }
\]
is equal to zero. From Theorem \ref{Thm:thm1}, it thus follows that the $d$-splits with respect to any integer-valued pseudometric $d$ on $X$ are weakly compatible. Now, for pseudometric spaces with integer-valued pseudometric, the following holds:

\begin{Thm}\label{Thm:thm3} 
The $d$-splits with respect to any integer-valued pseudometric $d$ on a set $X$ are weakly compatible. Conversely, let $\mathcal{S}_0$ be any collection of weakly compatible splits of $X$. For each $S \in \mathcal{S}_0$, choose some $\lambda_S \in (0,\infty)$ such that
\[
d := \sum_{S \in \mathcal{S}_0} \lambda_S \delta_S \colon X \times X \to \mathbb{Z} \cap [0,\infty).
\]
Then, $\mathcal{S}_0$ is the set of all $d$-splits and for each $S \in \mathcal{S}_0$, the isolation index $\alpha_S= \alpha_S^d$ equals $\lambda_S$.
\end{Thm}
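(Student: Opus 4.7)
The first assertion of the theorem is already established in the paragraph preceding the statement via Theorem~\ref{Thm:thm1}, so the content lies in the converse. The plan is two-step: first, verify by a direct computation of $\beta^d$ that every $S \in \mathcal{S}_0$ is a $d$-split with $\alpha^d_S \ge \lambda_S$; second, invoke Theorem~\ref{Thm:thm2} with the given weights (extended by zero to the remaining splits) to upgrade this to both $\alpha^d_S = \lambda_S$ and $\mathcal{S} = \mathcal{S}_0$ in one stroke.

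For the first step, fix $S = \{A,B\} \in \mathcal{S}_0$ together with $a,a' \in A$ and $b,b' \in B$, and classify each $T \in \mathcal{S}_0$ by its restriction to $\{a,a',b,b'\}$. Write $\ell_x, \ell_y, \ell_z$ for the total $\lambda_T$-weight of those splits whose restriction is respectively $x := \{\{a,a'\},\{b,b'\}\}$, $y := \{\{a,b\},\{a',b'\}\}$ and $z := \{\{a,b'\},\{a',b\}\}$. Expanding $d(p,q) = \sum_T \lambda_T \delta_T(p,q)$ in the three cross-sums entering the definition of $\beta^d$, the contributions coming from the $3+1$ restrictions cancel out, and a routine bookkeeping gives
\[
\beta^d_{\{\{a,a'\},\{b,b'\}\}} = \ell_x - \min\{\ell_x,\ell_y,\ell_z\}.
\]
Since $S$ itself restricts to $x$, one has $\ell_x \ge \lambda_S > 0$; weak compatibility of $\mathcal{S}_0$ applied to $\{a,a',b,b'\}$ prevents all three $2+2$ partitions from being realised simultaneously and thus forces $\min\{\ell_x,\ell_y,\ell_z\} = 0$, so $\beta^d_{\{\{a,a'\},\{b,b'\}\}} = \ell_x \ge \lambda_S$. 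The degenerate configurations ($a=a'$ or $b=b'$) are treated by the analogous bookkeeping on the corresponding $3$- or $2$-point set and yield $\beta^d = \ell_x \ge \lambda_S$ directly, with no use of weak compatibility. Taking the infimum over $a,a',b,b'$, one obtains $\alpha^d_S \ge \lambda_S > 0$, so $\mathcal{S}_0 \subset \mathcal{S}$.

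For the second step, set $\mu_S := \lambda_S$ for $S \in \mathcal{S}_0$ and $\mu_S := 0$ otherwise. The first step secures $\mu_S \in (-\infty, \alpha^d_S]$ for every $S \in \mathcal{S}$, so Theorem~\ref{Thm:thm2} applies and produces the pseudometric
\[
\widetilde{d} = d - \sum_{S \in \mathcal{S}_0} \lambda_S \delta_S = 0,
\]
with $\alpha^{\widetilde{d}}_S = \alpha^d_S - \mu_S$ for every split $S \in \mathcal{S}_X$. Since $\alpha^{\widetilde{d}}_S = 0$ identically, we obtain $\alpha^d_S = \mu_S$ for every split: in particular $\alpha^d_S = \lambda_S$ for $S \in \mathcal{S}_0$, and $\alpha^d_S = 0$ for any split outside $\mathcal{S}_0$, forcing $\mathcal{S} = \mathcal{S}_0$.

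The only step requiring genuine care is the derivation of the identity $\beta^d = \ell_x - \min\{\ell_x,\ell_y,\ell_z\}$ (together with its degenerate analogues); once this is in hand, a single application of Theorem~\ref{Thm:thm2} packages everything cleanly.
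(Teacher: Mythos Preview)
Your proof is correct and follows essentially the same approach as the paper: first show $\mathcal{S}_0 \subset \mathcal{S}$ with $\alpha^d_S \ge \lambda_S$ by computing $\beta^d$ on quadruples and invoking weak compatibility, then apply Theorem~\ref{Thm:thm2} to finish. The organizational differences are minor: you derive the general identity $\beta^d_{\{\{a,a'\},\{b,b'\}\}} = \ell_x - \min\{\ell_x,\ell_y,\ell_z\}$ for arbitrary quadruples before using weak compatibility, whereas the paper first picks a quadruple realising the infimum and uses weak compatibility up front to kill one of the $\ell$'s; and in the second step you subtract $\sum_{\mathcal{S}_0}\lambda_S\delta_S$ and read off $\alpha^{\widetilde d}_S=0$, while the paper uses the decomposition $d = d_0 + \sum_{\mathcal{S}}\alpha^d_S\delta_S$ and a sandwich inequality.
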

\begin{proof}

Let $S:= \{A,B\} \in \mathcal{S}_0$. Pick $x,y \in A$ and $z,w \in B$ such that 
\[
\alpha^d_{ \{ \{x,y\}, \{z,w\} \} } = \alpha^d_{ \{ A,B \} }.
\]
By weak compatibility of $\mathcal{S}_0$, we can assume that there is no split in $\mathcal{S}_0$ extending (without loss of generality) $\{ \{x,w\}, \{y,z\} \}$. Let 
\begin{align*}
\mathcal{S}_1 &:= \{ S \in \mathcal{S}_0: S \text{ extends } \{ \{x,y\}, \{z,w\} \} \}, \\
\mathcal{S}_2 &:= \{ S \in \mathcal{S}_0: S \text{ extends } \{ \{x,z\}, \{y,w\} \} \},
\end{align*}
noting that $S \subset \mathcal{S}_1$. All splits in $\mathcal{S}_0 \setminus ( \mathcal{S}_1 \cup \mathcal{S}_2)$ equally contribute to each of the three distance sums involving $x,y,z,w$ in $\beta^d_{ \{ \{x,y\}, \{z,w\} \} } $, so that by \eqref{eq:AlphaBeta}, we get
\begin{align*}
\alpha^d_{ \{ \{x,y\}, \{z,w\} \} } 
= \beta^d_{ \{ \{x,y\}, \{z,w\} \} } 
&= \frac{1}{2} \left( \max \{ xz + yw, xw + yz \} - xy - zw \right)\\
&= \max \left \{ \sum_{S \in \mathcal{S}_1} \lambda_S , \sum_{S \in \mathcal{S}_1 \cup \mathcal{S}_2} \lambda_S \right \} - \sum_{S \in \mathcal{S}_1} \lambda_S \\
&= \sum_{S \in \mathcal{S}_1} \lambda_S \\
&\ge \lambda_{\{A,B\}} \\
&> 0.
\end{align*}
Therefore, $\{A,B\}$ is a $d$-split. Let us denote by $\mathcal{S}$ the set of all $d$-splits of $X$, we have just proved that $\mathcal{S}_0 \subset \mathcal{S}$. We can decompose $d$ according to Theorem~\ref{Thm:thm2} to obtain
\[
d = d_0 + \sum_{S \in \mathcal{S}} \alpha^d_S \delta_S
\ge \sum_{S \in \mathcal{S}_0} \alpha^d_S \delta_S
\ge \sum_{S \in \mathcal{S}_0} \lambda_S \delta_S
= d,
\]
which implies that equality holds throughout. This finally yields $d_0 \equiv 0$ as well as $\mathcal{S}_0 = \mathcal{S}$. Furthermore, for each $S \in \mathcal{S}$, one has $\alpha^d_S = \lambda_S$ and for each $S \notin \mathcal{S}$, one has $\alpha^d_S = 0$. This concludes the result.
\end{proof}

In the next Lemma, we denote by $\Sigma^0(\mathrm{E}(X,d))$ the set of vertices of the polyhedral complex $\mathrm{E}(X,d)$. Equivalently, it is the set of all functions $f \in \mathrm{E}(X,d)$ such that $\mathrm{rank}(A(f))=0$.

\begin{Lem}
Let $(X,d)$ be a metric space with integer-valued metric satisfying the (LRC) and let $f \in \Sigma^0(\mathrm{E}(X,d))$. Let $\mathcal{S}_{< \infty}$ be any finite subset of the set $\mathcal{S}$ of all $d$-splits of $X$. If for every $S \in \mathcal{S}_{< \infty}$, one picks $\lambda_S \in (-\infty,\alpha^d_S]$, then there are functions $f_S \in \mathrm{E}(X,\delta_S)$ such that
\[
f - \sum_{S \in \mathcal{S}_{< \infty}} \lambda_S f_S \in \Delta \left(X, d - \sum_{S \in \mathcal{S}_{< \infty}} \lambda_S \delta_S \right).
\]
\end{Lem}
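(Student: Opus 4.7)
The plan is to parametrize each $f_S$ by a single scalar in $[0,1]$ and then to prove existence by induction on $|\mathcal{S}_{<\infty}|$. Writing $S = \{A_S, B_S\}$, the extremality condition $f_S(x) = \sup_y(\delta_S(x,y) - f_S(y))$ forces $f_S$ to be constant on $A_S$ and on $B_S$, and a short computation gives
\[
\mathrm{E}(X,\delta_S) = \{ a_S \chi_{A_S} + (1 - a_S) \chi_{B_S} : a_S \in [0,1]\},
\]
where $\chi$ denotes the indicator function. With this parametrization, the condition $f - \sum_S \lambda_S f_S \in \Delta(X, \widetilde{d})$ (with $\widetilde{d} = d - \sum_S \lambda_S \delta_S$) unfolds pairwise: for $x, y$ separated by every $S \in \mathcal{S}_{<\infty}$ it becomes the trivial $0 \le f(x) + f(y) - d(x, y)$, while in general it reads
\[
\sum_{S : x, y \in A_S} 2 \lambda_S a_S + \sum_{S : x, y \in B_S} 2 \lambda_S (1 - a_S) \le f(x) + f(y) - d(x, y).
\]
Splits with $\lambda_S \le 0$ contribute non-positive terms and impose no real constraint; only those with $\lambda_S > 0$ are active.

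The base case $|\mathcal{S}_{<\infty}| = 1$ is the heart of the argument. Fix $S = \{A_S, B_S\}$ and $\lambda_S \in (0, \alpha_S^d]$; the system decouples into $2\lambda_S a_S \le f(x) + f(y) - d(x, y)$ for $x, y \in A_S$ and $2\lambda_S(1 - a_S) \le f(z) + f(w) - d(z, w)$ for $z, w \in B_S$. A valid $a_S \in [0, 1]$ exists provided
\[
\inf_{x, y \in A_S}\bigl(f(x) + f(y) - d(x, y)\bigr) + \inf_{z, w \in B_S}\bigl(f(z) + f(w) - d(z, w)\bigr) \ge 2 \lambda_S,
\]
and the key estimate is that $f \in \Delta(X, d)$ implies $f(x) + f(y) + f(z) + f(w) \ge \max\{d(x,y) + d(z,w),\, d(x, z) + d(y, w),\, d(x, w) + d(y, z)\}$ for all $x, y, z, w \in X$, whence
\[
\bigl(f(x) + f(y) - d(x, y)\bigr) + \bigl(f(z) + f(w) - d(z, w)\bigr) \ge 2 \beta^d_{\{\{x, y\}, \{z, w\}\}}.
\]
Taking the infimum over $x, y \in A_S$ and $z, w \in B_S$ yields $\ge 2\alpha_S^d \ge 2\lambda_S$. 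Note that only $f \in \Delta(X, d)$ is used; the vertex hypothesis is in fact not needed.

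For the induction step, pick any $S_0 \in \mathcal{S}_{<\infty}$, construct $f_{S_0}$ via the base case, and set $g := f - \lambda_{S_0} f_{S_0} \in \Delta(X, d - \lambda_{S_0} \delta_{S_0})$. By Theorem~\ref{Thm:thm2} applied to the integer-valued $d$, one has $\alpha_S^{d - \lambda_{S_0} \delta_{S_0}} = \alpha_S^d$ for every $S \neq S_0$, so each remaining $S \in \mathcal{S}_{<\infty} \setminus \{S_0\}$ is still a split of positive isolation index with $\lambda_S \le \alpha_S^{d - \lambda_{S_0}\delta_{S_0}}$; iterating produces the required family $(f_S)$. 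The main technical content is the key estimate of the base case; a small additional subtlety is that the intermediate pseudometrics $d - \sum_{i \le k} \lambda_{S_i} \delta_{S_i}$ arising during the induction are in general no longer integer-valued, but the single-split construction uses only the triangle inequality for the current function and the bound $\lambda_{S_{k+1}} \le \alpha_{S_{k+1}}^{d_k}$ (supplied once and for all by Theorem~\ref{Thm:thm2} applied to the original integer-valued $d$), so it goes through unchanged.
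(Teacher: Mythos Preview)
Your proof is correct and takes a genuinely different route from the paper's.

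The paper leans on the vertex hypothesis $f\in\Sigma^0(\mathrm{E}(X,d))$ in an essential way: since $(X,A(f))$ has rank $0$, it is not bipartite, so for each split $S=\{A,B\}$ there is an edge $\{a,a'\}\in A(f)$ lying entirely in one side, say $A$. This forces $M_A:=\inf_{x,y\in A}(f(x)+f(y)-d(x,y))=0$, and the paper then \emph{chooses} the endpoint $a_S=0$, i.e.\ $f_S=\chi_B$. The verification for $x,y\in B$ uses this specific tight pair $\{a,a'\}$. For the induction the paper observes $A^d(f)\subset A^{d^{(1)}}(f^{(1)})$, so $f^{(1)}$ is again a rank-$0$ vertex and the argument repeats.

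You instead prove the base case for arbitrary $f\in\Delta(X,d)$ via the clean inequality
\[
\bigl(f(x)+f(y)-d(x,y)\bigr)+\bigl(f(z)+f(w)-d(z,w)\bigr)\;\ge\;2\,\beta^d_{\{\{x,y\},\{z,w\}\}},
\]
whose infimum over $x,y\in A_S$, $z,w\in B_S$ gives $M_A+M_B\ge 2\alpha_S^d\ge 2\lambda_S$, exactly the compatibility needed for a valid $a_S\in[0,1]$. This is more general and arguably more transparent; as you note, the vertex hypothesis is not used at all. The induction then only needs $\alpha_S^{d_k}=\alpha_S^d$ for the unprocessed splits, which Lemma~\ref{Lem:DecompositionInduction} (the finite case of Theorem~\ref{Thm:thm2}) supplies when applied once to the original integer-valued $d$.

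What each approach buys: the paper's choice yields $f_S\in\{0,1\}^X$, a feature that is genuinely needed downstream in Theorem~\ref{Thm:thm4}, where one must show $\sum_{S\in\mathcal{S}}\lambda_S f_S(x)<\infty$ for possibly infinite $\mathcal{S}$; this relies on $f_S(x)=0$ for all but finitely many $S$, which follows from the indicator form together with the edge structure of $A(f)$. Your $a_S$ may lie strictly in $(0,1)$, so your construction does not immediately feed into that argument. For the lemma as stated (finite $\mathcal{S}_{<\infty}$), however, your proof is complete and in fact establishes a stronger statement.
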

\begin{proof}
Let $S :=\{A,B\} \in \mathcal{S}_{< \infty}$ where $\mathcal{S}_{< \infty}$ is any finite subset $\mathcal{S}$. Since $\mathrm{rank}(A(f))=0$ and thus $A(f)$ is in particular not bipartite, there are $a,a' \in X$ such that $\{a,a'\} \in A(f)$ and either $a,a' \in A$ or $a,a' \in B$. Assume without loss of generality that $a,a' \in A$. Note that if there are $b,b' \in B$ such that one has $\{b,b'\} \in A(f)$, then 
\begin{align*}
\max \{d(a,b)+d(a',b'),d(a,b')+d(a',b)\} 
&\le f(a) + f(a') + f(b) + f(b') \\
&= d(a,a') + d(b,b'),
\end{align*}
and thus $\alpha^d_S =0$, which contradicts our assumption. Hence for any $b,b' \in B$, one has $\{b,b'\} \notin A(f)$. We set
\[
f_S(x) \; = \; \left\lbrace{\begin{matrix} 
0 &\text{if }  x \in A, \\
1 &\text{if } x \in B, \end{matrix}}\right.
\]
We show that $f^{(1)}:= f - \lambda_S f_S$ and $d^{(1)} := d -\lambda_S f_S$ satisfy $f^{(1)} \in \Delta \left(X, d^{(1)} \right)$. We denote distances $d(x,y)$ simply by $xy$ and we distinguish three cases:
\begin{enumerate}[$(a)$]
\item  if $x,y \in A$, then
\[
f^{(1)}(x) + f^{(1)}(y) = f(x) + f(y)\ge xy = d^{(1)}(x,y),
\]
\item if $x \in A$ and $y \in B$, then
\[
f^{(1)}(x) + f^{(1)}(y) = f(x) + f(y) - \lambda_S \ge  xy - \lambda_S  = d^{(1)}(x,y),
\]
\item if $x,y \in B$, there are $ \{a,a'\} \subset A$ such that 
\[
f(a) + f(a') - aa' =0
\]
hence
\begin{align*}
f^{(1)}(x) + f^{(1)}(y) 
&= f(x) + f(y) - 2 \lambda_S \\
&\ge \left[ f(x) + f(y) - 2 \lambda_S \right] + \left[ f(a) + f(a') - aa'  \right]\\
&\ge \max \left\{ xa + ya', xa' + ya\right\} -aa' -2 \lambda_S\\
&\ge xy .
\end{align*}
\end{enumerate}
Hence $f^{(1)} \in \Delta \left(X, d^{(1)} \right)$ as desired. Now, note that if $\{x,y\} \in A^d(f)$, then either $x,y \in A$ or $x \in A$ and $y \in B$. In both cases one sees that equalities hold in $(a)$ and $(b)$ above and thus that $\{x,y\} \in A^{d^{(1)}}(f^{(1)})$, i.e. 
\begin{equation}\label{eq:InclusionAdmissibleGraphs}
A^d(f) \subset A^{d^{(1)}}(f^{(1)})
\end{equation}
hence in particular $f \in \Sigma^0(\mathrm{E}(X,d^{(1)}))$. For $S':=\{A',B'\} \in \mathcal{S} \setminus \{S\}$, we can proceed as above finding $c,c' \in A'$ such that $\{c,c'\} \in A^{d^{(1)}}(f^{(1)})$. By Theorem~\ref{Thm:thm2}, it follows that $\alpha^{d^{(1)}}_{S'} = \alpha^{d}_{S'} > 0$ which implies that for $e,e' \in B'$, one has $\{e,e'\} \notin A^{d^{(1)}}(f^{(1)})$. We set
\[
f^{(1)}_{S'}(x) \; = \; \left\lbrace{\begin{matrix} 
0 &\text{if }  x \in A', \\
1 &\text{if } x \in B', \end{matrix}}\right.
\]
and as above in $(a)-(c)$, we obtain $f^{(1)} - \lambda_{S'} f^{(1)}_{S'} \in \Delta \left(X, d^{(1)} - \lambda_{S'} \delta_{S'} \right)$. By \eqref{eq:InclusionAdmissibleGraphs}, it follows that $f_{S'}=f^{(1)}_{S'}$. Hence, we obtain 
\[
f-\lambda_S f_S -\lambda_{S'} f_{S'} 
= f^{(1)} - \lambda_{S'} f^{(1)}_{S'} 
\in \Delta \left(X, d^{(1)} - \lambda_{S'} \delta_{S'}\right)
= \Delta \left(X, d^{(2)} \right)
\]
where $d^{(2)} = d  - \lambda_S \delta_S - \lambda_{S'} \delta_{S'}$. By induction, we now get the desired result.
\end{proof}

Now,

\begin{Lem}\label{Lem:FirstPartDecompositionDelta}
Let $(X,d)$ be a metric space with integer-valued metric satisfying the (LRC). For any split $S=\{A,B\}$ of $X$ and any $\lambda_S>0$ such that $d = d_1 + \lambda_S \delta_{S}$, if $\Delta(X,d) = \Delta(X,d_1) + \lambda_S \Delta(X,\delta_{S})$, then the following hold:
\begin{enumerate}[$(i)$]
\item $S$ is a $d$-split of $X$ and
\item $\lambda_S \le \alpha_S$.
\end{enumerate}
\end{Lem}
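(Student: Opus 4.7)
The plan is to prove the stronger statement $\alpha^d_S \ge \lambda_S$, which subsumes both $(i)$ and $(ii)$ since $\lambda_S > 0$. I would first establish that $d_1 := d - \lambda_S \delta_S$ is itself a pseudometric, then argue by contradiction via a four-point analysis using the extension property of injective hulls.

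For the first part, fix $x \in X$; the function $d_x := d(x, \cdot)$ lies in $\Delta(X,d)$, so by hypothesis $d_x = f_1 + \lambda_S f_2$ with $f_1 \in \Delta(X, d_1)$ and $f_2 \in \Delta(X, \delta_S)$. From $f_i(x) + f_i(x) \ge 0$ together with $f_1(x) + \lambda_S f_2(x) = d_x(x) = 0$ I get $f_1(x) = f_2(x) = 0$. Then $f_1(y) \ge d_1(x,y)$ and $f_2(y) \ge \delta_S(x,y)$ for all $y \in X$, and since $f_1(y) + \lambda_S f_2(y) = d(x,y) = d_1(x,y) + \lambda_S \delta_S(x,y)$, both inequalities are forced to be equalities; in particular $f_1(y) = d_1(x, y)$. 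The condition $f_1 \in \Delta(X, d_1)$ then gives $d_1(x,y) + d_1(x,z) \ge d_1(y,z)$ for all $y, z \in X$, which is the triangle inequality for $d_1$; symmetry and non-negativity follow at once.

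For the second part, suppose $\alpha^d_S < \lambda_S$ and pick a witnessing quadruple $a, a' \in A$, $b, b' \in B$ with $\beta^d_{\{\{a,a'\},\{b,b'\}\}} < \lambda_S$. Writing $T_1, T_2, T_3$ for the three pair-sums in $d_1$ (with $T_3$ the same-side one), the identity $d = d_1 + \lambda_S \delta_S$ translates the corresponding $d$-sums into $T_1 + 2\lambda_S$, $T_2 + 2\lambda_S$, $T_3$, and the assumption unfolds to $\max(T_1, T_2) < T_3$. On $Y := \{a, a', b, b'\}$ I would construct an $f_0 \in \Delta(Y, d|_Y)$ with three specific tight pairs so that every vertex of $Y$ participates in at least one: if $\beta^d_{\{\{a,a'\},\{b,b'\}\}} = 0$ (equivalently $T_3^d \ge \max(T_1^d, T_2^d)$), take the pairs $\{a,a'\}, \{b,b'\}, \{a,b\}$; otherwise, WLOG $T_1^d \ge T_2^d$, take the three mixed pairs $\{a,b\}, \{a',b'\}, \{a,b'\}$. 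Feasibility of each linear system in $\Delta(Y, d|_Y)$ reduces to a short check using the relevant inequalities among the $T_i^d$. Since every vertex of $Y$ has a tight partner, $f_0 \in \mathrm{E}(Y, d|_Y)$, and by the universal property of injective hulls there is an induced isometric embedding $\mathrm{E}(Y, d|_Y) \hookrightarrow \mathrm{E}(X, d)$ yielding $\tilde f \in \mathrm{E}(X, d) \subset \Delta(X, d)$ with $\tilde f|_Y = f_0$.

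Applying the hypothesis, $\tilde f = F_1 + \lambda_S F_2$ with $F_1 \in \Delta(X, d_1)$ and $F_2 \in \Delta(X, \delta_S)$, and restricting to $Y$ yields $f_0 = F_1|_Y + \lambda_S F_2|_Y$ where $F_1|_Y \in \Delta(Y, d_1|_Y)$ and $F_2|_Y \in \Delta(Y, \delta_{S_Y})$ for $S_Y := \{\{a,a'\},\{b,b'\}\}$. In the first case, tightness of the two same-side pairs forces $F_2$ to vanish on all of $Y$, contradicting the requirement $F_2(a) + F_2(b) = 1$ coming from the tight mixed pair. In the second case, the three tight mixed pairs determine $F_1$ up to a single parameter, and adding the bounds $F_1(a) + F_1(a') \ge d_1(a,a')$ and $F_1(b) + F_1(b') \ge d_1(b,b')$ yields $T_1 \ge T_3$, i.e.\ $T_1^d - T_3^d \ge 2\lambda_S$, so $\beta^d_{\{\{a,a'\},\{b,b'\}\}} = (T_1^d - T_3^d)/2 \ge \lambda_S$, contradicting the choice of the quadruple. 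I expect the main technical hurdle to be the case-by-case selection of tight pairs so that simultaneously $f_0$ is extremal on $Y$ (permitting the canonical extension) and the induced constraints on $F_1, F_2$ are sharp enough to yield the contradiction.
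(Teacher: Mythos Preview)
Your argument is correct. Both proofs reduce to a four–point set $Y=\{a,a',b,b'\}$ and exploit a decomposition $f_0=F_1+\lambda_S F_2$ there, but they differ in how they get to $Y$ and in how the witness function is built. The paper first observes that the hypothesis $\Delta(X,d)=\Delta(X,d_1)+\lambda_S\Delta(X,\delta_S)$ restricts to $Y$ (via the trivial extension $\hat f(x)=\inf_{y\in Y}(d(x,y)+f(y))$), and then writes down a \emph{single} explicit extremal function on $Y$ with tight pairs $\{a,a'\},\{a,b\},\{a',b\}$ and one further pair through $b'$; from these it reads off $f_S(a)=f_S(a')=0$, $f_S(b)=f_S(b')=1$ and concludes $\beta^d_{\{\{a,a'\},\{b,b'\}\}}\ge\lambda_S$ directly, with no case split and no contradiction. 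Your route instead lifts $f_0\in\mathrm E(Y,d|_Y)$ to $\tilde f\in\mathrm E(X,d)$ via injectivity of $\mathrm E(X,d)$ (which does give $\tilde f|_Y=f_0$, since $\tilde f(y)=d_\infty(\tilde f,d_y)=d_\infty(f_0,d_y|_Y)=f_0(y)$), applies the hypothesis on $X$, and then restricts; this is a legitimate alternative to the paper's restriction-of-$\Delta$ step. Two minor points: your preliminary verification that $d_1$ is a pseudometric is correct but plays no role in the rest of the argument; and in your Case~1 the contradiction already follows from $F_2|_Y\in\Delta(Y,\delta_{S_Y})$ forcing $F_2(a)+F_2(b)\ge 1$, so the third tight pair $\{a,b\}$ is not needed for the contradiction (it is only there so that all four vertices have tight partners, but $\{a,a'\}$ and $\{b,b'\}$ already accomplish that). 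The paper's single-function approach is shorter and avoids your case distinction, while your use of the $\mathrm E(Y)\hookrightarrow\mathrm E(X)$ embedding is a clean conceptual substitute for the paper's unproved ``$\Delta$ restricts to $Y$'' assertion.
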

\begin{proof}
Assume that there is a split $d$-split $S=\{A,B\}$ of $X$ and $\lambda_S>0$ such that $d = d_1 + \lambda_S \delta_{S}$, if $\Delta(X,d) = \Delta(X,d_1) + \lambda_S \Delta(X,\delta_{S})$. We show that for any $A_0 := \{a,a'\}$ and $B_0 :=\{b,b'\}$ in $X$ so that $A_0 \cap B_0 = \emptyset$ (and writing simply $xy$ instead of $d(x,y)$), one has
\[
\frac{1}{2} \Bigl( \max\{ab + a'b', a'b + ab'\} -aa' - bb' \Bigr) \ge \lambda.
\]  
Note that by using that for any $Y \subset X$ and any split $S$ of $X$, one has
\[
\Delta(d|_{Y \times Y}) = \Delta(d_1|_{Y \times Y}) + \lambda_S \Delta(\delta_S |_{Y \times Y})
\]
and thus in particular for $Y := A_0 \cup B_0$ and the split $S := \{A_0,B_0\}$ of $Y$. Define now the map $f \colon Y \to \R$ as follows:
\begin{align*}
f(a) &= \frac{1}{2} (aa' + ab -a'b), \\
f(a') &= \frac{1}{2} (aa' + a'b -ab), \\
f(b) &= \frac{1}{2} (ab + a'b -aa'), \\
f(b') &= \max \{ ab' - f(a), a'b' - f(a'),bb' - f(b) \}.
\end{align*}
It is then easy to see that
\begin{align*}
f(a) +f(a') &= aa', \\
f(a) +f(b) &= ab, \\
f(a') +f(b) &= a'b.
\end{align*}
Furthermore, $f(a)$, $f(a')$ and $f(b)$ are clearly non-negative. From
\[
ab' - f(a) + a'b' - f(a') = ab' + a'b' -aa' \ge 0
\]
we deduce that $f(b') \ge 0$. Therefore, $f \in \mathrm{E}(Y,d) \subset \Delta(Y,d)$. By assumption, there exist $f_1 \in \Delta(Y,d_1)$ and $f_S \in \Delta(Y,\delta_S)$ such that
\[
f=f_1 + \lambda_S f_S.
\] 
Since $\delta_S(a,a')=0$, we have
\[
d_1(a,a') = aa' = f(a) + f(a') \ge f_1(a) + f_1(a')
\]
hence $f_1(a) + f_1(a') = aa'$ and thus $f_S(a) = f_S(a') = 0$ which implies 
\[
f_S(b) \ge 1 \ \ \text{ and } \ \ f_S(b') \ge 1.
\]
Moreover, we have
\begin{align*}
f_1(a) + f_1(b) + \lambda_S f_S(b) = f(a) + f(b) = ab &= d_1(a,b) + \lambda_S \\
& \le  f_1(a) + f_1(b) + \lambda_S.
\end{align*}
Therefore 
\[
f_S(b) =1.
\]
Similarly, 
\begin{align*}
f_1(a') + f_1(b) + \lambda_S f_S(b) = f(a') + f(b) = a'b &= d_1(a',b) + \lambda_S \\
& \le  f_1(a') + f_1(b) + \lambda_S,
\end{align*}
and thus
\[
f(a') + f(b) = f_1(a') + f_1(b) + \lambda_S = a'b.
\]
Observe that since $\lambda_S f_S(b) = \lambda_S >0$, one has 
\[
f(b) + f(b') > f_1(b) + f_1(b') \ge d_1(b,b')= bb'.
\]
Since we may interchange the role of $a$ and $a'$ in the following, we can assume that $f(a) + f(b') = ab'$ since $f \in \mathrm{E}(Y,d)$. Hence
\begin{align*}
f_1(a) + f_1(b') + \lambda_S f_S(b) = f(a) + f(b') = ab' &= d_1(a,b') + \lambda_S \\
& \le  f_1(a) + f_1(b') + \lambda_S.
\end{align*}
and therefore
\[
f_S(b') =1.
\]
Since 
\[
f_1(b) + f_1(b') \ge d_1(b,b') = bb',
\]
we finally obtain
\begin{align*}
&\frac{1}{2} \Bigl( \max\{ab + a'b', a'b + ab'\} -aa' - bb' \Bigr) \\
&\ge \frac{1}{2} \Bigl( a'b + ab' -aa' - bb' \Bigr) \\
&= \frac{1}{2} \Bigl( f(a') + f(b) + f(a) + f(b')- f(a) - f(a') -bb'\Bigr) \\
&= \frac{1}{2} \Bigl( f_1(a') + f_1(b) + \lambda + f_1(a) + f_1(b') + \lambda -f_1(a) - f_1(a') -bb'\Bigr) \\
&= \frac{1}{2} \Bigl( f_1(b) + f_1(b') -bb' \Bigr) + \lambda \\
&\ge \lambda,
\end{align*}
i.e. $\alpha^d_S \ge \lambda$ and this is the desired result.
\end{proof}

Note that 
\[
\Delta(X,d_0) + \sum_{S \in \mathcal{S}} \alpha_S \Delta(X,\delta_S) \subset [0,\infty]^X,
\]
and since $\mathcal{S}$ is possibly infinite, we cannot replace $[0,\infty]^X$ by $\R^X$ in general.

\begin{Thm}\label{Thm:thm4}
Let $(X,d)$ be a metric space with integer-valued metric satisfying the (LRC). Let $\mathcal{S}$ be the family of all $d$-splits of $X$ and let $\lambda_S \in (0,\alpha_S]$ for every $S \in \mathcal{S}$ such that $d = d_0 + \sum_{S \in \mathcal{S}} \alpha_S \delta_S$ and $d_1 := d - \sum_{S \in \mathcal{S}} \lambda_S \delta_S$.
Then
\[
\Delta(X,d) = \R^X \cap \left( \Delta(X,d_1) + \sum_{S \in \mathcal{S}} \lambda_S \Delta(X,\delta_S) \right).
\]
\end{Thm}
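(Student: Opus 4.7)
The plan is to prove the two inclusions separately. The inclusion $\supset$ is immediate: for any $f = f_1 + \sum_{S \in \mathcal{S}} \lambda_S f_S \in \R^X$ with $f_1 \in \Delta(X,d_1)$ and $f_S \in \Delta(X,\delta_S)$, each $f_S$ is non-negative (set $x=y$ in the defining inequality), so the sum is a finite non-negative quantity at each point, and summing the inequalities $f_1(x) + f_1(y) \geq d_1(x,y)$ with $\lambda_S(f_S(x) + f_S(y)) \geq \lambda_S \delta_S(x,y)$ over all $S$ yields $f \in \Delta(X,d)$.

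For the reverse inclusion, my plan is a three-step reduction from an arbitrary $f \in \Delta(X,d)$ down to the case of a vertex of $\mathrm{E}(X,d)$, where the preceding lemma applies. First, I would pick $g \in \mathrm{E}(X,d)$ with $g \leq f$ by a Zorn-type minimization: elements $g \leq f$ in $\Delta(X,d)$ are automatically bounded below pointwise (since $g(y) \geq d(x,y) - g(x) \geq d(x,y) - f(x)$), so pointwise infima of decreasing chains stay in $\Delta(X,d)$; minimality forces extremality, and by the (LRC), $\mathrm{E}(X,d) = \mathrm{E}'(X,d)$. The remainder $h := f - g \geq 0$ can be absorbed into the $\Delta(X,d_1)$-component at the end, since adding a non-negative function preserves membership in $\Delta(X,d_1)$. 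Second, by (LRC), $g$ lies in a cell $P(A(g))$ which is a polytope: the affine hull $H(A(g))$ is finite-dimensional, and the inequalities $g'(x) + g'(y) \geq d(x,y)$ together with extremality (forcing $g' \geq 0$ via $\mathrm{E}(X,d) \sub \Delta_1(X,d)$) bound the single degree of freedom on each even $A(g)$-component. Thus $g = \sum_i t_i v_i$ is a convex combination of finitely many vertices $v_i \in \Sigma^0(\mathrm{E}(X,d))$ of that cell, and convexity of $\Delta(X,d_1)$ and of each $\Delta(X,\delta_S)$ reduces the problem to decomposing each vertex.

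Third, for a vertex $v$, the preceding lemma, applied to any finite subset $\mathcal{S}_{<\infty} \subset \mathcal{S}$, produces for each $d$-split $S$ a function $v^{(S)} \in \mathrm{E}(X,\delta_S)$ taking the value $0$ on the side of $S$ containing an $A(v)$-edge and $1$ on the other side; this choice is forced by $\alpha^d_S > 0$ and $\mathrm{rank}(A(v)) = 0$, so $v^{(S)}$ depends only on $v$ and $S$. Since $v^{(S)} \geq 0$, evaluating the lemma's conclusion at $y = x$ gives $\sum_{S \in \mathcal{S}_{<\infty}} \lambda_S v^{(S)}(x) \leq v(x) < \infty$, so $V(x) := \sum_{S \in \mathcal{S}} \lambda_S v^{(S)}(x)$ converges for every $x$, and I set $v^{(1)} := v - V \in \R^X$. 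By Lemma~\ref{Lem:FinitelyManySeparatingSplits}, for each pair $x,y \in X$ the quantity $\sum_{S \in \mathcal{S}} \lambda_S \delta_S(x,y)$ reduces to a finite sum, so passing to the limit in
\[
v - \sum_{S \in \mathcal{S}_{<\infty}} \lambda_S v^{(S)} \in \Delta\Bigl(X,\, d - \sum_{S \in \mathcal{S}_{<\infty}} \lambda_S \delta_S\Bigr)
\]
along an exhaustion $\mathcal{S}_{<\infty} \nearrow \mathcal{S}$ yields $v^{(1)} \in \Delta(X, d_1)$.

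The main obstacle I anticipate is exactly this limit passage: one must simultaneously control a possibly uncountable sum of non-negative functions indexed by all $d$-splits, and transport a $\Delta$-type inequality through the limit. The pointwise bound $V(x) \le v(x) < \infty$ secures the convergence, and Lemma~\ref{Lem:FinitelyManySeparatingSplits} ensures the right-hand side stabilizes once $\mathcal{S}_{<\infty}$ contains the finitely many splits separating the given pair. Once the vertex case is in hand, the final assembly reads
\[
f = \Bigl(\sum_i t_i v_i^{(1)} + h\Bigr) + \sum_{S \in \mathcal{S}} \lambda_S \Bigl(\sum_i t_i v_i^{(S)}\Bigr),
\]
where the first bracket lies in $\Delta(X,d_1)$ (a convex combination plus the non-negative $h$) and $\sum_i t_i v_i^{(S)} \in \Delta(X,\delta_S)$ by convexity, giving the desired decomposition.
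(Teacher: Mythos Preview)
Your proof is correct and follows essentially the same route as the paper: reduce from $\Delta(X,d)$ to $\mathrm{E}(X,d)$ by subtracting a non-negative remainder, reduce to vertices by convex combination (via the (LRC)), and decompose each vertex using the preceding lemma. One minor difference: where you bound $V(x)\le v(x)$ and pass to a limit along finite subsystems, the paper observes more directly that for each $x$ there is $x'$ with $\{x,x'\}\in A(v)$, and then $v^{(S)}(x)=0$ for every $S\notin\mathcal{S}_{xx'}$, so $V(x)$ is in fact a \emph{finite} sum and no limiting argument is needed.
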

\begin{proof}
Let $\mathcal{S}$ be the set of all $d$-splits and let $S \in \mathcal{S}$. For $f \in \Sigma^0(\mathrm{E}(X,d))$, since $\mathrm{rank}(A(f))=0$ and thus $A(f)$ is in particular not bipartite, there are $a,a' \in X$ such that $\{a,a'\} \in A(f)$ and either $a,a' \in A$ or $a,a' \in B$. Assume without loss of generality that $a,a' \in A$. Then, for any $b,b' \in B$, one has $\{b,b'\} \notin A(f)$ as we argued in the proof of Lemma~\ref{Lem:DecompositionInduction}. We set
\[
f_S(x) \; = \; \left\lbrace{\begin{matrix} 
0 &\text{if }  x \in A, \\
1 &\text{if } x \in B. \end{matrix}}\right.
\]
We first show that for every $x \in X$, one has $\sum_{S \in \mathcal{S}} \lambda_Sf_S(x) \neq \infty$. Note that for every $x \in X$, there exists $x' \in X$ such that $\{x,x'\} \in A^d(f)$. Furthermore, we have
\[
d(x,x') = d_0(x,x') + \sum_{\substack{S \in \mathcal{S} \\ S(x) \neq S(x')}} \alpha^d_S \delta_S(x,x').
\]
Since $\alpha^d_S \in [0,\infty) \cap \frac{1}{2}\mathbb{Z}$, we deduce that the set
\[
\mathcal{S}_{xx'} := \{S \in \mathcal{S} : S(x) \neq S(x')\}
\]
is finite. Moreover, for every $S := \{A,B\} \in \mathcal{S} \setminus \mathcal{S}_{xx'}$, since $\{x,x'\} \in A^d(f)$, one has $x,x' \in A$ and $f_S(x) = 0 = f_S(x')$ by definition. It follows that 
$\sum_{S \in \mathcal{S}} \lambda_Sf_S \in \R^X$ as well as
\[
f_1 := f - \sum_{S \in \mathcal{S}} \lambda_Sf_S \in \R^X.
\]
For $d_1 := d - \sum_{S \in \mathcal{S}} \lambda_S \delta_S$, it now remains to show that $f_1 \in \Delta(X,d_1)$. For every $x,y \in X$, there are $x',y' \in X$ such that $\{x,x'\},\{y,y'\} \in A^d(f)$. Since for every $S \in \mathcal{S}_{xy}$, one has $f_S(x) + f_S(y)=1$, it follows that $ \mathcal{S}_{xy} \subset \mathcal{S}_{xx'} \cup \mathcal{S}_{yy'} =: \mathcal{S}_{<\infty}$ where $|\mathcal{S}_{<\infty}| < \infty$. By  Lemma~\ref{Lem:DecompositionInduction} and setting $d_1^{<\infty} := d - \sum_{S \in \mathcal{S}_{< \infty}} \lambda_S \delta_S$, it follows that 
\[
f_1^{<\infty} := f - \sum_{S \in \mathcal{S}_{< \infty}} \lambda_S f_S \in \Delta \left(X, d_1^{<\infty}\right)
\]
and thus
\begin{align*}
f_1(x)+f_1(y) 
= f_1^{<\infty}(x) + f_1^{<\infty}(y)
&\ge d_1^{<\infty}(x,y) \\
&\ge d(x,y)- \sum_{S \in \mathcal{S}_{xy}} \lambda_S \delta_S(x,y) \\
&= d_1(x,y).
\end{align*}
This shows that 
\[
\Sigma^0(\mathrm{E}(X,d)) \subset \left( \Delta(X,d_1) + \sum_{S \in \mathcal{S}} \lambda_S \Delta(X,\delta_S) \right).
\]
Now, using that $(X,d)$ satisfies the (LRC), we can take for each cell of $\mathrm{E}(X,d)$, all finite convex combinations of its vertices and by convexity of $\Delta(X,d_1)$ and $\Delta(X,\delta_S)$ for every $S \in \mathcal{S}$, we deduce that
\[
\mathrm{E}(X,d) \subset \left( \Delta(X,d_1) + \sum_{S \in \mathcal{S}} \lambda_S \Delta(X,\delta_S) \right).
\]
Adding finally $[0,\infty)^X$ on both sides and intersecting with $\mathbb{R}^X$, we get
\[
\Delta(X,d) \subset \mathbb{R}^X \cap \left( \Delta(X,d_1) + \sum_{S \in \mathcal{S}} \lambda_S \Delta(X,\delta_S) \right).
\]
Since the other inclusion is easy to see, we obtain the desired result.
\end{proof}

Let $f \in \mathrm{E}(X,d) \subset \Delta(X,d)$. From 
\begin{equation}\label{eq:DeltaDecomposition}
\Delta(X,d) = \R^X \cap \left( \Delta(X,d_1) + \sum_{S \in \mathcal{S}} \lambda_S \Delta(X,\delta_S) \right),
\end{equation}
we have a decomposition $f = f_1 + \sum_{S \in \mathcal{S}} \lambda_S f_S$. Note that if there are for $S \in \mathcal{S}$, functions $ f_S \ge g_S \in \Delta(X,\delta_S)$ and $f_1 \ge g_1 \in \Delta(X,d_1)$ where not all inequalities are equalities, then $g := g_1 + \sum_{S \in \mathcal{S}} \lambda_S g_S \in \Delta(X,d)$ by \eqref{eq:DeltaDecomposition}. Since $g \le f \in \mathrm{E}(X,d)$, this contradicts the minimality of $f$. We must therefore have that 
\[
f \in \R^X \cap \left( \mathrm{E}(X,d_1) + \sum_{S \in \mathcal{S}} \lambda_S \mathrm{E}(X,\delta_S) \right).
\]
We have thus shown that \eqref{eq:DeltaDecomposition} implies
\[
\mathrm{E}(X,d) \subset \R^X \cap \left( \mathrm{E}(X,d_1) + \sum_{S \in \mathcal{S}} \lambda_S \mathrm{E}(X,\delta_S) \right).
\]

Hence, we obtain:

\begin{Cor}\label{Cor:CorEI}
Let $(X,d)$ be a metric space with integer-valued metric satisfying the (LRC). Let $\mathcal{S}$ be the family of all $d$-splits of $X$ such that $d = d_0 + \sum_{S \in \mathcal{S}} \alpha_S \delta_S$ and let $\lambda_S \in (0,\alpha_S]$ for every $S \in \mathcal{S}$ so that $d_1 := d - \sum_{S \in \mathcal{S}} \lambda_S \delta_S$. Then
\begin{equation}\label{eq:InjectiveHullDecomposition}
\mathrm{E}(X,d) \subset \R^X \cap \left( \mathrm{E}(X,d_1) + \sum_{S \in \mathcal{S}} \lambda_S \mathrm{E}(X,\delta_S) \right).
\end{equation}
\end{Cor}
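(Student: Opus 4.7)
The plan is to reduce the statement to Theorem~\ref{Thm:thm4}, which is the analogous $\Delta$-level decomposition, and then push it to $\mathrm{E}$ by a minimality argument. The extra paragraph preceding the Corollary in the manuscript essentially tells us how to proceed; my role is to make the two ingredients (existence of the decomposition, and preservation of $\Delta$ under shrinking) explicit.

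First I would fix $f \in \mathrm{E}(X,d)$. Since $\mathrm{E}(X,d) \subset \Delta(X,d)$, Theorem~\ref{Thm:thm4} produces $f_1 \in \Delta(X,d_1)$ and $f_S \in \Delta(X,\delta_S)$ for every $S \in \mathcal{S}$ with
\[
f = f_1 + \sum_{S \in \mathcal{S}} \lambda_S f_S,
\]
the sum being pointwise well-defined since the left-hand side belongs to $\R^X$. Next, suppose that this decomposition is not term-by-term extremal, i.e. either $f_1$ strictly dominates some $g_1 \in \mathrm{E}(X,d_1)$, or some $f_S$ strictly dominates some $g_S \in \mathrm{E}(X,\delta_S)$. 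I would then form
\[
g := g_1 + \sum_{S \in \mathcal{S}} \lambda_S g_S,
\]
choosing $g_1 = f_1$ (resp.\ $g_S = f_S$) whenever no strict reduction is possible, so that $g \le f$ pointwise with at least one strict inequality somewhere. The easy inclusion in Theorem~\ref{Thm:thm4} gives $g \in \Delta(X,d)$, and since $g \le f \in \R^X$ and each summand is non-negative up to a finite shift, $g \in \R^X$. This contradicts extremality of $f$ in $\Delta(X,d)$; hence $f_1 \in \mathrm{E}(X,d_1)$ and $f_S \in \mathrm{E}(X,\delta_S)$ for every $S$, which is the desired inclusion.

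The main technical point will be to justify that the shrunken sum $g$ really does define an element of $\R^X$ (and not merely of $[0,\infty]^X$), so that the easy inclusion of Theorem~\ref{Thm:thm4} applies. This is where the groundwork already laid in the proof of Theorem~\ref{Thm:thm4} pays off: for each $x \in X$ the set $\{S \in \mathcal{S} : f_S(x) \ne 0\}$ is controlled by the finite collection $\mathcal{S}_{xx'}$ of $d$-splits separating $x$ from some $x'$ with $\{x,x'\} \in A(f)$ (finite by Lemma~\ref{Lem:FinitelyManySeparatingSplits}), so only finitely many terms in the series at $x$ can fail to vanish. Together with $g_S \le f_S$ this forces $g \in \R^X$, and the minimality argument above then closes the proof.
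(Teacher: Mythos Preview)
Your proposal is correct and follows essentially the same approach as the paper: decompose $f$ via Theorem~\ref{Thm:thm4}, then use extremality of $f$ to force each summand to be extremal. The one remark is that your finiteness argument for $g \in \R^X$ is more elaborate than necessary; since every element of $\Delta(X,d_1)$ and of $\Delta(X,\delta_S)$ is non-negative, one has $0 \le g \le f \in \R^X$ pointwise, which already gives $g \in \R^X$.
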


It is also easy to see that \eqref{eq:InjectiveHullDecomposition} implies \eqref{eq:DeltaDecomposition}. Remember that from \cite{Lan}, there is a $1$-Lipschitz map $p \colon \Delta(X,d) \to \mathrm{E}(X,d)$ such that for every $f \in \Delta(X,d)$, one has $p(f) \le f$. Then $f \ge p(f) =:g \in \mathrm{E}(X,d)$. From \eqref{eq:InjectiveHullDecomposition}, we obtain a decomposition of $g$ as $g := g_1 + \sum_{S \in \mathcal{S}} \lambda_S g_S$. Moreover, $f-g \in [0,\infty)^X$, hence $g_1 + (f-g) \in \Delta(X,d_1)$ and thus
\[
f = g_1 + (f-g) + \sum_{S \in \mathcal{S}} \lambda_S g_S \in \mathbb{R}^X \cap \left( \Delta(X,d_1) + \sum_{S \in \mathcal{S}} \lambda_S \Delta(X,\delta_S) \right)
\]
which is the desired result.


\section{The Buneman Complex and Related Topics}

If $\mathcal{S}$ is a split system (on a set $X$) and $\alpha \colon \mathcal{S} \to (0,\infty)$ is any map $S \mapsto \alpha_S$, the pair $(\mathcal{S},\alpha)$ is called a \textit{split system pair (of $X$)}. If $\mathcal{S}$ is weakly compatible in addition, then $(\mathcal{S},\alpha)$ is called a weakly compatible \textit{split system pair}. 

Let now $\mathcal{S}$ be a weakly compatible split system on a pseudometric space with integer-valued pseudometric $(X,d)$ and assume that 
\[
d = \sum_{S \in \mathcal{S}} \alpha_S \delta_S.
\]
By Theorem~\ref{Thm:thm3}, $\mathcal{S}$ is the set of all $d$-splits of $X$ and $d$ is thus totally split-decomposable. Let $\alpha \colon \mathcal{S} \to (0,\infty)$ denote the map given by $S \mapsto \alpha_S$. The weakly compatible split system pair $(\mathcal{S},\alpha)$ is called \textit{the split system pair associated to $(X,d)$}. Unless otherwise stated, this is the split system pair that we refer to in the sequel, when considering a totally split-decomposable pseudometric space $(X,d)$. 

Let now $(\mathcal{S},\alpha)$ be any split system pair on a set $X$,
\[
U(\mathcal{S}) := \{ A \subset X : \text{there is } S \in \mathcal{S} \text{ such that } A \in S \}
\]
and for $\mu \colon U(\mathcal{S}) \to [0,\infty)$, let
\[
\mathrm{supp}(\mu) = \{ A \in U(\mathcal{S}) : \mu(A) > 0 \}.
\]
For $A \subset X$, we shall denote its complement $X \setminus A$ by $\bar{A}$. If $A \in U(\mathcal{S})$, we shall denote the split $\{A,\bar{A}\} \in \mathcal{S}$ by $S_A$.
Moreover, we set
\begin{align*}
P(\mathcal{S},\alpha) :=  \{ \mu \colon U(\mathcal{S}) \to [0,\infty) :\ &\text{for all } A \in U(\mathcal{S}), \text{ one has } \\
 &\mu(A) \ge 0 \text{ and } \mu(A) + \mu(\bar{A}) \ge \alpha_{S_A}/2  \}
\end{align*}
and we define an infinite dimensional hypercube 
\begin{align*}
H(\mathcal{S},\alpha) :=  \{ \mu \colon U(\mathcal{S}) \to [0,\infty) :\ &\text{for all } A \in U(\mathcal{S}), \text{ one has } \\
 &\mu(A) \ge 0 \text{ and } \mu(A) + \mu(\bar{A}) = \alpha_{S_A}/2  \}.
\end{align*}
Note that $H(\mathcal{S},\alpha)$ has a natural cell complex structure, cells are sets of the form 
\[
[\mu] := \{ \mu' \in H(\mathcal{S},\alpha): \mathrm{supp}(\mu') \subset \mathrm{supp}(\mu)\}
\]
where $\mu \in H(\mathcal{S},\alpha)$. Furthermore, the cells of $H(\mathcal{S},\alpha)$ are (possibly) infinite dimensional hypercubes. The \textit{Buneman complex} is the subcomplex of $H(\mathcal{S},\alpha)$ given by 
\begin{align*}
&B(\mathcal{S},\alpha) := \\
&\left \{ \mu \in H(\mathcal{S},\alpha) : \text{if } A,B \in \mathrm{supp}(\mu) \text{ and } A \cup B = X, \text{ then } A \cap B = \emptyset \right \}.
\end{align*}
Furthermore, let
\begin{align*}
&\bar{T}(\mathcal{S},\alpha) := \\
&\left \{ \mu \in H(\mathcal{S},\alpha) : \text{if } \{A_i \}_{i \in I} \subset \mathrm{supp}(\mu) \text{ and } \bigcup_{i \in I} A_i = X, \text{ then } \bigcap_{i \in I} A_i = \emptyset  \right \}.
\end{align*}
It is easy to see that $\bar{T}(\mathcal{S},\alpha)$ is a subcomplex of $B(\mathcal{S},\alpha)$, cf. \cite[Section~4]{DreHM}. Indeed, $\bar{T}(\mathcal{S},\alpha) \subset B(\mathcal{S},\alpha)$ and $\bar{T}(\mathcal{S},\alpha)$ is a subcomplex of $H(\mathcal{S},\alpha)$ since if $\mu \in \bar{T}(\mathcal{S},\alpha)$, then for any $\mu' \in H(\mathcal{S},\alpha)$ such that $\mathrm{supp}(\mu') \subset \mathrm{supp}(\mu)$, one obviously has $\mu' \in \bar{T}(\mathcal{S},\alpha)$. We can thus denote by $[\psi]$ the smallest cell of $H(\mathcal{S},\alpha)$ containing $\psi \in H(\mathcal{S},\alpha)$ and thus if $\psi \in B(\mathcal{S},\alpha)$, then $[\psi] \subset B(\mathcal{S},\alpha)$ and similarly if $\psi \in \bar{T}(\mathcal{S},\alpha)$, then $[\psi] \subset \bar{T}(\mathcal{S},\alpha)$.

Now, for $x \in X$ consider the map $\phi_x \colon U(\mathcal{S}) \to [0,\infty)$ which is defined as
\[
\phi_x(A) \; = \; \left\lbrace{\begin{matrix} 
\frac{\alpha_{S_A}}{2}  &\text{if }  x \notin A  , \\
0  &\text{if } x \in A.\end{matrix}}\right.
\]
Moreover, let $d_1 \colon \R^{U(\mathcal{S})} \times \R^{U(\mathcal{S})} \to [0,\infty]^X$ be given by
\[
(\mu,\psi) \mapsto  \sum_{A \in U(\mathcal{S})} | \mu(A) - \psi(A)|.
\]
The map $\kappa \colon \R^{U(\mathcal{S})} \to [0,\infty]^{X}$ given by $\mu \mapsto \kappa(\mu)$ where for $x \in X$, one has:
\[
\kappa(\mu)(x) = d_1(\mu,\phi_x).
\]

Let $(\mathcal{S},\alpha)$ be a split system pair on a set $X$ and assume $d:=\sum_{S \in \mathcal{S}} \alpha_S \delta_S$ defines a pseudometric on $X$. It follows easily that the following hold:
\begin{enumerate}[$(i)$] 
\item For every $x,y \in X$, one has $d_1(\phi_x,\phi_y) = d(x,y)$.
\item For every $x \in X$, one has $\phi_x \in \bar{T}(\mathcal{S},\alpha) \subset B(\mathcal{S},\alpha)$.
\item $\kappa$ is $1$-Lipschitz.
\end{enumerate}
For $x \in X$ and $S = \{A,\bar{A} \} \in \mathcal{S}$, let $S(x) := A$ if $x \in A$ and $S(x) := \bar{A}$ if $x \in \bar{A}$. For a further $y \in X$ and $\psi \colon \mathcal{S} \to \R$, one has:
\begin{align*}
\kappa(\psi)(x) + \kappa(\psi)(y) &= 
\begin{aligned}[t]
&\sum_{\substack{S \in \mathcal{S} \\ S(x) \neq S(y)}} 
\begin{aligned}[t]
&\biggl[ \bigl| \psi(S(x)) - \phi_x(S(x)) \bigr| +  \bigl| \psi(S(x)) - \phi_y(S(x)) \bigr|  \\ 
&+  \bigl| \psi(\overline{S(x)}) - \phi_x(\overline{S(x)}) \bigr| +   \bigl| \psi(\overline{S(x)}) - 
\phi_y(\overline{S(x)}) \bigr| \biggr]
\end{aligned} \\
&+ \sum_{\substack{S \in \mathcal{S} \\ S(x) = S(y)}}
\begin{aligned}[t]
&\biggl[  \bigl| \psi(S(x)) - \phi_x(S(x)) \bigr| +  \bigl| \psi(S(x)) - \phi_y(S(x)) \bigr|  \\ 
&+  \bigl| \psi(\overline{S(x)}) - \phi_x(\overline{S(x)}) \bigr | +   \bigl| \psi(\overline{S(x)}) - 
\phi_y(\overline{S(x)})\bigr | \biggr]
\end{aligned} \\
\end{aligned}\\
&\ge 
\begin{aligned}[t]
&\sum_{\substack{S \in \mathcal{S} \\ S(x) \neq S(y)}} \alpha_S \delta_S(x,y) \\
&+ \sum_{\substack{S \in \mathcal{S} \\ S(x) = S(y)}} \biggl[ 2  \bigl| \psi(S(x)) \bigr| + 2 \bigl| \psi(\overline{S(x)}) - \alpha_S /2 \bigr |  \biggr],
\end{aligned}
\end{align*}
Hence, it follows that
\begin{equation}\label{eq:e1}
\frac{1}{2}[\kappa(\psi)(x) + \kappa(\psi)(y) - d(x,y)] \ge \sum_{\substack{S \in \mathcal{S} \\ S(x) = S(y)}} \biggl[  \bigl| \psi(S(x)) \bigr| +  \bigl| \psi(\overline{S(x)}) - \alpha_S /2 \bigr|  \biggr],
\end{equation}
and equality holds if $\psi \in H(\mathcal{S},\alpha)$.

For simplicity, we denote the injective hull $\mathrm{E}(X,d)$ by $\E d$ when the underlying space $X$ is clear (unlike in \cite{Lan} where it is denoted by $\E X$) and $\mathrm{E}'(X,d)$ by $\mathrm{E}'(d)$. Analoguously, $\mathrm{\Delta}(X,d)$ is denoted by $\Delta(d)$ (it corresponds to the space $\Delta(X)$ in \cite{Lan}).

\begin{Lem}\label{Lem:l11}
Let $(\mathcal{S},\alpha)$ be a split system pair on a set $X$ and assume that $d:=\sum_{S \in \mathcal{S}} \alpha_S \delta_S$ defines a pseudometric on $X$. For every $\mu \in H(\mathcal{S},\alpha)$, the following are equivalent:
\begin{enumerate}[$(i)$]
\item $\kappa(\mu) \in \mathrm{E}'(d)$.
\item $\mu \in \bar{T}(\mathcal{S},\alpha)$.
\end{enumerate}
\end{Lem}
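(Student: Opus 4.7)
The plan is to reduce both conditions to one and the same combinatorial statement about $\mathrm{supp}(\mu)$ by specializing the equality case of \eqref{eq:e1}, which holds precisely on $H(\mathcal{S},\alpha)$. For $\mu \in H(\mathcal{S},\alpha)$ one has $\mu(\overline{S(x)}) = \alpha_S/2 - \mu(S(x))$, so each summand on the right-hand side of \eqref{eq:e1} collapses to $2\mu(S(x))$ and we obtain
\[
\tfrac{1}{2}\bigl[\kappa(\mu)(x) + \kappa(\mu)(y) - d(x,y)\bigr] \;=\; 2 \sum_{\substack{S \in \mathcal{S} \\ S(x)=S(y)}} \mu(S(x)) \;\ge\; 0.
\]
This shows $\kappa(\mu) \in \Delta(d)$ whenever $\kappa(\mu)$ is real-valued, and it yields the key translation: $\{x,y\} \in A(\kappa(\mu))$ if and only if $\mu(S(x)) = 0$ for every $S \in \mathcal{S}$ with $S(x) = S(y)$, equivalently no $A \in \mathrm{supp}(\mu)$ contains both $x$ and $y$.

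Next I would reword the defining property of $\bar{T}(\mathcal{S},\alpha)$ in pointwise form. Taking the contrapositive of its definition, $\mu \notin \bar{T}(\mathcal{S},\alpha)$ amounts to the existence of some $x \in X$ such that the subfamily $\{A \in \mathrm{supp}(\mu) : x \in A\}$ already covers $X$, i.e.\ for every $y \in X$ some $A \in \mathrm{supp}(\mu)$ contains both $x$ and $y$. Combined with the preceding observation, $\mu \in \bar{T}(\mathcal{S},\alpha)$ holds if and only if for every $x \in X$ there is some $y \in X$ with $\{x,y\} \in A(\kappa(\mu))$, which is exactly the condition $\bigcup A(\kappa(\mu)) = X$.

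To close the argument I would combine the two steps. In the direction (ii)$\Rightarrow$(i), the existence of a partner $y$ for every $x$ automatically forces $\kappa(\mu)(x) \le d(x,y) < \infty$, so $\kappa(\mu) \in \R^X$, hence $\kappa(\mu) \in \Delta(d)$ by the displayed identity and finally $\kappa(\mu) \in \mathrm{E}'(d)$. The reverse direction (i)$\Rightarrow$(ii) is immediate, since $\mathrm{E}'(d) \subset \Delta(d) \subset \R^X$ so the translation above applies to every $x \in X$. The one delicate step I anticipate is the contrapositive reformulation of the $\bar{T}$-axiom, where one must pass from a bad family $\{A_i\} \subset \mathrm{supp}(\mu)$ with $\bigcap_i A_i \ne \emptyset$ to the possibly larger subfamily $\{A \in \mathrm{supp}(\mu) : x \in A\}$ cut out by a single witness $x \in \bigcap_i A_i$; the rest is bookkeeping on top of \eqref{eq:e1}.
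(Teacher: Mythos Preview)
Your proposal is correct and follows essentially the same approach as the paper: both arguments hinge on specializing \eqref{eq:e1} to $\mu\in H(\mathcal{S},\alpha)$ and translating the $\bar T$-condition into a pointwise covering statement about $\mathrm{supp}(\mu)$. Your presentation is a bit more direct---you extract the common combinatorial core and prove a single biconditional, whereas the paper argues each implication separately by contradiction---but the substantive content is identical.
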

\begin{proof}
Consider $\mu \in H(\mathcal{S},\alpha)$. Let us first show that $(ii)$ implies $(i)$. Assume that $\mu \in \bar{T}(\mathcal{S},\alpha)$ and suppose by contradiction that $\kappa(\mu) \notin \mathrm{E}'(d)$. For $x, y \in X$ we have
\[
\kappa(\mu)(x) + \kappa(\mu)(y) = d_1(\mu,\phi_x) + d_1(\mu,\phi_y) \ge d_1(\phi_x,\phi_y) = d(x,y)
\]
which proves that $\kappa(\mu) \in \Delta(d)$. By our contradiction assumption, there is $x \in X$ such that for every $y \in X$, one has:
\[
\kappa(\mu)(x) + \kappa(\mu)(y) > d(x,y)
\]
and thus by \eqref{eq:e1}, there is $S_y \in \mathcal{S}$ such that
\begin{enumerate}[$(a)$]
\item $S_y(x) = S_y(y)$ and
\item $\mu(S_y(x)) > 0$ (noting that this follows from the fact that $\mu \in H(\mathcal{S},\alpha)$).
\end{enumerate}
Thus, we have $X = \bigcup_{y \in X} S_y(x)$ where for every $y$, one has $\mu(S_y(x)) > 0$. Moreover, $\bigcap_{y \in X} S_y(x)$ is non-empty since it contains $x$. It follows that $\mu$ does not satisfy $(ii)$, which is a contradiction.

Now, we show that $(i)$ implies $(ii)$. Assume that $\kappa(\mu) \in \mathrm{E}'(d)$. For every $x\in X$, there is $y \in X$ such that $\kappa(\mu)(x) + \kappa(\mu)(y) = d(x,y)$. Hence, by \eqref{eq:e1}, it follows that for any $S \in \mathcal{S}$:
\begin{equation}\label{eq:e2}
\text{if } S(x) = S(y), \text{ then } \mu (S(x)) = 0 = \mu(S(y)).
\end{equation}
Note that for any $A_i \in \mathrm{supp}(\mu)$, there exists $S_i \in \mathcal{S}$ and $x_i \in X$ such that
\[
A_i = S_i(x_i).
\]
Now, if $X = \bigcup_{i \in I} A_i = \bigcup_{i \in I} S_i(x_i)$ and if by contradiction $\bigcap_{i \in I} S_i(x_i) $ is assumed to be non-empty, so that we can an arbitrary pick $z \in  \bigcap_{i \in I} S_i(x_i) $, then for every $i \in I$ one has:
\[
S_i(x_i) = S_i(z)
\]
and thus
\[
X = \bigcup_{i \in I} S_i(z).
\]
It is now easy to see that the existence of $z$ contradicts \eqref{eq:e2}. Indeed, for any $y \in X$, there is $S_j \in \{ S_i \}_{i \in I} \subset \mathcal{S}$ such that $y \in S_j(z)$ and hence 
\begin{equation}\label{eq:e222}
S_j(y) = S_j(z).
\end{equation}
However,
\begin{equation}\label{eq:e2222}
S_j(z) = S_j(x_j) = A_j \in \mathrm{supp}(\mu)
\end{equation}
and $y$ can be chosen so that $\kappa(\mu)(z) + \kappa(\mu)(y) = d(z,y)$. Thus by \eqref{eq:e222} and \eqref{eq:e2}, one has $\mu (S_j(y)) = 0 = \mu(S_j(z))$, which is a contradiction to \eqref{eq:e2222}. This finishes the proof.
\end{proof}

\begin{Lem}\label{Lem:l12}
Let $(X,d)$ be a totally split-decomposable metric space with integer-valued metric satisfying the (LRC). Then, the map $\bar{\kappa} := \kappa|_{\bar{T}(\mathcal{S},\alpha)} \colon \bar{T}(\mathcal{S},\alpha) \to \mathrm{E}(d)$ is surjective.
\end{Lem}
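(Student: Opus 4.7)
My plan is to construct a preimage $\mu \in \bar{T}(\mathcal{S},\alpha)$ of any given $f \in \mathrm{E}(d)$ by exploiting the canonical split-decomposition of $f$ provided by the previous section.

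First, I would apply Corollary~\ref{Cor:CorEI} with $\lambda_S := \alpha_S$ for every $S \in \mathcal{S}$. Since $d$ is totally split-decomposable, the residual pseudometric $d_1 := d - \sum_{S \in \mathcal{S}} \alpha_S \delta_S$ is identically zero, and a quick check shows $\mathrm{E}(X,0) = \{0\}$. The corollary thus delivers a pointwise-convergent representation
\[
f = \sum_{S \in \mathcal{S}} \alpha_S f_S, \qquad f_S \in \mathrm{E}(X,\delta_S).
\]
Next, I analyze $\mathrm{E}(X,\delta_S)$ directly: because $\delta_S$ takes only the values $0$ and $1$, extremality combined with the $1$-Lipschitz condition forces every $f_S$ to be constant on each side of the split $S = \{A,\bar{A}\}$, of the form $f_S|_A \equiv a_S$ and $f_S|_{\bar{A}} \equiv 1-a_S$ for some $a_S \in [0,1]$.

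Having extracted these parameters, I define $\mu \colon U(\mathcal{S}) \to [0,\infty)$ by $\mu(A) := \alpha_S a_S/2$ and $\mu(\bar{A}) := \alpha_S(1-a_S)/2$ for every split $S = \{A,\bar{A}\} \in \mathcal{S}$. Then $\mu(A)+\mu(\bar{A}) = \alpha_S/2$, so $\mu \in H(\mathcal{S},\alpha)$. The formula for $\kappa$, specialized to $\mu \in H(\mathcal{S},\alpha)$, reduces to $\kappa(\mu)(x) = \sum_{S \in \mathcal{S}} 2\mu(S(x))$, which by construction equals $\sum_{S} \alpha_S f_S(x) = f(x)$; hence $\kappa(\mu) = f$.

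Finally, (LRC) gives $\mathrm{E}(d) = \mathrm{E}'(d)$, so $\kappa(\mu) = f \in \mathrm{E}'(d)$, and Lemma~\ref{Lem:l11} then upgrades $\mu$ from $H(\mathcal{S},\alpha)$ to $\bar{T}(\mathcal{S},\alpha)$, completing the argument. The step I expect to require the most care is the first: one needs to verify that the decomposition delivered by Corollary~\ref{Cor:CorEI} is genuinely of the advertised form when the index set $\mathcal{S}$ is infinite, and that the parameters $a_S \in [0,1]$ can be read off unambiguously from each summand $f_S$. Once this bookkeeping is in place, the construction of $\mu$ and the invocation of Lemma~\ref{Lem:l11} are essentially mechanical.
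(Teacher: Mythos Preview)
Your proposal is correct and follows essentially the same route as the paper's proof: decompose $f$ via Corollary~\ref{Cor:CorEI} (with $d_1=0$ since $d$ is totally split-decomposable), read off the constant values of each $f_S \in \mathrm{E}(\delta_S)$ on the two sides of $S$ to define $\mu \in H(\mathcal{S},\alpha)$, verify $\kappa(\mu)=f$, and then invoke Lemma~\ref{Lem:l11} together with $\mathrm{E}(d)=\mathrm{E}'(d)$ to place $\mu$ in $\bar{T}(\mathcal{S},\alpha)$. The paper names this preimage $\psi_f$ but the construction is identical to your $\mu$.
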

\begin{proof}
Let $f \in \mathrm{E}(d)$. By Corollary \ref{Cor:CorEI}, we have $\mathrm{E}(d) \subset \sum_{S \in \mathcal{S}} \alpha_S \mathrm{E}(\delta_S)$. Thus, we have a decomposition
\[
f = \sum_{S \in \mathcal{S}} \alpha_S f_S
\]
where $f_S \in \mathrm{E}(\delta_S)$ implies that if $S=\{A,\bar{A}\}$, then for any $x \in A$ and $y \in \bar{A}$ one has that $f_S$ is constantly equal to $f_S(x)$ on $A$, constantly equal to $f_S(y)$ on $\bar{A}$ and $f_S(x),f_S(y) \ge 0$ as well as $f_S(x) + f_S(y) = 1$. Define the map $\psi_f \colon U(\mathcal{S}) \to [0,\infty)$ by setting for every $S=\{A,\bar{A}\}$ as well as for arbitrarily chosen $x \in A$ and $y \in \bar{A}$:
\[
\psi_f(A) := \frac{\alpha_S}{2} f_S(x) \ \ \text{ and } \ \ \psi_f(\bar{A}) := \frac{\alpha_S}{2} f_S(y).
\]
It is clear that $\psi_f$ is well-defined, i.e., the above definition does not depend on the particular choice of $x$ and $y$. Furthermore, it is easy to see that
\begin{enumerate}[$(a)$]
\item $\psi_f \in H(\mathcal{S},\alpha)$ and
\item $\kappa(\psi_f) = f \in \mathrm{E}(d) = \mathrm{E}'(d)$.
\end{enumerate}
It then follows from Lemma \ref{Lem:l11} that $\psi_f \in \bar{T}(\mathcal{S},\alpha)$ and this finishes the proof.
\end{proof}
For a map $\phi \colon \mathcal{S} \to [0,\infty)$, let
\[
\mathcal{S}(\phi) := \{ S \in \mathcal{S} : S \subset \mathrm{supp}(\phi) \}.
\] 
Let us define for a cell $[\phi]$ of $\bar{T}(\mathcal{S},\alpha)$ and $x \in X$, the map $\gamma^x_{[\phi]} \colon U(\mathcal{S}) \to [0,\infty)$ given by
\[
\gamma^x_{[\phi]}(A) \; = \; \left\lbrace{\begin{matrix} 
\phi_x(A)  &\text{if }  A \in U( \mathcal{S}(\phi))   , \\
\phi(A)  &\text{if } A \in U(\mathcal{S} \setminus \mathcal{S}(\phi))  .\end{matrix}}\right.
\]
Note that one has $\psi \in [\phi]$ if and only if $\mathrm{supp}(\psi) \subset \mathrm{supp}(\phi)$ and it follows that if $A \in U(\mathcal{S} \setminus \mathcal{S}(\phi))$, then
\[
\psi(A) = \phi(A) = \gamma^x_{[\phi]}(A).
\]
Hence $\gamma^x_{[\phi]} \in [\phi]$ since $\mathrm{supp}(\gamma^x_{[\phi]}) \subset \mathrm{supp}(\phi)$.

\begin{Def}\label{Def:d1}
Let $i \colon (X,d_X) \to (Y,d_Y)$ be an isometric map of pseudometric spaces. We say that $Z \subset Y$ is \textit{$X$-gated} (for $i$ and with respect to $d_Y$) if and only if for every $x \in X$, there is $y_x \in Z$ such that for every $z \in Z$, one has
\[
d_Y(i(x),z) = d_Y(i(x),y_x) + d_Y(y_x,z).
\]
\end{Def}

\begin{Lem}\label{Lem:l13}
Let $(\mathcal{S},\alpha)$ be a split system pair on a set $X$ and assume that $d:=\sum_{S \in \mathcal{S}} \alpha_S \delta_S$ defines a pseudometric on $X$. Then, every cell $[\phi]$ of $\bar{T}(\mathcal{S},\alpha)$ is $X$-gated with respect to the restriction of $d_1$ to $\bar{T}(\mathcal{S},\alpha)$.
\end{Lem}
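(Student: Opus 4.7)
The plan is to show that the map $\gamma^x_{[\phi]}$ defined just before the lemma serves as the gate from $\phi_x$ to the cell $[\phi]$. The text already verifies that $\gamma^x_{[\phi]} \in [\phi]$, since $\mathrm{supp}(\gamma^x_{[\phi]}) \subset \mathrm{supp}(\phi)$, so the only thing left to check is that for an arbitrary $\psi \in [\phi]$ one has the equality
\[
d_1(\phi_x, \psi) = d_1(\phi_x, \gamma^x_{[\phi]}) + d_1(\gamma^x_{[\phi]}, \psi).
\]

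First I would observe that since every $A \in U(\mathcal{S})$ corresponds to a unique split $S_A \in \mathcal{S}$, the set $U(\mathcal{S})$ is partitioned as $U(\mathcal{S}) = U(\mathcal{S}(\phi)) \sqcup U(\mathcal{S} \setminus \mathcal{S}(\phi))$. This lets me split the sum defining $d_1$ into two pieces and handle each separately. On $U(\mathcal{S}(\phi))$ the function $\gamma^x_{[\phi]}$ coincides with $\phi_x$ by construction, so $|\phi_x(A) - \gamma^x_{[\phi]}(A)| = 0$ and the contribution of this piece to $d_1(\phi_x, \gamma^x_{[\phi]}) + d_1(\gamma^x_{[\phi]}, \psi)$ reduces to $|\phi_x(A) - \psi(A)|$, exactly matching the corresponding piece of $d_1(\phi_x, \psi)$. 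On $U(\mathcal{S} \setminus \mathcal{S}(\phi))$ the defining property of the cell $[\phi]$ gives $\psi(A) = \phi(A) = \gamma^x_{[\phi]}(A)$, so now $|\gamma^x_{[\phi]}(A) - \psi(A)| = 0$ and the sum of the two distance terms collapses to $|\phi_x(A) - \phi(A)| = |\phi_x(A) - \psi(A)|$, again matching the corresponding piece of $d_1(\phi_x, \psi)$.

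Summing the two pieces yields the required equality, proving that $\gamma^x_{[\phi]}$ is a gate for $\phi_x$ in $[\phi]$ (with respect to the isometric embedding $x \mapsto \phi_x$ from property $(i)$ before Lemma~\ref{Lem:l11}). Since $x$ was arbitrary, $[\phi]$ is $X$-gated. I do not foresee any serious obstacle: the construction of $\gamma^x_{[\phi]}$ is tailored precisely so that, on each of the two blocks of the partition of $U(\mathcal{S})$, one of the two terms $|\phi_x - \gamma^x_{[\phi]}|$ or $|\gamma^x_{[\phi]} - \psi|$ vanishes identically, which turns the triangle inequality on the $\ell^1$-metric $d_1$ into an equality piece by piece. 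The only point that deserves care is that the partition of $U(\mathcal{S})$ is genuinely disjoint and exhaustive, which follows from the uniqueness of $S_A$ given $A \in U(\mathcal{S})$.
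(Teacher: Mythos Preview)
Your argument is correct and is essentially identical to the paper's: both split the $\ell^1$-sum over $U(\mathcal{S})$ along the partition $U(\mathcal{S}(\phi)) \sqcup U(\mathcal{S}\setminus\mathcal{S}(\phi))$ and observe that on each piece one of the two terms $|\phi_x-\gamma^x_{[\phi]}|$ or $|\gamma^x_{[\phi]}-\psi|$ vanishes. The only detail the paper adds that you omit is a one-line verification that $d_1$ restricted to $\bar{T}(\mathcal{S},\alpha)$ is finite (hence a genuine metric), using Lemma~\ref{Lem:l11}.
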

\begin{proof}
We already noted that $\phi_x \in \bar{T}(\mathcal{S},\alpha)$ for every $x \in X$ and that the map $x \mapsto \phi_x$ is an isometric embedding of $X$ into $\bar{T}(\mathcal{S},\alpha)$. By Lemma \ref{Lem:l11}, if $\psi \in \bar{T}(\mathcal{S},\alpha)$, then for any $x \in X$, there is $y \in X$ such that $\kappa(\psi)(x) + \kappa(\psi)(y) = d(x,y)$ and thus $d_1(\phi_x,\psi) = \kappa(\psi)(x) < \infty$. In particular, the restriction of $d_1$ to $\bar{T}(\mathcal{S},\alpha)$ is a metric. Now, for $x \in X$ and $\psi \in [\phi]$, one has:
\begin{align*}
d_1(\phi_x,\psi) 
&= \sum_{A \in U(\mathcal{S})} | \phi_x(A) - \psi(A)| \\
&= \sum_{A \in U(\mathcal{S} \setminus \mathcal{S}(\phi))} | \phi_x(A) - \psi(A)| + \sum_{A \in U( \mathcal{S}(\phi))} | \phi_x(A) - \psi(A)| \\
&= \sum_{A \in U(\mathcal{S} \setminus \mathcal{S}(\phi))} | \phi_x(A) - \gamma^x_{[\phi]}(A)| + \sum_{A \in U( \mathcal{S}(\phi))} | \gamma^x_{[\phi]}(A) - \psi(A)| \\
&=d_1(\phi_x,\gamma^x_{[\phi]}) + d_1(\gamma^x_{[\phi]},\psi),
\end{align*}
which shows that $\gamma^x_{[\phi]}$ is a gate for $\phi_x$ in $[\phi]$ with respect to the metric $d_1$.
\end{proof}

\begin{Lem}\label{Lem:l14}
Let $(\mathcal{S},\alpha)$ be a split system pair on a set $X$ and assume that $d:=\sum_{S \in \mathcal{S}} \alpha_S \delta_S$ defines a pseudometric on $X$. Then, for every $\phi \in \bar{T}(\mathcal{S},\alpha)$, the split system $\mathcal{S}(\phi) \subset \mathcal{S}$ is antipodal, which means that for any $x \in X$, there is $y \in X$ such that: 
\begin{equation}\label{eq:e1333}
\text{ for every } S \in \mathcal{S}(\phi), \text{ one has } S(x) \neq S(y).
\end{equation}
For $x,y \in X$, if $d(x,y) = \kappa(\phi)(x) + \kappa(\phi)(y)$, then $x$ and $y$ satisfy \eqref{eq:e1333}.
\end{Lem}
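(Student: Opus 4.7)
The plan is to derive the antipodal existence in the first part of the lemma as an immediate consequence of the second (characterization) part, using Lemma~\ref{Lem:l11} to connect $\bar{T}(\mathcal{S},\alpha)$ with $\mathrm{E}'(d)$. Indeed, since $\phi \in \bar{T}(\mathcal{S},\alpha)$, that lemma gives $\kappa(\phi) \in \mathrm{E}'(d)$. By the very definition of $\mathrm{E}'(d)$, the graph $A(\kappa(\phi))$ has no isolated vertex; hence for every $x \in X$ there is some $y \in X$ with $\{x,y\} \in A(\kappa(\phi))$, i.e.
\[
d(x,y) \;=\; \kappa(\phi)(x) + \kappa(\phi)(y).
\]
Once the second assertion is known, this $y$ automatically witnesses \eqref{eq:e1333}, so the first assertion follows.

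To prove the second assertion I would apply the inequality \eqref{eq:e1} with $\psi = \phi$. Because $\bar{T}(\mathcal{S},\alpha) \subset H(\mathcal{S},\alpha)$, the remark following \eqref{eq:e1} guarantees that \eqref{eq:e1} holds as an equality. Thus the assumption $d(x,y) = \kappa(\phi)(x) + \kappa(\phi)(y)$ forces
\[
\sum_{\substack{S \in \mathcal{S} \\ S(x) = S(y)}} \Bigl[\, |\phi(S(x))| + \bigl|\phi(\overline{S(x)}) - \alpha_S/2\bigr|\, \Bigr] \;=\; 0.
\]
Each summand being non-negative, every term must vanish. For any split $S \in \mathcal{S}$ with $S(x) = S(y)$ this yields $\phi(S(x)) = 0$, and using the defining identity $\phi(S(x)) + \phi(\overline{S(x)}) = \alpha_S/2$ of $H(\mathcal{S},\alpha)$, also $\phi(\overline{S(x)}) = \alpha_S/2$.

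Now suppose for contradiction that some $S \in \mathcal{S}(\phi)$ satisfies $S(x) = S(y)$. By the definition of $\mathcal{S}(\phi)$ we have $S \subset \mathrm{supp}(\phi)$, so both $\phi(S(x))$ and $\phi(\overline{S(x)})$ are strictly positive; in particular $\phi(S(x)) > 0$, contradicting the vanishing just derived. Consequently $S(x) \ne S(y)$ for every $S \in \mathcal{S}(\phi)$, which is the desired antipodality, and combined with the first paragraph completes both parts of the lemma.

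The only genuinely delicate point is making sure the right direction of \eqref{eq:e1} is used: the inequality is stated with $\ge$, but it is the equality case (valid precisely because $\phi \in H(\mathcal{S},\alpha)$, not merely in $\R^{U(\mathcal{S})}$) that transports the geodesic identity $d(x,y) = \kappa(\phi)(x) + \kappa(\phi)(y)$ on $X$ into the pointwise vanishing of $\phi$ on all ``same-side'' splits; everything else is bookkeeping between $\mathrm{supp}(\phi)$ and $\mathcal{S}(\phi)$.
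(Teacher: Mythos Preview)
Your proof is correct and follows essentially the same route as the paper's: both use Lemma~\ref{Lem:l11} to produce, for each $x$, a partner $y$ with $d(x,y)=\kappa(\phi)(x)+\kappa(\phi)(y)$, and then argue that any split $S$ with $S(x)=S(y)$ must have $\phi(S(x))=0$, contradicting $S\in\mathcal{S}(\phi)$. The only cosmetic difference is that you invoke the equality case of \eqref{eq:e1} directly (valid since $\phi\in H(\mathcal{S},\alpha)$), whereas the paper re-derives an equivalent split-by-split identity \eqref{eq:132} from scratch; your version is slightly more economical but conceptually identical.
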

\begin{proof}
By Lemma \ref{Lem:l11}, $\kappa(\phi) \in \mathrm{E}'(d)$. Thus for any $x \in X$, there is $y \in X$ such that 
\[
d(x,y) = \kappa(\phi)(x) + \kappa(\phi)(y)
\]
which can be rewritten as
\begin{equation}\label{eq:33}
\sum_{S \in \mathcal{S}} \alpha_S \delta_S(x,y) = d_1(\phi,\phi_x) + d_1(\phi,\phi_y).
\end{equation}
It is easy to see that for every $S \in \mathcal{S}$, one has
\[
\alpha_S \delta_S(x,y) = \sum_{A \in S} | \phi_x(A) -  \phi_y(A)| \le \sum_{A \in S} \bigl[ | \phi_x(A) -  \phi(A)| +  | \phi(A) -  \phi_y(A)| \bigr]
\]
which together with \eqref{eq:33} imply
\begin{equation}\label{eq:132}
\alpha_S \delta_S(x,y) = \sum_{A \in S} \bigl[ | \phi_x(A) -  \phi(A)| +  | \phi(A) -  \phi_y(A)| \bigr].
\end{equation}
Assume now that there is $S \in \mathcal{S}(\phi)$ such that $S(x) = S(y)$, then by \eqref{eq:132}, one has
\[
0 = \alpha_S \delta_S(x,y) = \sum_{A \in S} \bigl[ | \phi_x(A) -  \phi(A)| +  | \phi(A) -  \phi_y(A)| \bigr] = 4 \phi(S(x)),
\]
which implies $S(x) \notin \mathrm{supp}(\phi)$ and thus $S \notin \mathcal{S}(\phi)$, which is a contradiction. This finishes the proof.
\end{proof}

By Lemma~\ref{Lem:l13} every cell $[\phi]$ of $\bar{T}(\mathcal{S},\alpha)$ is $X$-gated. Let $(\Gamma([\phi]),d_1)$ denote the sets of all $X$-gates of $[\phi]$ endowed with the restriction of $d_1$. A metric space $(X,d)$ is called \textit{antipodal} if there exists an involution $\sigma \colon X \to X$ such that for every $x,y \in X$, one has
\[
d(x,\sigma(x) ) = d(x,y ) + d(y,\sigma(x) ).
\]

\begin{Lem}\label{Lem:l15}
Let $(\mathcal{S},\alpha)$ be a split system pair on a set $X$ and assume that $d:=\sum_{S \in \mathcal{S}} \alpha_S \delta_S$ defines a pseudometric on $X$. Then, for every cell $[\phi]$ of $\bar{T}(\mathcal{S},\alpha)$, the metric space $(\Gamma([\phi]),d_1)$ is antipodal.
\end{Lem}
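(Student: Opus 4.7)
The plan is to construct the involution using the antipodality of the split system $\mathcal{S}(\phi)$ established in Lemma~\ref{Lem:l14}. For each $x \in X$ choose some $y_x \in X$ with $S(x) \neq S(y_x)$ for every $S \in \mathcal{S}(\phi)$, and define $\sigma \colon \Gamma([\phi]) \to \Gamma([\phi])$ by
\[
\sigma(\gamma^x_{[\phi]}) := \gamma^{y_x}_{[\phi]}.
\]
Since $\gamma^{y_x}_{[\phi]}$ is itself an $X$-gate, $\sigma$ takes values in $\Gamma([\phi])$.

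First I would verify well-definedness. If $\gamma^x_{[\phi]} = \gamma^{x'}_{[\phi]}$, then $\phi_x$ and $\phi_{x'}$ agree on $U(\mathcal{S}(\phi))$, which means that $x$ and $x'$ lie in the same block of every $S \in \mathcal{S}(\phi)$. Antipodality then forces $y_x$ and $y_{x'}$ to lie in the opposite block of every such split, hence in the same block as each other, so $\gamma^{y_x}_{[\phi]} = \gamma^{y_{x'}}_{[\phi]}$. The involution property $\sigma^2 = \id$ follows similarly: $y_{y_x}$ is opposite to $y_x$ on every $S \in \mathcal{S}(\phi)$, hence on the same side as $x$, giving $\gamma^{y_{y_x}}_{[\phi]} = \gamma^x_{[\phi]}$.

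It then remains to verify the antipodal identity $d_1(u,\sigma(u)) = d_1(u,v) + d_1(v,\sigma(u))$ for all $u,v \in \Gamma([\phi])$. Write $u = \gamma^x_{[\phi]}$ and $v = \gamma^z_{[\phi]}$, so that $\sigma(u) = \gamma^{y_x}_{[\phi]}$. On coordinates $A \in U(\mathcal{S} \setminus \mathcal{S}(\phi))$, all three gates coincide with $\phi$ and contribute nothing to any of the three distances. On $U(\mathcal{S}(\phi))$, a direct computation using the definition of $\gamma^{\bullet}_{[\phi]}$ shows that each split $S \in \mathcal{S}(\phi)$ contributes $\alpha_S$ to $d_1(\gamma^a_{[\phi]},\gamma^b_{[\phi]})$ when $S(a) \neq S(b)$ and $0$ otherwise. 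Since $S(y_x) \neq S(x)$ for every $S \in \mathcal{S}(\phi)$, for each such $S$ exactly one of the conditions $S(x) \neq S(z)$, $S(y_x) \neq S(z)$ holds, and summing over $\mathcal{S}(\phi)$ yields
\[
d_1(u,v) + d_1(v,\sigma(u)) = \sum_{S \in \mathcal{S}(\phi)} \alpha_S = d_1(u,\sigma(u)).
\]

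The proof is essentially bookkeeping; the only place where care is needed is the per-split computation of $d_1$, where each $S \in \mathcal{S}(\phi)$ contributes two summands of $\alpha_S/2$ (one on each side of the split). Once this accounting is unwound, the antipodal identity follows at once from the elementary observation that $z$ lies on the same side as exactly one of $x$ and $y_x$ for each $S \in \mathcal{S}(\phi)$.
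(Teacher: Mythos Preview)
Your proof is correct and follows essentially the same route as the paper's: define $\sigma$ via Lemma~\ref{Lem:l14}, check well-definedness and $\sigma^2=\id$ by tracking which side of each $S\in\mathcal{S}(\phi)$ the points lie on, and verify the antipodal identity using that the gates coincide on $U(\mathcal{S}\setminus\mathcal{S}(\phi))$ while on $U(\mathcal{S}(\phi))$ each split contributes $\alpha_S$ or $0$ according to whether it separates the two points. The paper carries out the last step coordinate-by-coordinate (its equation~\eqref{eq:14}) rather than split-by-split, but the arguments are the same.
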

\begin{proof}
Let $x \in X$. Lemma \ref{Lem:l14} implies that there is $y \in X$ such that for every $S \in \mathcal{S}(\phi)$, one has $S(x) \neq S(y)$. Now, define the map $\sigma \colon \Gamma([\phi]) \to \Gamma([\phi])$ by picking for each $\gamma^u_{[\phi]} \in \Gamma([\phi])$ an arbitrary element 
\[
v \in \bigcap_{S \in \mathcal{S}(\phi)} \overline{S(u)}
\]
and letting
\[
\gamma^u_{[\phi]} \mapsto \gamma^v_{[\phi]}.
\]
First note that if $\gamma^u_{[\phi]} = \gamma^{u'}_{[\phi]}$, one then has for each $A \in U(\mathcal{S}(\phi))$:
\[
\phi_u(A) = \gamma^u_{[\phi]}(A) = \gamma^{u'}_{[\phi]}(A) = \phi_{u'}(A)
\]
that is
\[
\bigcap_{S \in \mathcal{S}(\phi)} \overline{S(u)} = \bigcap_{S \in \mathcal{S}(\phi)} \overline{S(u')}.
\]
Note moreover that $\sigma$ is well-defined since if $v \neq v'$  are such that $S(u) \neq S(v)$ and $ S(u) \neq S(v')$ for all $S \in \mathcal{S}(\phi)$, then $ S(v) = S(v')$ for all $S \in \mathcal{S}(\phi)$ and thus $\gamma^v_{[\phi]} = \gamma^{v'}_{[\phi]}$ by definition of $\gamma^v_{[\phi]}$ and $\gamma^{v'}_{[\phi]}$. Hence $\sigma$ is a well-defined map. It is easy to see that $\sigma$ is also an involution since
\[
v \in \bigcap_{S \in \mathcal{S}(\phi)} \overline{S(u)}
\]
implies that for every $S \in \mathcal{S}(\phi)$, one has $S(v) = \overline{S(u)}$ that is $\overline{S(v)} = S(u)$ and thus
\[
u \in \bigcap_{S \in \mathcal{S}(\phi)} \overline{S(v)}.
\] 
Furthermore, it is easy to deduce from $S(x) \neq S(y)$ for all $S \in \mathcal{S}(\phi)$ that for $z \in X$ and $A \in U(\mathcal{S}(\phi))$, one has
\begin{equation}\label{eq:14}
| \phi_x(A) - \phi_y(A)| = | \phi_x(A) - \phi_z(A)| + | \phi_z(A) - \phi_y(A)|.
\end{equation}
Indeed, if $A = S(x)$ then both sides are equal to $\alpha_S/2$ and the same happens if $A = \overline{S(x)}$. Finally, since $\gamma^x_{[\phi]},\gamma^y_{[\phi]},\gamma^z_{[\phi]} \in [\phi]$, it follows that for every $A \in U(\mathcal{S} \setminus \mathcal{S}(\phi))$, one has:
\[
\gamma^x_{[\phi]}(A) = \gamma^z_{[\phi]}(A) = \gamma^y_{[\phi]}(A)
\]
which together with \eqref{eq:14} imply
\[
d_1(\gamma^x_{[\phi]},\gamma^y_{[\phi]}) = d_1(\gamma^x_{[\phi]},\gamma^z_{[\phi]}) + d_1(\gamma^z_{[\phi]},\gamma^y_{[\phi]}).
\]
Since $x$ and $z$ were chosen arbitrarily in $X$, it follows that $(\Gamma([\phi]),d_1)$ is antipodal. 
\end{proof}

For $\psi \in \bar{T}(\mathcal{S},\alpha)$, we know from Lemma \ref{Lem:l11}, that $\kappa(\psi) \in \mathrm{E}'(d)$. Let us then denote by $[\kappa(\psi)]$ the smallest cell of $\mathrm{E}'(d)$ containing $\kappa(\psi)$ in its relative interior.

\begin{Lem}\label{Lem:l16}
Let $(\mathcal{S},\alpha)$ be a split system pair on a set $X$ and assume that $d:=\sum_{S \in \mathcal{S}} \alpha_S \delta_S$ defines a pseudometric on $X$. Then, for every cell $[\psi]$ of $\bar{T}(\mathcal{S},\alpha)$, one has
\[
\kappa([\psi]) \subset [\kappa(\psi)].
\]
\end{Lem}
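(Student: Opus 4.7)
The plan is to identify the cell $[\kappa(\psi)]$ explicitly via the admissible graph, and then to transfer vanishing information from $\psi$ to any $\phi \in [\psi]$ using the simplification of \eqref{eq:e1} that holds on $H(\mathcal{S},\alpha)$. Recall from the introduction that the cells of $\mathrm{E}'(d) = \mathrm{E}(d)$ are precisely the sets $P(A)$ for $A \in \cA(X)$, with face relation $P(A') \subset P(A)$ iff $A \subset A'$. Since $\kappa(\psi) \in \mathrm{E}'(d)$ by Lemma~\ref{Lem:l11}, the smallest cell containing $\kappa(\psi)$ in its relative interior is $[\kappa(\psi)] = P(A(\kappa(\psi)))$. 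Consequently, to prove the claim it suffices to show that for every $\phi \in [\psi]$, one has the inclusion $A(\kappa(\psi)) \subset A(\kappa(\phi))$, as this gives $\kappa(\phi) \in H(A(\kappa(\psi))) \cap \Delta(X,d) = P(A(\kappa(\psi)))$.

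Next, I would specialize \eqref{eq:e1}, which becomes an equality on $H(\mathcal{S},\alpha)$, to the element $\psi$. Since $\psi(A),\psi(\bar A) \ge 0$ and $\psi(A)+\psi(\bar A) = \alpha_{S_A}/2$, we have $|\psi(S(x))| = \psi(S(x))$ and $|\psi(\overline{S(x)}) - \alpha_S/2| = \psi(S(x))$, so \eqref{eq:e1} reduces to
\[
\tfrac{1}{2}\bigl[\kappa(\psi)(x) + \kappa(\psi)(y) - d(x,y)\bigr] = 2 \sum_{\substack{S \in \mathcal{S} \\ S(x) = S(y)}} \psi(S(x)).
\]
In particular, $\{x,y\} \in A(\kappa(\psi))$ (allowing the loop case $x=y$, where $S(x)=S(y)$ for every $S$) if and only if $\psi(S(x)) = 0$ for every $S \in \mathcal{S}$ with $S(x) = S(y)$.

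Finally, any $\phi \in [\psi]$ satisfies $\mathrm{supp}(\phi) \subset \mathrm{supp}(\psi)$ by definition of a cell of $H(\mathcal{S},\alpha)$, so $\psi(A) = 0$ forces $\phi(A) = 0$. Applied to the sets $A = S(x)$ singled out by the previous paragraph, this immediately yields $\phi(S(x)) = 0$ for all $S$ with $S(x) = S(y)$. Since $\phi$ also lies in $H(\mathcal{S},\alpha)$, the same simplified version of \eqref{eq:e1} applies to $\phi$, and hence $\kappa(\phi)(x) + \kappa(\phi)(y) = d(x,y)$, i.e.\ $\{x,y\} \in A(\kappa(\phi))$. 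This proves $A(\kappa(\psi)) \subset A(\kappa(\phi))$ and completes the argument.

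The only delicate point is the first step, namely the identification of $[\kappa(\psi)]$ with $P(A(\kappa(\psi)))$: one must invoke the (LRC) or, more basically, the structural fact from \cite{Lan} that the cells of $\mathrm{E}(d)$ are indexed by admissible graphs with the stated face relation. Once this is granted, the remainder of the proof is the purely algebraic monotonicity observation that shrinking supports inside a hypercube cell of $\bar{T}(\mathcal{S},\alpha)$ can only create new edges of $A(\kappa(\cdot))$, never destroy them.
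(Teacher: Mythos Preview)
Your argument is correct and in fact more elementary than the paper's. Both proofs conclude by showing $A(\kappa(\psi)) \subset A(\kappa(\mu))$ for every $\mu \in [\psi]$ and then invoke that this inclusion is equivalent to $\kappa(\mu) \in [\kappa(\psi)]$; the difference is how this edge inclusion is established. You read it off directly from the equality case of \eqref{eq:e1} on $H(\mathcal{S},\alpha)$, obtaining the clean characterization $\{x,y\} \in A(\kappa(\psi)) \iff \psi(S(x)) = 0$ for all $S$ with $S(x)=S(y)$, and then just use $\mathrm{supp}(\mu) \subset \mathrm{supp}(\psi)$. The paper instead routes the computation through the gates $\gamma^x_{[\psi]}$, $\gamma^y_{[\psi]}$ and the $d_1$-metric on $\bar T(\mathcal{S},\alpha)$: starting from \eqref{eq:132} it derives the decompositions \eqref{eq:1116} and \eqref{eq:1117} (using Lemma~\ref{Lem:l14}), and combines them to get $d(x,y) = \kappa(\mu)(x)+\kappa(\mu)(y)$. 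Your route is shorter and avoids any appeal to antipodality; the paper's route has the side benefit that \eqref{eq:1116} and \eqref{eq:1117} are reused verbatim in Lemma~\ref{Lem:l18} and in the proof of Theorem~\ref{Thm:t12}.

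One minor comment on your final paragraph: the identification $[\kappa(\psi)] = \{g \in \mathrm{E}'(d) : A(\kappa(\psi)) \subset A(g)\}$ does not actually require the (LRC); it is how the paper interprets the ``smallest cell of $\mathrm{E}'(d)$'' containing $\kappa(\psi)$ (see the sentence preceding the lemma and the last line of its proof). So you can drop the caveat: once you know $\kappa(\mu) \in \mathrm{E}'(d)$ via Lemma~\ref{Lem:l11} and $A(\kappa(\psi)) \subset A(\kappa(\mu))$, you are done.
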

\begin{proof}
For each $x \in X$, there is $y \in X$ such that $\kappa(\psi)(x) + \kappa(\psi)(y) = d(x,y)$. It follows from \eqref{eq:132} that
\[
\sum_{S \in \mathcal{S} \setminus \mathcal{S}(\psi)} \alpha_S \delta_S(x,y)
= \sum_{A \in U(\mathcal{S} \setminus \mathcal{S}(\psi))} \bigl[ | \phi_x(A) - \psi(A) | + | \psi(A) - \phi_y(A) | \bigr]
\]
and from the definitions of $\gamma^x_{[\psi]},\gamma^y_{[\psi]}$ it follows that
\[
d_1(\gamma^x_{[\psi]}, \gamma^y_{[\psi]}) = \sum_{S \in \mathcal{S}(\psi)} \alpha_S \delta_S(x,y).
\]
We thus obtain using again the definitions of $\gamma^x_{[\psi]},\gamma^y_{[\psi]}$ for the last equality below:
\begin{align}
d_1(\phi_x,\phi_y) 
&= d(x,y) \nonumber \\
&= \sum_{S \in \mathcal{S} } \alpha_S \delta_S(x,y) \nonumber \\
&= \sum_{S \in \mathcal{S}(\psi) } \alpha_S \delta_S(x,y)+ \sum_{S \in \mathcal{S} \setminus \mathcal{S}(\psi)} \alpha_S \delta_S(x,y)  \nonumber \\
&= d_1(\gamma^x_{[\psi]}, \gamma^y_{[\psi]}) + \sum_{A \in U(\mathcal{S} \setminus \mathcal{S}(\psi))} \bigl[ | \phi_x(A) - \psi(A) | + | \psi(A) - \phi_y(A) | \bigr]  \nonumber \\
&= d_1(\gamma^x_{[\psi]}, \gamma^y_{[\psi]}) + d_1(\phi_x, \gamma^x_{[\psi]}) + d_1(\phi_y, \gamma^y_{[\psi]}) \label{eq:1116}.
\end{align}
Besides, using Lemma \ref{Lem:l14} we obtain for every $\mu \in [\psi]$:
\begin{align}
d_1(\gamma^x_{[\psi]}, \gamma^y_{[\psi]}) 
&= \sum_{S \in \mathcal{S}(\psi) } \alpha_S \delta_S(x,y) \nonumber \\
&= \sum_{S \in \mathcal{S}(\psi) } \alpha_S \nonumber \\
&= \sum_{S \in \mathcal{S}(\psi) } 
\begin{aligned}[t]
&\bigl[ (\mu(S(x)) - 0 ) + (\alpha_S/2 -  \mu(S(x)) ) \nonumber \\
&+ (\alpha_S/2 -  \mu(\overline{S(x)}) ) + (\mu(\overline{S(x)}) - 0 ) \bigr] 
\end{aligned} \nonumber\\
&= \sum_{S \in U(\mathcal{S}(\psi)) } \bigl[ |\phi_x(A) -\mu(A)| + |\phi_y(A) -\mu(A)| \bigr] \nonumber \\
&= d_1(\gamma^x_{[\psi]}, \mu) + d_1(\mu, \gamma^y_{[\psi]}) \label{eq:1117}
\end{align}
where the last equality follows from the fact that since $\mu,\gamma^x_{[\psi]},\gamma^y_{[\psi]} \in [\psi]$ one has for every $A \in U(\mathcal{S} \setminus \mathcal{S}(\psi))$ that $\psi(A) = \mu(A)$ as well as $\gamma^x_{[\psi]}(A) = \psi(A) = \gamma^y_{[\psi]}(A)$. We thus deduce from \eqref{eq:1116} and \eqref{eq:1117} that 
\[
d_1(\phi_x,\phi_y)  = d_1(\phi_x, \gamma^x_{[\psi]}) + d_1(\gamma^x_{[\psi]}, \mu) + d_1(\mu, \gamma^y_{[\psi]}) +  d_1(\gamma^y_{[\psi]},\phi_y).
\]
It thus follows that
\[
d(x,y) = d_1(\phi_x,\phi_y) = d_1(\phi_x, \mu) + d_1(\mu,\phi_y) = \kappa(\mu)(x) + \kappa(\mu)(y).
\]
Since $\{x,y\}$ was an arbitrary edge of $A(\kappa(\psi))$, we obtain that $A(\kappa(\psi)) \subset A(\kappa(\mu))$ and this is equivalent to $\kappa(\mu) \in [\kappa(\psi)]$. This finishes the proof.
\end{proof}

For $|X|<\infty$, it is easy to see using Lemma~\ref{Lem:FirstPartDecompositionDelta} that $\mathrm{E}'(d)=\mathrm{E}(d)$ is a subcomplex of the zonotope $\sum_{S \in \mathcal{S}} \alpha_S \mathrm{E}(\delta_S)$. Indeed, for every cell $[f]$ of $\mathrm{E}(d)$, we set $H(A(f)):= \bigcap_{\{x,y\} \in A(f)} \partial H_{\{x,y\}}$ where $H_{\{x,y\}}:=\{ g \in \R^X : g(x) + g(y) \ge d(x,y)\}$ and we have cf. \cite{Lan}:
\[
[f] = \mathrm{E}(d) \cap H(A(f)) = \Delta(d) \cap H(A(f)).
\]
It follows that every cell of $\mathrm{E}(d)$ is a face of the convex polyhedron $\Delta(d)=\sum_{S \in \mathcal{S}} \alpha_S \Delta(\delta_S)$. Thus, we can write $[f]:= \partial H \cap \Delta(d)$ for every cell $[f]$ of $\mathrm{E}(d)$, where $H \subset \R^X$ is a half-space containing $\Delta(d)$. It easily follows that $[f]= \partial H \cap \sum_{S \in \mathcal{S}} \alpha_S \mathrm{E}(\delta_S)$ and thus $[f]$ is a face of $\sum_{S \in \mathcal{S}} \alpha_S \mathrm{E}(\delta_S)$. In case $|X|=\infty$, one can make the following observations:

\begin{Rem}\label{Rem:subcomplex}
Assume $(X,d)$ has integer-valued metric, is totally split-decomposable, and satisfies the (LRC). In the proof Lemma \ref{Lem:l17} below, we only need that every cell $[f]$ of $\mathrm{E}(d)$ can be written as
\begin{equation}\label{eq:Cel0}
[f] = \sum_{S \in \mathcal{S}_{[f]}} \alpha_S \mathrm{E}(\delta_S) + \sum_{S \in \mathcal{S} \setminus \mathcal{S}_{[f]}} \alpha_S p_S
\end{equation}
where for each $S \in \mathcal{S} \setminus \mathcal{S}_{[f]}$, one has $p_S \in \{0,1\}^{X}$. To see that this holds, note first that as above in the finite case, one has:
\[
[f] = \mathrm{E}(d) \cap H(A(f)) = \Bigl( \sum_{S \in \mathcal{S}} \alpha_S \mathrm{E}(\delta_S) \Bigr) \cap H(A(f)).
\]
Set 
\[
Z_{\{x,y\}}:= \Bigl( \sum_{S \in \mathcal{S}} \alpha_S \mathrm{E}(\delta_S) \Bigr) \cap \partial H_{\{x,y\}}
\]
where as above
\[
H_{\{x,y\}} = \Bigl \{ g \in \R^X : g(x) + g(y) \ge d(x,y) = \sum_{ \substack{S \in \mathcal{S} \\ S(x) \neq S(y)} } \alpha_S \Bigr \}.
\]
It is then easy to see that by definition of the sets $\mathrm{E}(\delta_S)$ and since one has a decomposition $f = \sum_{S \in \mathcal{S}} \alpha_S f_S$ with $f_S \in \mathrm{E}(\delta_S)$ for every $S \in \mathcal{S}$, it follows that:
\[
Z_{\{x,y\}} = \sum_{\substack{S \in \mathcal{S} \\ S(x) \neq S(y)}} \alpha_S \mathrm{E}(\delta_S) + \sum_{\substack{S \in \mathcal{S} \\ S(x) = S(y)}} \alpha_S p_S
\]
where $p_S|_{S(x)} \equiv 0$ and $p_S|_{\overline{S(x)}} \equiv 1$ and additionally, for every $\sum_{S \in \mathcal{S}} \alpha_S g_S \in Z_{\{x,y\}}$, it follows that $g_S=p_S$ for $S \in \mathcal{S}$ so that $S(x) = S(y)$. In other words, one has the stronger property:
\begin{align*}
Z_{\{x,y\}} = \Bigl( \sum_{S \in \mathcal{S}} \alpha_S \mathrm{E}(\delta_S) \Bigr) \setminus \Bigl \{ &\sum_{S \in \mathcal{S}} \alpha_S g_S \in \sum_{S \in \mathcal{S}} \alpha_S \mathrm{E}(\delta_S) : \\
&\exists S \in \mathcal{S} \text{ such that } S(x) = S(y) \text{ and } g_S \neq p_S  \Bigr \}.
\end{align*}
Taking repeatedly intersections of $Z_{\{x,y\}}$ with sets of the form $\partial H_{\{x',y'\}}$ where $\{x',y'\} \in A(f)$, we obtain after finitely many steps (the set $\{S \in \mathcal{S}: S(x) \neq S(y) \}$ being finite):
\begin{equation}\label{eq:Cel1}
[f] = \Bigl( \sum_{S \in \mathcal{S}} \alpha_S \mathrm{E}(\delta_S) \Bigr) \cap \bigcap_{\{x,y\} \in A(f)} \partial H_{\{x,y\}} = \sum_{\substack{S \in \mathcal{S}: \\ \forall \{x,y\} \in A(f) \\ S(x) \neq S(y)} } \alpha_S \mathrm{E}(\delta_S) + \sum_{\substack{S \in \mathcal{S} \\ \exists \{x,y\} \in A(f): \\ S(x) = S(y)}} \alpha_S p_S
\end{equation}
where $p_S|_{S(x)} \equiv 0$ and $p_S|_{\overline{S(x)}} \equiv 1$ for $\{x,y\} \in A(f)$ satisfying $S(x) = S(y)$. As before, the following stronger property holds:
\begin{align}\label{eq:Cel2}
[f] = \Bigl( \sum_{S \in \mathcal{S}} \alpha_S \mathrm{E}(\delta_S) \Bigr)
\setminus \bigcup_{\{x,y\} \in A(f)} \Bigl \{ &\sum_{S \in \mathcal{S}} \alpha_S g_S \in \sum_{S \in \mathcal{S}} \alpha_S \mathrm{E}(\delta_S) : \nonumber \\
&\exists S \in \mathcal{S} \text{ such that } S(x) = S(y) \text{ and } g_S \neq p_S  \Bigr \}.
\end{align}
\end{Rem}

We go on with a more concrete description of the representation of the cells of $\mathrm{E}(X,d)$ in the case where each of them is a combinatorial hypercube.

\begin{Rem}\label{Rem:cellsdimension}
It is not difficult to see if $(X,d)$ is as in Remark \ref{Rem:subcomplex} and if every cell $[f]$ of $\mathrm{E}(X,d)$ is a combinatorial hypercube, then the representation \eqref{eq:Cel0}
verifies 
\begin{equation}\label{eq:CelDim}
|\mathcal{S}_{[f]}|=\mathrm{dim}([f]).
\end{equation}
Indeed, for every $1$-cell $[g]$ of $\mathrm{E}(X,d)$, one can represent $[g]$ as in  \eqref{eq:Cel0}, namely: 
\[
[g]=\sum_{S \in \mathcal{S}_{[g]}} \alpha_S \mathrm{E}(\delta_S) + \sum_{S \in \mathcal{S} \setminus \mathcal{S}_{[g]}} \alpha_S p^g_S.
\]
Now, note that the affine hull $\mathrm{aff}([g])$ is a $1$-dimensional affine subspace of $\R^X$ which contains for every $S \in \mathcal{S}_{[g]}$, a translate of $\mathrm{aff}(\mathrm{E}(\delta_S))$. Hence, if $|\mathcal{S}_{[g]}| \ge 2$, then for $S,S' \in \mathcal{S}_{[g]}$ with $S' \neq S''$, $\mathrm{aff}(\mathrm{E}(\delta_{S'}))$ and $\mathrm{aff}(\mathrm{E}(\delta_{S''}))$ have colinear directional vectors. This is however impossible since for any $S \in \mathcal{S}$, the directional vector of $\mathrm{E}(\delta_S)$ with $S=\{A,B\}$ is a scalar multiple of the function $h \in \R^X$ that satisfies $h|_A \equiv 1$ and $h|_B \equiv -1$.

It is easy to see by induction that if $Z$ is a zonotope combinatorially equivalent to an $n$-hypercube, then for any vertex $z$ of $Z$, it follows that $Z$ is the Minkowski sum of all its edges incident to $z$. Indeed, assume $z$ is any vertex of $Z$. By the combinatorial $n$-hypercube equivalence of $Z$, $z$ is incident to exactly $n$ different edges. All facets of $Z$ incident to $z$ are again zonotopes and are combinatorially equivalent to $(n-1)$-hypercubes. There are $n$ such facets and each of them is by induction the sum of $n-1$ edges among those $n$ edges incident to $z$. Since $Z$ is a zonotope, it is centrally symmetric. Thus, the symmetric image of each facet of $Z$ incident to $z$ is again a facet of $Z$ that can be written as a sum of edges. Hence, there are $2n$ facets of $Z$ that can be written as a Minkowski sum of edges of $Z$. Since $Z$ must have exactly $2n$ facets, the result follows. 

Now, since every cell $[f]$ is a combinatorial hypercube as well as a zonotope, it is thus equal to the Minkowski sum of its edges which are in turn $1$-cells of $\mathrm{E}(X,d)$. It follows that for $i \in \{1,\dots,\mathrm{dim}([f])\}$, we can pick  $1$-cells $[f_i]$ of $\mathrm{E}(X,d)$, so that they all intersect in the vertex $f_0$ of $[f]$, and we can write:
\[
[f] = f_0 + \sum_{i \in \{1,\dots,\mathrm{dim}([f])\}} ([f_i] - f_0)
\]
Using the representation \eqref{eq:Cel0} for $[f]$ and for each $[f_i]$, we can thus write:
\[
[f] = f_0 + \sum_{i \in \{1,\dots,\mathrm{dim}([f])\}} \left( \alpha_{S_i} \mathrm{E}(\delta_{S_i}) + \sum_{S \in \mathcal{S} \setminus \{ S_i\} } \alpha_S p_S - f_0\right).
\]
Since $f_0 = \sum_{S \in \mathcal{S} } \alpha_S p_S = \sum_{S \in \mathcal{S} \setminus\{ S_1,\dots,S_{\mathrm{dim}([f])} \}} \alpha_S p_S + \sum_{i \in \{1,\dots,\mathrm{dim}([f])\}} \alpha_{S_i} p_{S_i} $, it follows:
\begin{align*}
[f] 
&= f_0 + \sum_{i \in \{1,\dots,\mathrm{dim}([f])\}} \bigl( \alpha_{S_i} \mathrm{E}(\delta_{S_i}) - \alpha_{S_i} p_{S_i} \bigr) \\
&= \sum_{S \in \mathcal{S} \setminus\{ S_1,\dots,S_{\mathrm{dim}([f])} \}} \alpha_S p_S + \sum_{i \in \{1,\dots,\mathrm{dim}([f])\}} \alpha_{S_i} \mathrm{E}(\delta_{S_i}),
\end{align*}
which implies that $\mathcal{S}_{[f]}=\{S_i\}_{i \in \{1,\dots,\mathrm{dim}([f])\}}$ since it is already clear that $\{S_i\}_{i \in \{1,\dots,\mathrm{dim}([f])\}} \subset \mathcal{S}_{[f]}$ and thus they must be equal by the above (i.e., in $\mathrm{aff}(P+P')$, if $P + P' = P$, then $P'=\{0\}$).
\end{Rem}

\begin{Rem}\label{Rem:CellForm}
If $(X,d)$ is again as in Remark \ref{Rem:subcomplex} and if every cell $[f]$ of $\mathrm{E}(X,d)$ is a combinatorial hypercube, it is easy to see that if $f =  \sum_{S \in \mathcal{S} } \alpha_S f_S$ as usual with $\alpha_S>0$ and $f_S \in \mathrm{E}(\delta_S)$ and setting
\[
\mathcal{S}_f := \{ S \in \mathcal{S} : f_S(X) \subset (0,1) \},
\]
then
\[
[f] = \sum_{S \in \mathcal{S}_f} \alpha_S \mathrm{E}(\delta_S) + \sum_{S \in \mathcal{S} \setminus \mathcal{S}_f} \alpha_S p_S,
\]
i.e. $\mathcal{S}_{[f]}=\mathcal{S}_f$. Let $[f]$ be given by its representation as in \eqref{eq:Cel0}. Note first that for any $S \in \mathcal{S}_f$ and for any pair of points $x,y \in X$ such that $f(x) + f(y)=d(x,y)$, one has equivalently
\[
\sum_{S \in \mathcal{S} } \alpha_S f_S(x) + \sum_{S \in \mathcal{S} } \alpha_S f_S(y) = \sum_{S \in \mathcal{S} } \alpha_S \delta_S(x,y),
\]
hence one necessarily has $S(x) \neq S(y)$ and thus by \eqref{eq:Cel1}, it follows that $\mathcal{S}_f \subset \mathcal{S}_{[f]}$. Now, for the other inclusion, assume that $\mathcal{S}_f \subsetneq \mathcal{S}_{[f]}$. Since $[f]$ is a hypercube, it follows that
\[
\sum_{S \in \mathcal{S}_f} \alpha_S \mathrm{E}(\delta_S) + \sum_{S \in \mathcal{S} \setminus \mathcal{S}_f} \alpha_S p_S
\]
is a strict subcell of $[f]$ containing $f$ and this contradicts the definition of $[f]$.
\end{Rem}

\begin{Rem}\label{Rem:kappainverse} 
For $(X,d)$ as in Remark \ref{Rem:subcomplex}, let us define the map 
\[
\lambda \colon \mathrm{E}(X,d) \to \bar{T}(\mathcal{S},\alpha)
\]
by the assignement $f \mapsto \psi_f$ where $\psi_f$ (as defined in the proof of Lemma \ref{Lem:l12}) is depending on a choice of a representation $\sum_{S \in \mathcal{S}}\alpha_S f_S$ for $f$, and this choice is not unique in general. Furthermore, note that one always has $\kappa \circ \lambda = \mathrm{id}_{\mathrm{E}(X,d)}$.  It follows that $\kappa$ is surjective. In general however, $\lambda \circ \kappa  \neq \mathrm{id}_{\bar{T}(\mathcal{S},\alpha)}$. 
\end{Rem}

We go on with a more concrete description of the maps $\kappa$ and $\lambda$:

\begin{Rem}\label{Rem:inducedcell}
Again, if $(X,d)$ is as in Remark \ref{Rem:subcomplex} and if every cell $[f]$ of $\mathrm{E}(X,d)$ is a combinatorial hypercube, note that $\kappa \colon \bar{T}(\mathcal{S},\alpha) \to \mathrm{E}(X,d)$ is given by 
\begin{equation}\label{eq:Defkappa}
\psi \mapsto \left( \kappa(\psi) : x \mapsto  \sum_{S \in \mathcal{S}} 2 \psi(S(x)) =  \sum_{S \in \mathcal{S}} \alpha_S \frac{ \psi(S(x))}{\alpha_S/2} =:
\sum_{S \in \mathcal{S}} \alpha_S f_S(x) \right)
\end{equation}
and $\lambda \colon \mathrm{E}(X,d) \to \bar{T}(\mathcal{S},\alpha)$ is given by $f \mapsto \psi_f$ where recall that $\psi_f$ is given for any $A \in U(\mathcal{S})$ and for an arbitrarily chosen $x \in A$ by 
\[
\psi_f : A \mapsto \frac{\alpha_{S_A}}{2} f_{S_A}(x).
\]
By Remark \ref{Rem:CellForm}, it follows that one has:
\[
[\kappa(\psi)]=
\sum_{S \in \mathcal{S}_{\kappa(\psi)}} \alpha_S \mathrm{E}(\delta_S) + \sum_{S \in \mathcal{S} \setminus \mathcal{S}_{\kappa(\psi)}} \alpha_S p^{\kappa(\psi)}_S
\]
and thus:
\begin{enumerate}[$(a)$]
\item $\lambda([\kappa(\psi)])=[\psi]$,
\item moreover,
\[
[\kappa(\psi)] = (\kappa \circ \lambda)([\kappa(\psi)]) = \kappa([\psi])
\]
where the first equality was already noted in Remark \ref{Rem:kappainverse} and the second equality follows from $(a)$.
\item To see that $\kappa$ is injective, assume $g:=\kappa(\psi)=\kappa(\psi')=:g'$. Then, in particular, one has $[g]=[g']$. Thus, by \eqref{eq:Cel2} one has $\mathcal{S}_g=\mathcal{S}_{g'}$ as well as $p^g_S=p^{g'}_S$ for each $S \in \mathcal{S} \setminus \mathcal{S}_g$. It follows that $\sum_{S \in \mathcal{S}_{g}} \alpha_S g_S= \sum_{S \in \mathcal{S}_{g'}} \alpha_S g_S'$. Since  $\sum_{S \in \mathcal{S}_g} \alpha_S \mathrm{E}(\delta_S)$ is a finite dimensional zonotope combinatorially equivalent to a hypercube, it follows that $g_S=g_S'$ for every  $S \in \mathcal{S}_{g}$ and thus by \eqref{eq:Defkappa}, it follows $\psi=\psi'$. Hence together with Remark \ref{Rem:kappainverse}, it follows that $\kappa$ is bijective with inverse $\lambda$.
\item In addition:
\[
\lambda([f]) = \lambda( [(\kappa \circ \lambda)(f)]) 
= (\lambda \circ \kappa)[ \lambda(f)]
= [ \lambda(f)]
\]
where the first equality was already noted in Remark \ref{Rem:kappainverse}, the second equality follows from $(b)$ and the last equality follows from $(c)$.
\item By Remarks~\ref{Rem:cellsdimension} and \ref{Rem:CellForm}, we have that $\mathrm{dim}([f]) = k$ implies $\mathrm{dim}([\lambda(f)]) = k$.
\item Finally, if $\mathrm{dim}([\psi]) = k$, then $\mathrm{dim}([\kappa(\psi)]) = k$ since if we assume that $\mathrm{dim}([\kappa(\psi)]) > k$, it then follows as in $(e)$ from Remarks~\ref{Rem:cellsdimension} and \ref{Rem:CellForm} that $\mathrm{dim}(\lambda([\kappa(\psi)])) > k$ and by $(a)$ it follows that $\mathrm{dim}([\psi]) > k$ which is a contradiction.
\end{enumerate}
Hence, the map 
\[
\kappa \colon \bar{T}(\mathcal{S},\alpha) \to \mathrm{E}(X,d)
\]
defines a bijection as well as an isomorphism of cell complexes.

\end{Rem}

In view of Remark \ref{Rem:subcomplex}, we have:

\begin{Lem}\label{Lem:l17}
Let $(X,d)$ be a totally split-decomposable metric space with integer-valued metric which satisfies the (LRC). For every cell $[f]$ of $\mathrm{E}(d)$, there is $\bar{f} \in [f]$ such that $[\bar{f}] = [f]$ and so that for $\psi_{\bar{f}} \in \bar{T}(S,\alpha)$ as defined in the proof of Lemma~\ref{Lem:l12}, one has
\[
[\kappa(\psi_{\bar{f}})] \subset \kappa([\psi_{\bar{f}}]).
\]
\end{Lem}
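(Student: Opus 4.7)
The plan is to construct a point $\bar{f}$ in the relative interior of $[f]$ together with a specific decomposition so that the associated $\psi_{\bar{f}}$ has the largest possible support, and then to produce, for every $g \in [f] = [\kappa(\psi_{\bar{f}})]$, an explicit preimage $\mu \in [\psi_{\bar{f}}]$ with $\kappa(\mu) = g$. This is exactly the reverse of the inclusion proved in Lemma~\ref{Lem:l16}, specialized to this particular $\bar{f}$.

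First I would apply Remark~\ref{Rem:subcomplex} to represent
\[
[f] = \sum_{S \in \mathcal{S}_{[f]}} \alpha_S \mathrm{E}(\delta_S) + \sum_{S \in \mathcal{S} \setminus \mathcal{S}_{[f]}} \alpha_S p_S,
\]
and observe the key fact $|\mathcal{S}_{[f]}| < \infty$. Indeed, for $S = \{A,B\}$ the segment $\mathrm{E}(\delta_S)$ has direction given by the function that is $+1$ on $A$ and $-1$ on $B$, and these directions are non-parallel for distinct splits. Since $[f]$ is finite-dimensional by the (LRC), only finitely many splits can contribute non-trivially to this Minkowski sum.

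Using this finiteness, I would choose for each $S \in \mathcal{S}_{[f]}$ a function $\bar{f}_S$ in the relative interior of $\mathrm{E}(\delta_S)$ (e.g. $\bar{f}_S \equiv 1/2$), and set
\[
\bar{f} := \sum_{S \in \mathcal{S}_{[f]}} \alpha_S \bar{f}_S + \sum_{S \in \mathcal{S} \setminus \mathcal{S}_{[f]}} \alpha_S p_S.
\]
Both sums are finite at every $x \in X$ (the first because $|\mathcal{S}_{[f]}| < \infty$, and the second because $\sum_S \alpha_S p_S$ is a vertex of $[f]$, hence a bona fide element of $\mathbb{R}^X$). By construction $\bar{f}$ lies in the relative interior of $[f]$, so $[\bar{f}] = [f]$ as required, and I then define $\psi_{\bar{f}}$ via the formula in the proof of Lemma~\ref{Lem:l12} applied to this specific decomposition. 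A direct verification shows that $\psi_{\bar{f}}(A), \psi_{\bar{f}}(\bar{A}) > 0$ for every $S = \{A,\bar{A}\} \in \mathcal{S}_{[f]}$ (because $\bar{f}_S$ takes values in $(0,1)$), while for $S \notin \mathcal{S}_{[f]}$ exactly one side of $S$ belongs to $\mathrm{supp}(\psi_{\bar{f}})$, namely the one on which $p_S$ equals $1$. In particular $\mathcal{S}(\psi_{\bar{f}}) = \mathcal{S}_{[f]}$.

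To conclude, I take any $g \in [f]$ and use the representation of $[f]$ to write $g = \sum_{S \in \mathcal{S}_{[f]}} \alpha_S g_S + \sum_{S \notin \mathcal{S}_{[f]}} \alpha_S p_S$ with $g_S \in \mathrm{E}(\delta_S)$. I then define $\mu \colon U(\mathcal{S}) \to [0,\infty)$ by $\mu(A) := \frac{\alpha_{S_A}}{2} g_{S_A}(x)$ for any $x \in A$, with the convention $g_S := p_S$ for $S \notin \mathcal{S}_{[f]}$. The same computation as in the proof of Lemma~\ref{Lem:l12} gives $\kappa(\mu) = g$, and since $g \in \mathrm{E}(d) = \mathrm{E}'(d)$, Lemma~\ref{Lem:l11} yields $\mu \in \bar{T}(\mathcal{S},\alpha)$. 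On the splits $S \in \mathcal{S}_{[f]}$ the inclusion $\mathrm{supp}(\mu) \cap U(S) \subset \mathrm{supp}(\psi_{\bar{f}}) \cap U(S)$ is automatic since $\psi_{\bar{f}}$ is positive on both sides of such an $S$, while on the splits $S \notin \mathcal{S}_{[f]}$ the values of $\mu$ and $\psi_{\bar{f}}$ literally coincide because $g_S = p_S = \bar{f}_S$. Hence $\mu \in [\psi_{\bar{f}}]$ and $g = \kappa(\mu) \in \kappa([\psi_{\bar{f}}])$. Since $g$ was arbitrary in $[\kappa(\psi_{\bar{f}})] = [\bar{f}] = [f]$, the desired inclusion follows.

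The main obstacle is the finiteness $|\mathcal{S}_{[f]}| < \infty$: without it the naive choice $\bar{f}_S \equiv 1/2$ would not produce a finite-valued function, and one would need a more delicate argument to realize the relative interior of $[f]$ by a decomposition that is simultaneously interior in every summand. Once this finiteness is established, both the construction of $\bar{f}$ and the lifting of $g$ to $\mu$ reduce to routine bookkeeping with the supports.
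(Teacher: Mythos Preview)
Your argument is correct and follows essentially the same route as the paper. You use the representation of $[f]$ from Remark~\ref{Rem:subcomplex}, define $\bar f$ by taking $\bar f_S \equiv 1/2$ on $\mathcal{S}_{[f]}$, and then for each $g \in [f]$ lift to the element $\mu = \psi_g \in [\psi_{\bar f}]$; the paper does exactly this, with your $\mu$ being what it calls $\psi_g$. The one point you make more explicit than the paper is the finiteness $|\mathcal{S}_{[f]}| < \infty$, which you correctly flag as the only genuine obstacle; your justification via non-parallel edge directions in a finite polytope is valid (and one could alternatively note that $\mathcal{S}_{[f]} \subset \{S : S(x) \neq S(y)\}$ for any fixed $\{x,y\} \in A(f)$, which is finite by Lemma~\ref{Lem:FinitelyManySeparatingSplits}).
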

\begin{proof}
By the above remark, there is $\mathcal{S}_{[f]} \subset \mathcal{S}$ such that
\[
[f] = \sum_{S \in \mathcal{S}_{[f]}} \alpha_S \mathrm{E}(\delta_S) + \sum_{S \in \mathcal{S} \setminus \mathcal{S}_{[f]}} \alpha_S p_S
\]
where for each $S \in \mathcal{S} \setminus \mathcal{S}_{[f]}$, one has $p_S \in \{0,1\}^{X}$. We can thus write
\[
f = \sum_{S \in \mathcal{S}_{[f]}} \alpha_S f_S + \sum_{S \in \mathcal{S} \setminus \mathcal{S}_{[f]}} \alpha_S p_S
\]
where for every $S \in \mathcal{S}$, one has $f_S \in \mathrm{E}(\delta_S)$. Let us moreover define
\[
\mathcal{S}_f := \{ S \in \mathcal{S} : f_S(X) \subset (0,1) \}.
\]
It is clear that $\mathcal{S}_f \subset \mathcal{S}_{[f]}$. Moreover, $\mathcal{S}(\psi_f) = \mathcal{S}_f$ for $\psi_f$ defined as in the proof of Lemma \ref{Lem:l12}. Let us now define $\bar{f} \in [f]$ as
\[
\bar{f} := \sum_{S \in \mathcal{S}_{[f]}} \alpha_S \bar{f}_S + \sum_{S \in \mathcal{S} \setminus \mathcal{S}_{[f]}} \alpha_S p_S
\]
such that for any $S \in \mathcal{S}_{[f]}$, $\bar{f}_S$ is constantly equal to $1/2$ on $X$. It is then clear that $\mathcal{S}_{\bar{f}} = \mathcal{S}_{[f]}$. Since $\mathcal{S}(\psi_{\bar{f}}) = \mathcal{S}_{\bar{f}} = \mathcal{S}_{[f]} \supset \mathcal{S}_f = \mathcal{S}(\psi_{f}) $ and because for every $A \in U(\mathcal{S} \setminus \mathcal{S}(\psi_{\bar{f}}))= U(\mathcal{S} \setminus \mathcal{S}_{[f]})$, one has
\[
\psi_{\bar{f}}(A) = \psi_{f}(A),
\]
it follows that $\mathrm{supp}(\psi_f) \subset \mathrm{supp}(\psi_{\bar{f}})$ and thus $\psi_f \in [\psi_{\bar{f}}]$. Now for any $g \in [f]$, one similarly has
\[
\mathcal{S}(\psi_{g}) = \mathcal{S}_{g} \subset \mathcal{S}_{[f]} = \mathcal{S}(\psi_{\bar{f}})
\]
as well as for every $A \in U(\mathcal{S} \setminus \mathcal{S}_{[f]})=U(\mathcal{S} \setminus \mathcal{S}(\psi_{\bar{f}}))$
\[
\psi_{g}(A) = \psi_{\bar{f}}(A).
\]
Thus, $\mathrm{supp}(\psi_g) \subset \mathrm{supp}(\psi_{\bar{f}})$ and thus $\psi_g \in [\psi_{\bar{f}}]$, that is $g = \kappa(\psi_{g}) \in \kappa([\psi_{\bar{f}}])$. It follows that
\[
[\kappa(\psi_{\bar{f}})] =  [\bar{f}] = [f] \subset \kappa([\psi_{\bar{f}}]),
\]
which finishes the proof.
\end{proof}
Consider the isometric embedding $\mathrm{e} \colon X \to \mathrm{E}(d)$ given by
\[
x \mapsto ( d_x \colon y \mapsto d(x,y) )
\]
where $\mathrm{E}(d)$ is endowed with the metric $d_{\infty}(f,g) := \|f - g\|_\infty$. We say that $\mathrm{E}(d)$ is \textit{cell-decomposable} if every cell $C$ of $\mathrm{E}(d)$ is $X$-gated (cf. Definition \ref{Def:d1}). Now, we have the following:

\begin{Lem}\label{Lem:l18}
Let $(X,d)$ be a totally split-decomposable metric space with integer-valued metric satisfying the (LRC). Then, $\mathrm{E}(d)$ is cell-decomposable.
\end{Lem}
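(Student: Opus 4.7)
The plan is to reduce $X$-gatedness of an arbitrary cell $[f]$ of $(\mathrm{E}(d), d_\infty)$ to $X$-gatedness of a corresponding cell of $(\bar{T}(\mathcal{S},\alpha), d_1)$ via the map $\kappa$, which is a bijective combinatorial cell-complex isomorphism but only $1$-Lipschitz between the two differently-metrized complexes. Given a cell $[f]$, first apply Lemma~\ref{Lem:l17} to produce $\bar{f} \in [f]$ with $[\bar{f}] = [f]$ and $[\kappa(\psi_{\bar{f}})] \subset \kappa([\psi_{\bar{f}}])$. Combining this inclusion with the reverse inclusion $\kappa([\psi_{\bar{f}}]) \subset [\kappa(\psi_{\bar{f}})]$ from Lemma~\ref{Lem:l16} yields
\[
[f] = [\bar{f}] = [\kappa(\psi_{\bar{f}})] = \kappa([\psi_{\bar{f}}]).
\]

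Next, invoke Lemma~\ref{Lem:l13} to obtain, for each $x \in X$, a gate $\gamma^x := \gamma^x_{[\psi_{\bar{f}}]} \in [\psi_{\bar{f}}]$ for $\phi_x$ with respect to $d_1$. The claim is that $\kappa(\gamma^x) \in [f]$ is then a gate for $\e(x) = d_x$ in $[f]$ with respect to $d_\infty$. Since every $h \in \mathrm{E}(d) \subset \Delta_1(X,d)$ satisfies $\|h - d_x\|_\infty = h(x)$ by \eqref{eq:f-d}, the required gate identity
\[
d_\infty(d_x, \kappa(\psi)) = d_\infty(d_x, \kappa(\gamma^x)) + d_\infty(\kappa(\gamma^x), \kappa(\psi)), \quad \psi \in [\psi_{\bar{f}}],
\]
reduces to $\kappa(\psi)(x) = \kappa(\gamma^x)(x) + \|\kappa(\gamma^x) - \kappa(\psi)\|_\infty$.

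To establish this, I would use that the $d_1$-gate property of $\gamma^x$, combined with the identity $\kappa(\cdot)(x) = d_1(\cdot,\phi_x)$, gives
\[
\kappa(\psi)(x) - \kappa(\gamma^x)(x) = d_1(\phi_x,\psi) - d_1(\phi_x,\gamma^x) = d_1(\gamma^x,\psi).
\]
On one hand $\|\kappa(\gamma^x) - \kappa(\psi)\|_\infty \ge |\kappa(\psi)(x) - \kappa(\gamma^x)(x)| = d_1(\gamma^x,\psi)$, while on the other hand the $1$-Lipschitz property of $\kappa$ (noted as $(iii)$ before Lemma~\ref{Lem:l11}) gives $\|\kappa(\gamma^x) - \kappa(\psi)\|_\infty \le d_1(\gamma^x,\psi)$. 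Equality holds throughout, and the gate identity follows.

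The main conceptual obstacle is precisely the metric mismatch between the $\ell^1$-style metric $d_1$ on the Buneman-type complex and the $\ell^\infty$-style metric $d_\infty$ on the injective hull, so that $\kappa$ is far from an isometry on cells; what saves the argument is that the supremum defining $\|\kappa(\gamma^x) - \kappa(\psi)\|_\infty$ is automatically attained at the coordinate $x$ thanks to the $d_1$-gate equality, which pinches the $1$-Lipschitz inequality to an equality exactly where needed.
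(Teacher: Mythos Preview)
Your argument is correct and uses the same scaffolding as the paper: Lemmas~\ref{Lem:l16} and~\ref{Lem:l17} to identify $[f]=\kappa([\psi_{\bar f}])$, Lemma~\ref{Lem:l13} to produce the $d_1$-gate $\gamma^x$, and the $1$-Lipschitz property of $\kappa$ to transfer the gate relation to $d_\infty$. The difference is only in the final step. The paper picks an antipodal $y$ with $\{x,y\}\in A(\bar f)$, writes the four-term $d_1$-chain from $\phi_x$ to $\phi_y$ through $\gamma^x,\psi,\gamma^y$, applies $\kappa$ and the $1$-Lipschitz bound to the whole chain, and then uses $\|d_x-d_y\|_\infty=d(x,y)$ to force all inequalities to equalities simultaneously. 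You instead evaluate at the single coordinate $x$, using the identity $\kappa(\mu)(x)=d_1(\mu,\phi_x)$ to see directly that the sup in $\|\kappa(\gamma^x)-\kappa(\psi)\|_\infty$ is attained at $x$ with value $d_1(\gamma^x,\psi)$, which immediately matches the $1$-Lipschitz upper bound. Your route is shorter and avoids introducing $y$ altogether; the paper's route has the minor byproduct of exhibiting the antipodal structure of the gates at the same time, which it reuses later (e.g.\ in Lemma~\ref{Lem:l199}), but for the bare statement of Lemma~\ref{Lem:l18} your argument is the cleaner one.
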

\begin{proof}
Let $C$ be a cell of $\mathrm{E}(d)=\mathrm{E}'(d)$. By Lemma \ref{Lem:l16} and \ref{Lem:l17}, there is $\bar{f} \in C$ such that $[\bar{f}] = C$ and such that $\psi_{\bar{f}} \in \bar{T}(S,\alpha)$ (as defined in the proof of Lemma \ref{Lem:l12}) satisfies
\[
C = [\bar{f}] = [\kappa(\psi_{\bar{f}})] = \kappa([\psi_{\bar{f}}]).
\]
as well as for every $g \in [\bar{f}]$:
\begin{equation}\label{eq:e117}
\mathrm{supp}(\psi_{g}) \subset \mathrm{supp}(\psi_{\bar{f}}).
\end{equation}
Let $x \in X$ be chosen arbitrarily. We want to show that 
\[
\kappa(\gamma^x_{[\psi_{\bar{f}}]}) \text{ is a gate for }d_x \text{ in }[\bar{f}].
\]
For an arbitrarily chosen $f \in [\bar{f}]$, let us set $\psi:=\psi_{f}$ which by \eqref{eq:e117} satisfies $\psi_{f} \in [\psi_{\bar{f}}]$. Now, by Lemma \ref{Lem:l11}, there must exist $y \in X$ such that
\[
\kappa(\psi_{\bar{f}})(x) + \kappa(\psi_{\bar{f}})(y) = \bar{f}(x) + \bar{f}(y) = d(x,y).
\]
Since $\kappa(\psi_{f}) = f \in [\bar{f}]$, one has $A(\bar{f}) \subset A(f)$, cf. \eqref{eq:e000000} and thus $\kappa(\psi)(x) + \kappa(\psi)(y) = d(x,y)$, hence
\begin{align}
d_1(\phi_x,\phi_y) \nonumber
&= d(x,y) \\ \nonumber
&= \kappa(\psi)(x) + \kappa(\psi)(y) \\ \nonumber
&= d_1(\phi_x,\psi)  + d_1(\psi,\phi_y)  \\ 
&= d_1(\phi_x, \gamma^x_{[\psi_{\bar{f}}]}) + d_1(\gamma^x_{[\psi_{\bar{f}}]}, \psi) + d_1(\psi,\gamma^y_{[\psi_{\bar{f}}]}) +  d_1(\gamma^y_{[\psi_{\bar{f}}]},\phi_y) \label{eq:e17}
\end{align}
and since $\kappa$ is $1$-Lipschitz, it follows that
\begin{align}
\left\| \kappa(\phi_x) - \kappa(\phi_y) \right\|_{\infty} 
&\le  
\begin{aligned}[t]
&\left\| \kappa(\phi_x) - \kappa(\gamma^x_{[\psi_{\bar{f}}]}) \right\|_{\infty}  + \left\|\kappa(\gamma^x_{[\psi_{\bar{f}}]}) - \kappa(\psi) \right\|_{\infty} \\ \nonumber
&+ \left\| \kappa(\psi) - \kappa(\gamma^y_{[\psi_{\bar{f}}]}) \right\|_{\infty}  + \left\| \kappa(\gamma^y_{[\psi_{\bar{f}}]}) - \kappa(\phi_y) \right\|_{\infty} 
\end{aligned}\\
&\le
\begin{aligned}[t]
&d_1(\phi_x, \gamma^x_{[\psi_{\bar{f}}]}) + d_1(\gamma^x_{[\psi_{\bar{f}}]}, \psi) \\ \nonumber
&+ d_1(\psi,\gamma^y_{[\psi_{\bar{f}}]}) +  d_1(\gamma^y_{[\psi_{\bar{f}}]},\phi_y)
\end{aligned}\\
&= d(x,y).\label{eq:e18}
\end{align}
It is easy to see that $\kappa(\phi_x) = d_x$ and $ \kappa(\phi_y)=d_y$ as well as $\left\| d_x - d_y \right\|_{\infty}= d(x,y)$ which implies that both inequalities above are actual equalities. Since $\kappa(\psi_{f}) = f$, we thus obtain
\begin{align*}
\left\| d_x - d_y \right\|_{\infty} 
&=  
\begin{aligned}[t]
&\left\| d_x - \kappa(\gamma^x_{[\psi_{\bar{f}}]}) \right\|_{\infty}  + \left\|\kappa(\gamma^x_{[\psi_{\bar{f}}]}) - f \right\|_{\infty} \\
&+ \left\| f - \kappa(\gamma^y_{[\psi_{\bar{f}}]}) \right\|_{\infty}  + \left\| \kappa(\gamma^y_{[\psi_{\bar{f}}]}) - d_y \right\|_{\infty}.
\end{aligned}
\end{align*}
In particular
\[
\left\| d_x - \kappa(\gamma^x_{[\psi_{\bar{f}}]}) \right\|_{\infty}  + \left\|\kappa(\gamma^x_{[\psi_{\bar{f}}]}) - f \right\|_{\infty}
= \left\| d_x - f \right\|_{\infty} 
\]
and this proves that $\kappa(\gamma^x_{[\psi_{\bar{f}}]})$ is a gate for $d_x$ in $[\bar{f}]$. This is the desired result.
\end{proof}

It is easy to see that \cite[Theorem~1.1]{HubKMII} generalizes to the case where $|X|=\infty$ as long as $\mathrm{E}'(X,d)=\mathrm{E}(X,d)$. To be self-contained, we give a proof of the theorem.

\begin{Thm}\label{Thm:AntipodalGateSetsCells}
Let $(X,d)$ be a (possibly) infinite metric space with integer-valued metric satisfying the (LRC). If $f \in \mathrm{E}(X,d)$ is such that $[f]$ is $X$-gated, then the following hold:
\begin{enumerate}[(i)]
\item $(G([f]),d_{\infty})$ is an antipodal metric space.
\item The map $\Phi \colon ([f],d_{\infty}) \to \mathrm{E}(G([f]),d_{\infty})$ given by 
\begin{equation}\label{eq:Definition1Phi}
g \mapsto \left(\gamma^x_{[f]} \mapsto  g(x) - \gamma^x_{[f]}(x) \right)
\end{equation}
is a bijective isometry as well as an isomorphism of polytopes.
\end{enumerate}
\end{Thm}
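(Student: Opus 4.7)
My plan is to exploit two basic gate formulas throughout: for every $g \in [f]$ and every $x \in X$ the gate property yields $d_\infty(\gamma^x_{[f]}, g) = g(x) - \gamma^x_{[f]}(x)$ (so in particular $\gamma^x_{[f]}(x) = \min_{g \in [f]} g(x)$), and consequently for any two gates $d_\infty(\gamma^x_{[f]}, \gamma^y_{[f]}) = \gamma^y_{[f]}(x) - \gamma^x_{[f]}(x) = \gamma^x_{[f]}(y) - \gamma^y_{[f]}(y)$. I will also use repeatedly that $\mathrm{E}(X,d) = \mathrm{E}'(X,d)$ by the (LRC), so for each $x \in X$ there exists $x' \in X$ with $\{x,x'\} \in A(f)$, and that every $g \in [f]$ satisfies $A(f) \subset A(g)$, hence $g(x) + g(x') = d(x,x')$; applied to $g = \gamma^y_{[f]}$ this gives $\gamma^y_{[f]}(x) + \gamma^y_{[f]}(x') = d(x,x')$ for every $y \in X$.

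For part (i), I set $\sigma(\gamma^x_{[f]}) := \gamma^{x'}_{[f]}$ where $x'$ is any point with $\{x,x'\} \in A(f)$. Combining the gate identities for the three gates at the basepoints $x$ and $x'$ with the relation $\gamma^y_{[f]}(x) + \gamma^y_{[f]}(x') = d(x,x')$ produces, for every $y \in X$,
\[
d(x,x') = \gamma^x_{[f]}(x) + \gamma^{x'}_{[f]}(x') + d_\infty(\gamma^x_{[f]}, \gamma^y_{[f]}) + d_\infty(\gamma^y_{[f]}, \gamma^{x'}_{[f]}).
\]
Specializing to $y = x$ gives $d_\infty(\gamma^x_{[f]}, \gamma^{x'}_{[f]}) = d(x,x') - \gamma^x_{[f]}(x) - \gamma^{x'}_{[f]}(x')$; reinserting this yields the antipodal equality. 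A short triangle argument shows that antipodes are unique, which in turn makes $\sigma$ well-defined independently of the representative $x$ and of the choice of $x'$, and forces $\sigma^2 = \mathrm{id}$ by the symmetry of $A(f)$.

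For part (ii), the formula $\Phi(g)(\gamma^x_{[f]}) = d_\infty(\gamma^x_{[f]}, g)$, valid by the gate property, immediately shows that $\Phi$ is well-defined (the value does not depend on the representative $x$) and isometric onto its image in $\mathbb{R}^{G([f])}$. The triangle inequality gives $\Phi(g) \in \Delta(G([f]),d_\infty)$, and extremality at each gate $\gamma^x_{[f]}$ is obtained by testing against $\sigma(\gamma^x_{[f]})$: the identity from (i) together with $g(x) + g(x') = d(x,x')$ produces $\Phi(g)(\gamma^x_{[f]}) + \Phi(g)(\gamma^{x'}_{[f]}) = d_\infty(\gamma^x_{[f]}, \gamma^{x'}_{[f]})$. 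For surjectivity I invert by $h \mapsto g$ where $g(x) := h(\gamma^x_{[f]}) + \gamma^x_{[f]}(x)$; the triangle inequality in $G([f])$ gives $g \in \Delta(X,d)$, and $\Phi(g) = h$ is immediate. Since $\Phi$ and its inverse are affine in $g$ and $h$ respectively, the bijection maps faces to faces, so $\Phi$ is an isomorphism of polytopes.

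The main obstacle will be showing that the lifted $g$ actually lies in $[f]$, which reduces to proving $h(\gamma^x_{[f]}) + h(\gamma^y_{[f]}) = d_\infty(\gamma^x_{[f]}, \gamma^y_{[f]})$ for every $\{x,y\} \in A(f)$. I will handle this through the following auxiliary lemma: if $(Y,d)$ is antipodal with involution $\sigma$ and $h \in \mathrm{E}(Y,d)$, then $h(y) + h(\sigma(y)) = d(y, \sigma(y))$ for all $y \in Y$. For the proof, given $\epsilon > 0$, the $\mathrm{E}$-characterization yields $z_\epsilon \in Y$ with $d(y, z_\epsilon) - h(z_\epsilon) > h(y) - \epsilon$; the antipodal identity $d(y, \sigma(y)) = d(y, z_\epsilon) + d(z_\epsilon, \sigma(y))$ combined with the $1$-Lipschitz bound $d(z_\epsilon, \sigma(y)) \ge h(\sigma(y)) - h(z_\epsilon)$ yields $h(y) + h(\sigma(y)) < d(y, \sigma(y)) + \epsilon$, which together with the $\Delta$-inequality gives equality. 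Finally, for $\{x,y\} \in A(f)$, uniqueness of antipodes from part (i) forces $\gamma^y_{[f]} = \sigma(\gamma^x_{[f]})$, so applying the lemma in $G([f])$ supplies exactly the equality needed, and completes $g \in [f]$.
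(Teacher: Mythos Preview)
Your argument is correct and follows the same overall strategy as the paper: define the antipode map via $A(f)$, express $\Phi(g)$ as the distance-to-gate function, check $\Phi(g)\in\mathrm{E}(G([f]),d_\infty)$ via the antipodal pair, and invert by $g(x)=h(\gamma^x_{[f]})+\gamma^x_{[f]}(x)$. Two local steps are handled more directly than in the paper. First, for the identity $h(y)+h(\sigma(y))=d(y,\sigma(y))$ in an antipodal space, the paper appeals to the fact that edges in $A(h)$ propagate along geodesics (which tacitly uses $\mathrm{E}=\mathrm{E}'$ for $G([f])$), whereas your $\epsilon$-argument works straight from the sup-characterization of extremal functions and needs no such hypothesis. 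Second, for the polytope isomorphism the paper passes through affine charts into $l^n_\infty$ and invokes the Mankiewicz extension of Mazur--Ulam to conclude that the bijective isometry is affine; you simply read affinity of $\Phi$ and $\Phi^{-1}$ off the defining formulas, which is more elementary and immediately gives that faces go to faces. Neither change alters the architecture of the proof, but both are clean shortcuts.
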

\begin{proof}
If $\mathrm{rank}(A(f))= 0$, the result clearly holds, hence let $f \in \mathrm{E}(d)$ be such that $\mathrm{rank}(A(f))\ge 1$. 

We first show that $(G([f]),d_{\infty})$ is an antipodal metric space. For each $x \in X$, consider
\[
\rho(x) := \{ y \in X : \{x,y\} \in A(f) \}.
\]
We define the map $\sigma \colon G([f]) \to G([f])$ by sending every gate $\gamma^x_{[f]}$ to the gate $\gamma^y_{[f]}$ where $y \in \rho(x)$ is chosen arbitrarily. To see that $\sigma$ is well-defined, note that for every $g \in [f]$ and if $\{x,y \} \in A(g)$, one has
\begin{align}
d_{\infty}(d_x,d_y) &= d(x,y) \nonumber\\
&= g(x) + g(y) \nonumber \\
&= d_{\infty}(d_x,g) + d_{\infty}(d_y,g) \nonumber \\
&= d_{\infty}(d_x,\gamma^x_{[f]}) + d_{\infty}(\gamma^x_{[f]},g)+ d_{\infty}(g,\gamma^y_{[f]}) + d_{\infty}(\gamma^y_{[f]},d_y) \label{eq:geomppties2}
\end{align}
which rearranging and using the triangle inequality gives that for any $y \in \rho(x)$ and any $g \in [f]$, one has
\begin{equation}\label{eq:geomppties1}
d_{\infty}(\gamma^x_{[f]},\gamma^y_{[f]}) = d_{\infty}(\gamma^x_{[f]},g)+ d_{\infty}(g,\gamma^y_{[f]}).
\end{equation}
Now, let $x' \in X$ be such that $\gamma^{x'}_{[f]}=\gamma^x_{[f]}$. For any $y' \in \rho(x')$, one can use \eqref{eq:geomppties1} to obtain
\begin{align*}
d_{\infty}(\gamma^{x'}_{[f]},\gamma^{y'}_{[f]}) 
= d_{\infty}(\gamma^x_{[f]},\gamma^{y'}_{[f]}) 
&= d_{\infty}(\gamma^x_{[f]},\gamma^y_{[f]}) - d_{\infty}(\gamma^y_{[f]},\gamma^{y'}_{[f]})\\
&= d_{\infty}(\gamma^{x'}_{[f]},\gamma^y_{[f]}) - d_{\infty}(\gamma^y_{[f]},\gamma^{y'}_{[f]}) \\
&= d_{\infty}(\gamma^{x'}_{[f]},\gamma^{y'}_{[f]}) - 2 d_{\infty}(\gamma^y_{[f]},\gamma^{y'}_{[f]})
\end{align*}
from which it follows that $\gamma^{y'}_{[f]}=\gamma^y_{[f]}$, and this is the desired result. This proves that $\sigma$ is well-defined. It is now clear that $\sigma$ is an involution which turns $(G([f]),d_{\infty})$ into an antipodal metric space.

We now show that $\Phi$ defines a bijective isometry. Note that $\Phi$ can also be expressed as
\begin{equation}\label{eq:Definition2Phi}
\Phi(g) : \gamma^x_{[f]} \mapsto d_{\infty}(g,d_x) - d_{\infty}(\gamma^x_{[f]},d_x) 
\end{equation}
and also as 
\begin{equation}\label{eq:Definition3Phi}
\Phi(g) = d^g_{\infty} : \gamma^x_{[f]} \mapsto d_{\infty}(g,\gamma^x_{[f]})
\end{equation}

It is easy to see by \eqref{eq:Definition3Phi} that $\Phi$ is well-defined, i.e. it does not depend on the choice of $x$ or $x'$ as long as $\gamma^x_{[f]}=\gamma^{x'}_{[f]}$. Moreover, for every $g \in [f]$, one clearly has $\Phi(g) \in \Delta(G([f]),d_{\infty})$ by \eqref{eq:Definition3Phi}. From \eqref{eq:geomppties1} and \eqref{eq:Definition3Phi}, it follows that for every $x \in X$, one has
\[
\Phi(g)(\gamma^x_{[f]}) + \Phi(g)(\sigma(\gamma^x_{[f]})) = d(\gamma^x_{[f]},\sigma(\gamma^x_{[f]})).
\]
which shows that $\Phi(g) \in \mathrm{E}(G([f]),d_{\infty})$. To see that $\Phi$ is surjective, let us define for any $h \in \mathrm{E}(G([f]),d_{\infty})$, the associated function
\[
g' : x \mapsto  h(\gamma^x_{[f]}) + \gamma^x_{[f]}(x) .
\]
We clearly have $\Phi(g') = h$ and thus we only need to show that $g' \in [f]$. We have
\begin{align}\label{eq:EdgesCell}
g'(x) + g'(y) &= h(\gamma^x_{[f]}) + \gamma^x_{[f]}(x) + h(\gamma^y_{[f]}) + \gamma^y_{[f]}(y) \nonumber \\
&\ge d_{\infty}(\gamma^x_{[f]}, \gamma^y_{[f]}) + d_{\infty}(\gamma^x_{[f]}, d_x) + d_{\infty}(\gamma^y_{[f]}, d_y) \nonumber\\
&\ge d_{\infty}(d_x, d_y) \nonumber\\
&= d(x,y),
\end{align}
hence in particular $g' \in \Delta(d)$. If $\{x,y\} \in A(f)$, one has by \eqref{eq:geomppties1}
\begin{align*}
&d_{\infty}(\gamma^x_{[f]}, \gamma^y_{[f]}) + d_{\infty}(\gamma^x_{[f]}, d_x) + d_{\infty}(\gamma^y_{[f]}, d_y) \\
&=d_{\infty}(\gamma^x_{[f]}, f)+ d_{\infty}(f, \gamma^y_{[f]}) + d_{\infty}(\gamma^x_{[f]}, d_x) + d_{\infty}(\gamma^y_{[f]}, d_y) \\
&=d_{\infty}(f,d_x)+ d_{\infty}(f, d_y) \\
&=f(x) + f(y) \\
&=d(x,y),
\end{align*}
and since 
$h \in \mathrm{E}(G([f]),d_{\infty})$, one has
\[
h(\gamma^x_{[f]}) + h(\sigma(\gamma^x_{[f]})) = d_{\infty}(\gamma^x_{[f]}, \sigma(\gamma^x_{[f]}))
\]
(recall that for any extremal function $f$, if $\{x,y\} \in A(f)$ and $xy + yz = xz$, then $\{x,z\} \in A(f)$). Hence if $\{x,y\} \in A(f)$, one can replace all inequalities in \eqref{eq:EdgesCell} by equalities. This shows that $g' \in [f]$ and thus $\Phi$ is surjective. Now, it is easy to see that $\Phi$ preserves distances since for every $g,h \in [f]$, one has by \eqref{eq:Definition2Phi} that 
\[
g(x) - h(x) 
= g(x) - \gamma^x_{[f]}(x) - \left(h(x) - \gamma^x_{[f]}(x) \right) 
= \Phi(g)(\gamma^x_{[f]}) - \Phi(h)(\gamma^x_{[f]})
\]
and thus $\Phi$ is a bijective isometry. Note now that by \cite{Lan} there is an affine isometry $\alpha \colon ([f],d_{\infty}) \to l^n_{\infty}$ and since $\mathrm{E}(G([f]),d_{\infty})$ consists of a unique maximal cell, another affine isometry $\beta \colon \mathrm{E}(G([f]),d_{\infty}) \to l^n_{\infty}$. It follows that the map
\[
\beta \circ \Phi \circ \alpha^{-1} \colon \alpha([f]) \to \beta(\mathrm{E}(G([f]),d_{\infty}))
\]
is a bijective isometry between convex subsets (with non-empty interior) of finite dimensional normed linear spaces. It follows by an extension of Mazur-Ulam Theorem (cf. \cite{Man}) that $\beta \circ \Phi \circ \alpha^{-1}$ is the restriction of an affine bijective isometry. It follows that $\Phi$ has the same property and is thus in particular a polytope isomorphism.
\end{proof}

Let $(X,d)$ be a totally split-decomposable metric space with integer-valued metric satisfying the (LRC). By Lemma \ref{Lem:l18}, we know that every cell $[f] \subset \mathrm{E}'(d)=\mathrm{E}(d)$ is $X$-gated. Hence if $(G([f]),d_{\infty})$ denotes the set of all $X$-gates of $[f]$, and if $d_{\infty} $ denotes the metric $d_{\infty}(f,g) = \left\| f-g \right\|_{\infty}$ (we adopt the same notation for restrictions of $d_{\infty}$), we obtain by Theorem~\ref{Thm:AntipodalGateSetsCells} that the following hold: 
\begin{enumerate}
\item $(G([f]),d_{\infty})$ is an antipodal metric space.
\item $[f]$ and $\mathrm{E}(G([f]),d_{\infty})$ are combinatorially equivalent polytopes.
\end{enumerate}
If we assume that $\mathrm{dim}([f]) = n$, then since $G([f]) \subset [f]$, it follows (cf. \cite[Proposition~3.5 and Theorem~4.3~(1)]{Lan}) that $\mathrm{E}(G([f]),d_{\infty})$ isometrically embeds into $([f],d_{\infty})$ through
\[
\mathrm{E}(G([f]),d_{\infty}) \hookrightarrow \mathrm{E}([f]) \cong  [f].
\]
Thus in particular, one has 
\begin{equation}\label{eq:e66}
\mathrm{dim}(\mathrm{E}(G([f]),d_{\infty})) \le n
\end{equation}
Finally, this implies by \cite[Theorem~1.2]{HubKMII} that
\[
|G([f])| \le 2n.
\]
Indeed, if $|G([f])| \ge 2(n+1)$ (possibly $|G([f])|=\infty$), then since $(G([f]),d_{\infty})$ is antipodal, we can select $(n+1)$ pairs of antipodal points in $G([f])$ to obtain an antipodal metric space $(A,d_{\infty})$ with
\[
A \subset G([f])
\]
and $|A|=2(n+1)$. It follows by \cite[Proposition~3.5]{Lan} that $\mathrm{E}(A,d_{\infty})$ is isometrically embedded in $\mathrm{E}(G([f]),d_{\infty})$ and thus again by \cite[Theorem~1.2]{HubKMII}, one has
\[
n + 1 \le \mathrm{dim}(\mathrm{E}(A,d_{\infty})) \le \mathrm{dim}(\mathrm{E}(G([f]),d_{\infty})
\]
which contradicts \eqref{eq:e66}. Hence, with $(2)$ above, this proves that 
\[
|G([f])| \le 2 \mathrm{dim}([f]).
\]

We continue with the following:

\begin{Lem}\label{Lem:l19}
Let $\kappa \colon (A,d) \to (A',d')$ be a map of metric spaces such that the following hold:
\begin{enumerate}[(i)]
\item $\kappa$ is $1$-Lipschitz,
\item $\kappa$ is surjective,
\item $(A,d)$ is an antipodal metric space and 
\item for any $x \in A$, there is $y \in A$ antipodal to $x$ such that
\[
d(x,y) = d'(\kappa(x),\kappa(y)).
\]
\end{enumerate}
Then, it follows that $\kappa$ is an isometry.
\end{Lem}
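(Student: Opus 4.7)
The plan is to exploit the strong ``geodesic-through-everything'' nature of the antipodal property together with the $1$-Lipschitz hypothesis to force distance-preservation everywhere. Concretely, for fixed $x \in A$, condition $(iv)$ gives an antipode $y$ of $x$ (say $y = \sigma(x)$ where $\sigma$ is the antipodal involution) with $d(x,y) = d'(\kappa(x), \kappa(y))$, and by antipodality this antipode $y$ has the property that \emph{every} $z \in A$ lies ``on the geodesic'' from $x$ to $y$, in the sense that $d(x,z) + d(z,y) = d(x,y)$.

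Given this setup, pick an arbitrary $z \in A$ and chain the inequalities:
\[
d(x,y) \;=\; d'(\kappa(x), \kappa(y)) \;\le\; d'(\kappa(x), \kappa(z)) + d'(\kappa(z), \kappa(y)) \;\le\; d(x,z) + d(z,y) \;=\; d(x,y),
\]
where the first inequality is the triangle inequality in $(A', d')$ and the second uses that $\kappa$ is $1$-Lipschitz on each of the two pairs. Since the outer terms agree, equality holds throughout; in particular, since both summands on the left are individually bounded by the corresponding summands on the right, one concludes $d'(\kappa(x), \kappa(z)) = d(x,z)$ (and incidentally $d'(\kappa(z), \kappa(y)) = d(z,y)$). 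As $x$ was arbitrary and $z$ was arbitrary relative to $x$, this yields $d'(\kappa(u), \kappa(v)) = d(u,v)$ for all $u,v \in A$, so $\kappa$ preserves distances.

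Finally, distance-preservation implies injectivity (if $\kappa(u) = \kappa(v)$ then $d(u,v) = d'(\kappa(u), \kappa(v)) = 0$, hence $u = v$), and combined with the surjectivity assumption $(ii)$ this shows that $\kappa$ is a bijective isometry. No genuine obstacle arises here: the role of each hypothesis is transparent, with antipodality providing a ``geodesic through every point'' that sandwiches the two uses of the $1$-Lipschitz property, and surjectivity only needed to upgrade the resulting isometric embedding to a bijection.
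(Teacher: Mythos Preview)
Your proof is correct and essentially identical to the paper's own argument: both fix $x$, invoke condition $(iv)$ to get an antipode $y$ with $d(x,y)=d'(\kappa(x),\kappa(y))$, and then sandwich the triangle inequality for $d'$ between two uses of the $1$-Lipschitz bound along the decomposition $d(x,y)=d(x,z)+d(z,y)$ to force $d'(\kappa(x),\kappa(z))=d(x,z)$. Your final sentence on injectivity/bijectivity is a harmless addendum not present in the paper, where ``isometry'' is taken in the distance-preserving sense.
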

\begin{proof}
Let  $x,z \in A$ be chosen arbitrarily. By $(iv)$, there is $y$ antipodal to $x$ such that $d(x,y) = d'(\kappa(x),\kappa(y))$. Hence, one has:
\begin{align*}
d(x,z) + d(z,y) &= d(x,y) \\
&= d'(\kappa(x),\kappa(y)) \\
&\le d'(\kappa(x),\kappa(z)) + d'(\kappa(z),\kappa(y)) \\
&\le d(x,z) + d(z,y) \\
&= d(x,y).
\end{align*}
It follows that the above inequalities are actual equalities and using again that $\kappa$ is $1$-Lipschitz, it follows that
\[
d'(\kappa(x),\kappa(z)) = d(x,z).
\]
Since $x$ and $z$ were chosen arbitrarily, this proves the result.
\end{proof}

\begin{Lem}\label{Lem:l199}
Let $(X,d)$ be a totally split-decomposable metric space with integer-valued metric satisfying the (LRC). Let $[f]$ be any positive dimensional cell of $\mathrm{E}(d)$ and let $\bar{f}$ as well as $\psi_{\bar{f}}$ be defined as in Lemma \ref{Lem:l17}. Then, the map
\[
\bar{\kappa} := \kappa|_{\Gamma([\psi_{\bar{f}}])} \colon (\Gamma([\psi_{\bar{f}}]),d_1) \to (G([f]),d_{\infty})
\]
is an isometry.
\end{Lem}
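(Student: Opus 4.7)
The plan is to verify the four hypotheses of Lemma~\ref{Lem:l19} applied to
$\bar{\kappa}\colon (\Gamma([\psi_{\bar{f}}]),d_1) \to (G([f]),d_{\infty})$, and thereby conclude that $\bar{\kappa}$ is an isometry. Condition~(i) that $\bar{\kappa}$ is $1$-Lipschitz is immediate since $\kappa$ itself is $1$-Lipschitz. For condition~(iii), the antipodality of $(\Gamma([\psi_{\bar{f}}]),d_1)$ is exactly Lemma~\ref{Lem:l15}, with the involution $\sigma$ given by $\gamma^x_{[\psi_{\bar{f}}]} \mapsto \gamma^y_{[\psi_{\bar{f}}]}$ for some $y \in \bigcap_{S \in \mathcal{S}(\psi_{\bar{f}})} \overline{S(x)}$.

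For surjectivity~(ii), recall from the proof of Lemma~\ref{Lem:l18} that for each $x \in X$ the gate of $d_x$ in $[f]=[\bar{f}]$ is precisely $\kappa(\gamma^x_{[\psi_{\bar{f}}]})$. Since every element of $G([f])$ is by definition such a gate for some $x \in X$, and $\gamma^x_{[\psi_{\bar{f}}]} \in \Gamma([\psi_{\bar{f}}])$ by Lemma~\ref{Lem:l13}, the restriction $\bar{\kappa}$ hits all of $G([f])$.

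The delicate part is verifying condition~(iv): given $x \in X$, I need to exhibit an antipode of $\gamma^x_{[\psi_{\bar{f}}]}$ for which $\kappa$ preserves the distance. Apply Lemma~\ref{Lem:l14} with $\phi = \psi_{\bar{f}}$ to produce $y \in X$ with $d(x,y) = \kappa(\psi_{\bar{f}})(x) + \kappa(\psi_{\bar{f}})(y) = \bar{f}(x) + \bar{f}(y)$ and $S(x) \neq S(y)$ for every $S \in \mathcal{S}(\psi_{\bar{f}})$. The latter condition places $y$ in $\bigcap_{S \in \mathcal{S}(\psi_{\bar{f}})} \overline{S(x)}$, so $\gamma^y_{[\psi_{\bar{f}}]}$ is antipodal to $\gamma^x_{[\psi_{\bar{f}}]}$ for the involution of Lemma~\ref{Lem:l15}. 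The former condition means that $\{x,y\} \in A(\bar{f}) = A(\kappa(\psi_{\bar{f}}))$, so equation~\eqref{eq:1116} from Lemma~\ref{Lem:l16} (applied with $\psi = \psi_{\bar{f}}$) gives
\[
d(x,y) = d_1(\phi_x,\gamma^x_{[\psi_{\bar{f}}]}) + d_1(\gamma^x_{[\psi_{\bar{f}}]},\gamma^y_{[\psi_{\bar{f}}]}) + d_1(\gamma^y_{[\psi_{\bar{f}}]},\phi_y).
\]
Since $\mathrm{e}$ is an isometric embedding we also have $\|d_x - d_y\|_\infty = d(x,y)$, and since $\kappa$ is $1$-Lipschitz with $\kappa(\phi_z) = d_z$, the triangle inequality forces
\[
\|d_x - d_y\|_\infty \le \|d_x - \kappa(\gamma^x_{[\psi_{\bar{f}}]})\|_\infty + \|\kappa(\gamma^x_{[\psi_{\bar{f}}]}) - \kappa(\gamma^y_{[\psi_{\bar{f}}]})\|_\infty + \|\kappa(\gamma^y_{[\psi_{\bar{f}}]}) - d_y\|_\infty \le d(x,y).
\]
All inequalities collapse to equalities, yielding in particular $d_{\infty}(\kappa(\gamma^x_{[\psi_{\bar{f}}]}),\kappa(\gamma^y_{[\psi_{\bar{f}}]})) = d_1(\gamma^x_{[\psi_{\bar{f}}]},\gamma^y_{[\psi_{\bar{f}}]})$, which is exactly condition~(iv).

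With all four hypotheses in place, Lemma~\ref{Lem:l19} applies and concludes that $\bar{\kappa}$ is an isometry. The main obstacle is step (iv): it requires matching the ``combinatorial'' antipode supplied by Lemma~\ref{Lem:l14} with the geometric gate-antipode of Lemma~\ref{Lem:l15}, and then exploiting the already-established identity~\eqref{eq:1116} together with the $1$-Lipschitz squeeze to transfer the additivity from $d_1$ to $d_\infty$.
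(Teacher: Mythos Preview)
Your proof is correct and follows essentially the same approach as the paper: verify the four hypotheses of Lemma~\ref{Lem:l19}, with the key step~(iv) handled by choosing $y$ with $\{x,y\}\in A(\bar{f})$, invoking Lemma~\ref{Lem:l14} for antipodality, and then collapsing a chain of $1$-Lipschitz inequalities against the additive $d_1$-decomposition. The only cosmetic difference is that the paper cites equations~\eqref{eq:e17} and~\eqref{eq:e18} from the proof of Lemma~\ref{Lem:l18} for this last squeeze, whereas you cite~\eqref{eq:1116} from Lemma~\ref{Lem:l16} and rewrite the squeeze explicitly; the content is the same.
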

\begin{proof}
We already know that $\bar{\kappa}$ is $1$-Lipschitz and it is surjective by the proof of Lemma \ref{Lem:l18}. Now, for any $x \in X$, there is $y \in X$ such that
\[
\bar{f}(x) + \bar{f}(y) = d(x,y).
\]
By the proofs of Lemmas \ref{Lem:l14} and \ref{Lem:l15}, it follows (by definition $\kappa(\psi_{\bar{f}})=\bar{f}$) that $\gamma^x_{[\psi_{\bar{f}}]}$ and $\gamma^y_{[\psi_{\bar{f}}]}$ are antipodal in $(\Gamma([\psi_{\bar{f}}]),d_1)$. Furthermore, by \eqref{eq:e17} and \eqref{eq:e18}, one has 
\[
d_1(\gamma^x_{[\psi_{\bar{f}}]},\gamma^y_{[\psi_{\bar{f}}]}) = \left\| \kappa(\gamma^x_{[\psi_{\bar{f}}]}) - \kappa(\gamma^y_{[\psi_{\bar{f}}]}) \right\|_{\infty}.
\]
We can thus apply Lemma \ref{Lem:l19} to deduce that $\bar{\kappa}$ is an isometry.
\end{proof}

For $x,y \in X$ arbitrarily chosen, it follows from the definitions of $\gamma^x_{[\psi_{\bar{f}}]}$ and $\gamma^y_{[\psi_{\bar{f}}]}$, that one has
\begin{equation}\label{eq:e199}
d_1(\gamma^x_{[\psi_{\bar{f}}]},\gamma^y_{[\psi_{\bar{f}}]}) = \sum_{S \in \mathcal{S}(\psi_{\bar{f}})} \alpha_S \delta_S(x,y)
\end{equation}
where $\mathcal{S}(\psi_{\bar{f}})$ is weakly compatible. It follows by Theorem \ref{Thm:thm3} that $(\Gamma([\psi_{\bar{f}}]),d_1)$ is a totally split-decomposable metric space. Moreover, for any metric space $(X,d)$, the \textit{underlying graph} $\mathrm{UG}(X,d)$ of $(X,d)$ is the graph $(X,E)$ where $\{x,y\} \in E$ if and only if $d(x,z) + d(z,y) > d(x,y)$ for any $z \in X \setminus \{x,y\}$. Furthermore, let $C_6$ denote the $6$-cycle metric graph and let $K_{3\times 2}$ denote the complete graph on six vertices with $3$ disjoint edges taken away (i.e., the $1$-skeleton of the octahedron). 

\begin{Rem}\label{Rem:AntSS}
Note that if $\mathcal{S}$ is an antipodal split system on $(X,d)$, then for any $(A_i)_{i \in I}$, if $\bigcup_{i \in I} A_i = X$, it follows that $\bigcap_{i \in I} A_i = \emptyset$. Indeed, if $x \in \bigcap_{i \in I} A_i$, there is a subsystem of pairwise different splits $\{S_i\}_{i \in I} \subset \mathcal{S}$ such that $A_i = S_i(x)$. Now, there is $y \in X$ such that $y \in \bigcap_{S \in \mathcal{S}} \overline{S(x)} \subset \bigcap_{i \in I} \overline{S_i(x)} = \bigcap_{i \in I} A^c_i = (\bigcup_{i \in I} A_i)^c$,  which implies that $\bigcup_{i \in I} A_i \neq X$. The octahedral split system is an example of antipodal split system.
\end{Rem}

We conclude this section with:

\begin{proof}[Proof of Theorem \ref{Thm:t11}]
The existence of $\mathrm{K}(X,d)$ and $\sigma$ in Theorem \ref{Thm:t11} follows immediately from Theorem~\ref{Thm:t12} (which is proved in the next section). Indeed, Theorem~\ref{Thm:t12} implies that if we remetrize $\mathrm{E}(X,d)$ by identifying each cell (which is a parallelotope by the first part of the Theorem \ref{Thm:t11}) with a corresponding unit hypercube (of same dimension) endowed with the euclidean metric, and considering the induced length metric, we obtain a complex $\mathrm{K}(X,d)$ which satisfies the CAT(0) link condition. Since $(X,d)$ satisfies the (LRC), it follows that $\mathrm{K}(X,d)$ is complete and locally CAT(0) (analogue to I.7.13~Theorem and II.5.2~Theorem in \cite{BriH}). By the (LRC), it also follows that $\mathrm{K}(X,d)$ is locally bi-Lipschitz equivalent to $\mathrm{E}(X,d)$, the topology induced by the length metric on $\mathrm{K}(X,d)$ is therefore the same as the topology on $\mathrm{E}(X,d)$ and thus $\mathrm{K}(X,d)$ is contractible as well. By Cartan-Hadamard Theorem, it follows that $\mathrm{K}(X,d)$ is globally CAT(0).

As an introductory remark, note that by Lemma \ref{Lem:l15}, Lemma \ref{Lem:l199} and \eqref{eq:e199}, it follows that $(G([f]),d_{\infty})$ is an antipodal totally split-decomposable metric space. By Theorem~\ref{Thm:AntipodalGateSetsCells}, $([f],d_{\infty})$ is combinatorially equivalent to $\mathrm{E}(G([f]),d_{\infty})$, which is by \cite[Theorem~1.2]{HubKMII} an $n$-dimensional combinatorial hypercube if $|G([f])| = 2n \ge 8$. Moreover, if $|G([f])| \le 4$, then $\mathrm{E}(G([f]),d_{\infty})$ is clearly a combinatorial hypercube as well. Now, assume that $|G([f])| = 6$. Since $(G([f]),d_{\infty})$ is antipodal, it follows by \cite[Corollary~3.3]{HubKMI} that $\mathrm{UG}(G([f]),d_{\infty})$ is either $K_{3\times 2}$ or $C_6$. If $\mathrm{UG}(G([f]),d_{\infty}) = C_6$, then by \cite[Theorem~1.2~(a)]{HubKMII}, $\mathrm{E}(G([f]),d_{\infty})$ is a $3$-dimensional combinatorial hypercube.

Assume now that $\mathrm{E}(G([f]),d_{\infty})$ is a combinatorial rhombic dodecahedron, i.e. \eqref{it:it2} in Theorem~\ref{Thm:t11} does not hold, then $\mathrm{UG}(G([f]),d_{\infty}) = K_{3\times 2}$ and it follows by the proof of \cite[Theorem~5.1, Case~2]{HubKMI} that
\[
d_{\infty}(\kappa(\gamma^x_{[\psi_{\bar{f}}]}),\kappa(\gamma^y_{[\psi_{\bar{f}}]}) )= \sum_{S \in \{S_1,S_2,S_3,S_4\}} \beta_S \delta_S(x,y)
\]
where $\{S_1,S_2,S_3,S_4\}$ is weakly compatible and the coefficients $\beta_S$ are all positive. Moreover, by \eqref{eq:e199} and Lemma \ref{Lem:l199}, we have
\[
d_{\infty}(\kappa(\gamma^x_{[\psi_{\bar{f}}]}),\kappa(\gamma^y_{[\psi_{\bar{f}}]}) )
=
\sum_{S \in \mathcal{S}(\psi_{\bar{f}})} \alpha_S \delta_S(x,y)
\]
where $\mathcal{S}(\psi_{\bar{f}})$ is weakly compatible and consists of $d$-splits of $X$. Note that the metric $d_{\infty}$ on $G([f])$ induces a pseudometric $\bar{d}$ on $X$ by setting 
\[
\bar{d}(x,y) := d_{\infty}(\kappa(\gamma^x_{[\psi_{\bar{f}}]}),\kappa(\gamma^y_{[\psi_{\bar{f}}]}) ).
\]
It follows by Theorem \ref{Thm:thm3} that the split systems $\mathcal{S}(\psi_{\bar{f}})$ and $\{S_1,S_2,S_3,S_4\}$ each consist of all the $\bar{d}$-splits of $X$, which implies that $\mathcal{S}(\psi_{\bar{f}})=\{S_1,S_2,S_3,S_4\}$. Therefore, the split system $\bar{\mathcal{S}}:=\{S_1,S_2,S_3,S_4\}$ consists of $d$-splits of $X$ and thus it is an octahedral split subsystem of $\mathcal{S}$. We can write the splits in $\bar{S}$ as
\begin{align}\label{eq:OctSS}
S_1 &:= \{ Y^1_1 \sqcup Y^1_2 \sqcup  Y^1_3 , Y^{-1}_1 \sqcup Y^{-1}_2 \sqcup Y^{-1}_3 \}, \nonumber \\
S_2 &:= \{ Y^1_1 \sqcup Y^1_2 \sqcup Y^{-1}_3 , Y^{-1}_1 \sqcup Y^{-1}_2 \sqcup Y^1_3 \}, \nonumber \\
S_3 &:= \{ Y^1_1 \sqcup Y^{-1}_2 \sqcup Y^1_3 , Y^{-1}_1 \sqcup Y^1_2 \sqcup Y^{-1}_3 \}, \nonumber \\
S_4 &:= \{ Y^1_1 \sqcup Y^{-1}_2 \sqcup Y^{-1}_3 , Y^{-1}_1 \sqcup Y^1_2 \sqcup Y^1_3 \},
\end{align}
where $X = Y^1_1 \sqcup Y^{-1}_1 \sqcup Y^1_2 \sqcup Y^{-1}_2 \sqcup Y^1_3 \sqcup Y^{-1}_3$ and all sets being non-empty. We know from the above that since we have assumed that $[f]$ is a combinatorial rhombic dodecahedron, then $[f]$ must be a maximal cell (in dimensions higher than three, a cell must be a hypercube and the same holds for all of its faces). Since our assumptions imply $\mathrm{E}(d) = \mathrm{E}'(d)$ and $[f]$ is a finite dimensional maximal cell, it follows by \cite{LanP} that the graph $(X,A(f))$ consists of three complete bipartite connected components that are given (cf. the proof of Lemma \ref{Lem:l199}) by their respective partitions, namely $Y^1_1 \sqcup Y^{-1}_1$, $Y^1_2 \sqcup Y^{-1}_2$ and $Y^1_3 \sqcup Y^{-1}_3$ (this is the only possibility since if $\{x,y\} \in A(f)$ then $S(x) \neq S(y)$ for every $S \in \bar{\mathcal{S}}$). For each $S:=\{A,B\} \in \mathcal{S} \setminus \bar{\mathcal{S}}$, there is $\{x,y\} \in A(f)$ such that $S(x) = S(y)$ by bipartiteness, let us say $\{x,y\} \subset A$. It follows from \eqref{eq:e1} that $\psi:=\psi_{\bar{f}} \in \bar{T}(\mathcal{S},\alpha)$ (where $\psi_{\bar{f}}$ is as defined in Lemma \ref{Lem:l17}, in particular $[\bar{f}]=[f]$ where $\kappa(\psi)=\bar{f}$) satisfies then $\psi(A) = 0$. Hence, $\psi(B) = \alpha_S/2$ and thus for every further $\{x',y'\} \in A(f)$, one has $\{x',y'\} \not \subset B$. This implies by bipartite completeness of $Y^1_i \sqcup Y^{-1}_i$ that there are $\sigma,\tau,\theta \in \{ \pm 1 \}$ such that $Y^{\sigma}_1 \cup Y^{\tau}_2 \cup Y^{\theta}_3 \subsetneq A$ which is equivalent to $\{A,B\}$ and $\{A',B'\} = \{ Y^{\sigma}_1 \cup Y^{\tau}_2 \cup Y^{\theta}_3, Y^{-\sigma}_1 \cup Y^{-\tau}_2 \cup Y^{-\theta}_3 \} \in \bar{\mathcal{S}}$ being compatible (i.e., $A' \subset A$).
It follows that \eqref{it:it1} in Theorem~\ref{Thm:t11} does not hold.

Conversely, assume that \eqref{it:it1} does not hold. Define $\psi \in H(\mathcal{S},\alpha)$ so that $\mathcal{S}(\psi) = \bar{\mathcal{S}}$ and $\bar{\mathcal{S}}$ consists of four splits as given in \eqref{eq:OctSS} and is a converse to \eqref{it:it1}. For any $S:=\{A,B\} \in \mathcal{S} \setminus \bar{\mathcal{S}}$, one has $\psi(A) = 0$ for $Y^{\sigma}_1 \cup Y^{\tau}_2 \cup Y^{\theta}_3 \subsetneq A$ and accordingly $\psi(B) = \alpha_S/2$. One has $\psi \in \bar{T}(\mathcal{S},\alpha)$ since for any $(C_i)_{i \in I} \subset \mathrm{supp}(\psi)$, we can consider for each $i \in I$, a corresponding $S'_i=\{A'_i,B'_i\} \in \bar{\mathcal{S}}$ such that $C_i \subset B'_i =:D_i$. It follows that if $\cup_{i \in I} C_i \neq \emptyset$, then $\cup_{i \in I} D_i \neq \emptyset$ and thus by Remark \ref{Rem:AntSS}, it follows that $\cap_{i \in I} D_i = \emptyset$ which implies $\cap_{i \in I} C_i \neq \emptyset$. It is then easy to see that $[\kappa(\psi)]$ is a combinatorial rhombic dodecahedron since for any $(x,y) \in Y^1_i \times Y^{-1}_i$, one has that $S(x) = S(y)$ implies that $S \in \mathcal{S} \setminus \mathcal{S}(\psi)$ and by definition of $\psi$ we have $\psi(S(x))=0=\psi(S(y))$ but since $\psi \in H(\mathcal{S},\alpha)$, we have equality in \eqref{eq:e1}, which implies that $\{x,y\} \in A(\kappa(\psi))$. This means that $(X,A(\kappa(\psi)))$ consists of the three complete bipartite connected components $X = \cup_{i \in \{1,2,3\}} (Y^1_i \sqcup Y^{-1}_i)$ which implies that $\mathrm{dim}([\kappa(\psi)])=3$. Setting $f:=\kappa(\psi) \in \mathrm{E}(d)$, it is easy to see that we have the decomposition $f := \sum_{S \in \mathcal{S}} \alpha_S f_S$ so that for $S:=\{A,B\} \in \mathcal{S}$, one has 
\begin{equation}\label{eq:Dec}
f_S(z) \; = \; \left\lbrace{\begin{matrix} 
\frac{\psi(A)}{\alpha_S/2} &\text{if }  z \in A , \\ \\
\frac{\psi(B)}{\alpha_S/2} &\text{if }  z \in B,
\end{matrix}}\right.
\end{equation}
and $\psi_f=\psi$ holds ($\psi_f$ is defined in the proof of Lemma \ref{Lem:l12}). By \eqref{eq:Cel1} and \eqref{eq:Cel2}, we have (in the notation of the proof of Lemma \ref{Lem:l17}) that $\mathcal{S}_f \subset \mathcal{S}_{[f]}$ and $|\mathcal{S}_{[f]}|= 4$. But \eqref{eq:Dec} shows that $|\mathcal{S}_f|=4$ since $|\mathcal{S}(\psi)|=4$.
It follows that $\mathcal{S}_f = \mathcal{S}_{[f]}$ and thus we can set $\bar{f}:=f$. We then have $\psi_{\bar{f}}=\psi_f=\psi$ and with Lemma \ref{Lem:l199} we obtain that $[f]$  is a combinatorial rhombic dodecahedron and thus \eqref{it:it2} in Theorem~\ref{Thm:t11} does not hold either.
\end{proof}

\section{The CAT(0) Link Condition for the Buneman Complex and the Cubical Injective Hull}
We start by considering $B(\mathcal{S},\alpha)$ which displays some similarities with the CAT(0) cube complex that is constructed in \cite{ChaN} and denoted by $X$.

\begin{Def}\label{Def:CAT0LinkConditionCubeComplex}
A polyhedral complex $\mathrm{K}$, whose cells are combinatorial hypercubes, is said to satisfy the CAT(0) \textit{link condition} if for every set of seven cells $C,C_1^1,C_1^2,C_1^3,C_2^1,C_2^2,C_2^3$ of $\mathrm{K}$, such that the following hold:
\begin{enumerate}[(A)]
\item $C = \bigcap_{i \in \{1,2,3\}} C_2^i$,
\item $C_1^j = \bigcap_{i \in \{1,2,3\} \setminus \{ j \}} C_2^i$,
\item $\mathrm{dim}(C) = k \ge 0$ and for each $i \in \{1,2,3\}$, one has $\mathrm{dim}(C_1^i) = k+1$ as well as $\mathrm{dim}(C_2^i) = k+2$,
\end{enumerate}
there exists a cell $\bar{C}$ of $\mathrm{K}$ such that $\mathrm{dim}(\bar{C}) = k+3$ and $\bigcup_{i \in \{1,2,3\}} C_2^i \subset \bar{C}$. 
\end{Def}

We now have:

\begin{Lem}\label{Lem:l31}
Let $(\mathcal{S},\alpha)$ be a split system pair on a set $X$. Then, the Buneman complex $B(\mathcal{S},\alpha)$ satisfies the CAT(0) link condition.
\end{Lem}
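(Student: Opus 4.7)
The plan is to encode each cell $[\mu]$ of $H(\mathcal{S},\alpha)$ by its support $T:=\mathrm{supp}(\mu)\subset U(\mathcal{S})$, and then exhibit the required $(k+3)$-cell $\bar C$ as the cell whose support equals $\mathrm{supp}(C_2^1)\cup\mathrm{supp}(C_2^2)\cup\mathrm{supp}(C_2^3)$.

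First I would set up the dictionary between cells and supports. Because $\mu(A)+\mu(\bar A)=\alpha_{S_A}/2>0$ for every $S_A\in\mathcal{S}$, any support $T$ must meet each split, and conversely every such $T$ is realized as $\mathrm{supp}(\mu)$ for some $\mu\in H(\mathcal{S},\alpha)$. The cell $[\mu]$ is then uniquely determined by $T$, it is a hypercube of dimension $|\{S\in\mathcal{S}:S\subset T\}|$ (exactly those splits whose two elements both lie in $T$ contribute a free coordinate), and inclusions and intersections of cells translate directly to inclusions and intersections of supports.

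Writing $T:=\mathrm{supp}(C)$, this dictionary forces each $C_1^j$ to have a support of the form $T\cup\{B_j\}$ for a unique $B_j\in U(\mathcal{S})\setminus T$ with $\bar B_j\in T$; the three elements $B_1,B_2,B_3$ are pairwise distinct because the $C_1^j$ are. The identity $C_2^i=\bigcap_{j\neq i}C_1^j$ then yields $\mathrm{supp}(C_2^i)=T\cup\{B_j:j\neq i\}$, matching $\mathrm{dim}(C_2^i)=k+2$. The natural candidate is then the cell $\bar C$ of $H(\mathcal{S},\alpha)$ with support $\bar T:=T\cup\{B_1,B_2,B_3\}$: it has dimension $k+3$ and contains each $C_2^i$ automatically, since $\mathrm{supp}(C_2^i)\subset\bar T$.

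The main step (and the only genuine content) is to verify that $\bar C$ lies in the subcomplex $B(\mathcal{S},\alpha)$, i.e. that $A\cap B=\emptyset$ whenever $A,B\in\bar T$ satisfy $A\cup B=X$. This reduces case by case to the Buneman condition for one of the cells already in hand: if $\{A,B\}\subset T$ use $C$ itself; if $A\in T$ and $B=B_i$, pick any $j\in\{1,2,3\}\setminus\{i\}$ and use $C_2^j$, whose support contains both $A$ and $B_i$; if $\{A,B\}=\{B_i,B_j\}$ with $i\neq j$, pick the third index $k\in\{1,2,3\}\setminus\{i,j\}$ and use $C_2^k$. The combinatorial fact driving the argument is that every pair of elements of $\bar T$ is contained in $\mathrm{supp}(C_2^i)$ for some $i$, which is exactly the content of the link-condition hypotheses; no genuinely new obstruction can appear, so $\bar C\in B(\mathcal{S},\alpha)$ and the proof concludes.
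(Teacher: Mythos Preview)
Your approach is essentially the same as the paper's: encode cells by their supports, observe that the seven given cells force $\mathrm{supp}(C_1^j)=T\cup\{B_j\}$ and $\mathrm{supp}(C_2^i)=T\cup\{B_j:j\neq i\}$, take $\bar T=T\cup\{B_1,B_2,B_3\}$, and check the Buneman condition on $\bar T$ pairwise. The paper simply asserts ``it is then very easy to check that $\psi\in B(\mathcal{S},\alpha)$''; your explicit case analysis (every pair in $\bar T$ lies in some $\mathrm{supp}(C_2^i)$) is exactly what that sentence hides.

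One slip to fix: you write ``the identity $C_2^i=\bigcap_{j\neq i}C_1^j$'', but that is not among the hypotheses and, read literally, would give $\mathrm{supp}(C_2^i)=\bigcap_{j\neq i}(T\cup\{B_j\})=T$, which is wrong. What you actually need is that hypothesis~(B), $C_1^j=\bigcap_{i\neq j}C_2^i$, gives $C_1^j\subset C_2^i$ for each $j\neq i$, hence $T\cup\{B_j:j\neq i\}\subset\mathrm{supp}(C_2^i)$; equality then follows from $\mathrm{dim}(C_2^i)=k+2$ together with your earlier observation that any extra element of the support would complete an additional split. With that correction the argument is complete.
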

\begin{proof}
Let 
\[
[\mu],[\mu_1^1],[\mu_1^2],[\mu_1^3],[\mu_2^1],[\mu_2^2],[\mu_2^3] \subset B(\mathcal{S},\alpha)
\]
be cells of $B(\mathcal{S},\alpha)$ such that the following hold:
\begin{enumerate}[(i)]
\item $\mathrm{supp}(\mu) = \bigcap_{i \in \{1,2,3\}} \mathrm{supp}(\mu_2^i)$,
\item $\mathrm{supp}(\mu_1^j) = \bigcap_{i \in \{1,2,3\} \setminus \{ j \}} \mathrm{supp}(\mu_2^i)$,
\item $\mathrm{dim}([\mu]) = |\mathcal{S}(\mu)| = k \ge 0$ and for each $j \in \{1,2,3\}$, one has
\[
\mathrm{dim}([\mu_1^j]) = |\mathcal{S}(\mu_1^j)| = k+1,
\]
as well as 
\[
\mathrm{dim}([\mu_2^j])=|\mathcal{S}(\mu_2^j)| = k+2.
\]
\end{enumerate}
This implies that there are splits $\{S_1,S_2,S_3\} \subset \mathcal{S} \setminus \mathcal{S}(\mu)$ such that
\begin{enumerate}[(i)]
\item for $j \in \{1,2,3\}$, one has $\mathrm{supp}(\mu_1^j) = \mathrm{supp}(\mu) \cup S_j$ and
\item for $i \in \{1,2,3\}$, $\mathrm{supp}(\mu_2^i) = \mathrm{supp}(\mu) \cup \bigcup_{j \in \{1,2,3\} \setminus \{i\} } S_j$.
\end{enumerate}
But now, pick $\psi \in H(\mathcal{S},\alpha)$ such that
\[
\mathrm{supp}(\psi) = \mathrm{supp}(\mu) \cup \bigcup_{j \in \{1,2,3\} } S_j = \bigcup_{j \in \{1,2,3\} } \mathrm{supp}(\mu_2^j).
\]
It is then very easy to check that $\psi \in B(\mathcal{S},\alpha)$ and thus $[\psi]$ is a cell of $B(\mathcal{S},\alpha)$. Moreover, one has by definition: 
\begin{enumerate}[(i)]
\item $\bigcup_{i \in \{1,2,3\} } [\mu_2^i] \subset [\psi]$ and
\item $\mathrm{dim}([\psi]) = |\mathcal{S}(\psi)|=k+3$.
\end{enumerate}
This finishes the proof of the CAT(0) link condition for $B(\mathcal{S},\alpha)$. 
\end{proof}
Recall that a split system $\mathcal{S}$ is called antipodal if for every $x \in X$, there is $y \in X$ such that for every $S \in \mathcal{S}$, one has
\[
S(x) \neq S(y).
\]
As a preliminary to the proof of Theorem \ref{Thm:t12}, we have the following:
\begin{Lem}\label{Lem:l22}
Let $\mathcal{S}$ be a split system on a set $X$. Then,
\begin{enumerate}
\item Assume that $\mathcal{S}$ is a weakly compatible split system and assume that for all $i \in \{1,2,3\}$, the split system $\mathcal{S}(\mu_2^i) = \mathcal{S}(\mu) \cup [ \{ S_1,S_2,S_3 \} \setminus \{ S_i\} ]$ is antipodal. Then, $\mathcal{S}(\psi) := \mathcal{S}(\mu) \cup \{ S_1,S_2,S_3\}$ is antipodal as well. 

\item Let $(X,d)$ be a totally split-decomposable metric space (hence in particular, $\mathcal{S}$ is weakly compatible). Let $\{f_2^i\}_{i \in \{1,2,3\}} \subset \mathrm{E}'(d)$ be such that for the split systems given in (1), one has: $\mathcal{S}(\mu_2^i) = \mathcal{S}(\psi_{f_2^i})$. Then, for every $x \in X$, one can find $y \in X$ so that the following hold:
\begin{enumerate}
\item For some $i \in \{1,2,3\}$, one has $\{x,y\} \in A(f_2^i)$.
\item For every $S \in \mathcal{S}(\psi)$, one has $S(x) \neq S(y)$.
\end{enumerate}
\end{enumerate}
\end{Lem}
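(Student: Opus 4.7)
The plan is to reduce both parts to a direct contradiction of weak compatibility of $\mathcal{S}$.

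For part (1), fix $x \in X$. For each $i \in \{1,2,3\}$, antipodality of $\mathcal{S}(\mu_2^i)$ provides $y_i \in X$ with $S(x) \neq S(y_i)$ for every $S \in \mathcal{S}(\mu_2^i) = \mathcal{S}(\mu) \cup (\{S_1,S_2,S_3\} \setminus \{S_i\})$. I claim some $y_i$ satisfies also $S_i(x) \neq S_i(y_i)$, which would then give an antipode for $x$ relative to the whole $\mathcal{S}(\psi)$. Suppose not; then for each $i$, $S_i(x) = S_i(y_i)$, while $S_j(x) \neq S_j(y_i)$ for $j \neq i$. In particular, for all $i,j \in \{1,2,3\}$,
\[
S_i(x) = S_i(y_j) \iff i = j.
\]
I next verify that $\{x,y_1,y_2,y_3\}$ consists of four distinct points: $y_i = y_j$ for $i \neq j$ would yield $S_i(y_i) = S_i(x) \neq S_i(y_j) = S_i(y_i)$, absurd; and $y_i = x$ would force $S_j(x) \neq S_j(x)$ for $j \neq i$. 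This is exactly the configuration forbidden by weak compatibility of $\mathcal{S}$, giving the contradiction.

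For part (2), fix $x \in X$. Since each $f_2^i \in \mathrm{E}'(d)$, the graph $(X, A(f_2^i))$ has no isolated vertices, so there exists $y_i \in X$ with $\{x,y_i\} \in A(f_2^i)$, i.e.\ $f_2^i(x) + f_2^i(y_i) = d(x,y_i)$. By Lemma~\ref{Lem:l14} applied to $\psi_{f_2^i}$, this equality forces $S(x) \neq S(y_i)$ for every $S \in \mathcal{S}(\psi_{f_2^i}) = \mathcal{S}(\mu_2^i)$. From here the argument is identical to (1): if no $y_i$ additionally satisfies $S_i(x) \neq S_i(y_i)$, then $S_i(x) = S_i(y_j) \iff i=j$, the four points $x,y_1,y_2,y_3$ are distinct by the same elementary check, and weak compatibility of the $d$-splits (which holds by Theorem~\ref{Thm:thm3}) is contradicted. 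Hence some $y_i$ serves simultaneously as the required $y$: it satisfies (a) by construction from $A(f_2^i)$, and (b) because $S(x) \neq S(y_i)$ now holds for every $S \in \mathcal{S}(\mu_2^i) \cup \{S_i\} = \mathcal{S}(\psi)$.

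The only delicate point is the distinctness check for the four-point configuration; everything else is a direct bookkeeping use of antipodality (in (1)) or of Lemma~\ref{Lem:l14} (in (2)) together with the weak-compatibility definition. No further structural input is needed.
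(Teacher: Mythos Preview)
Your proof is correct and follows essentially the same approach as the paper: pick antipodes $y_i$ for $x$ with respect to each $\mathcal{S}(\mu_2^i)$, and if none of them works for the full $\mathcal{S}(\psi)$, exhibit a violation of weak compatibility with the four points $x,y_1,y_2,y_3$ and the three splits $S_1,S_2,S_3$. The paper organizes part~(1) slightly differently---it treats the case $y_i=y_j$ as a separate positive case rather than folding it into the distinctness check under the contradiction hypothesis---but the content is identical, and your version is arguably tidier; part~(2) is handled in both via Lemma~\ref{Lem:l14} exactly as you do.
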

\begin{proof}
Let $x \in X$ be arbitrarily chosen. Since for every $i \in \{1,2,3\}$, $\mathcal{S}(\mu_2^i)$ is antipodal, there is $y^{i}_2 \in X$ such that for every $S \in \mathcal{S}(\mu_2^i)$, one has
\[
S(x) \neq S(y_2^i).
\]
Note now that if $S_{i}(x) \neq S_i(y_2^i)$, then for every $S \in \mathcal{S}(\psi)$, one has
\[
S(x) \neq S(y_2^i).
\]
Moreover, if $y_2^i = y_2^j$ with $i \neq j$, then
\[
S_i(x) \neq S_i(y_2^j) = S_i(y_2^i)
\]
and hence as above it follows that for every $S \in \mathcal{S}(\psi)$, one has
\[
S(x) \neq S(y_2^i).
\]
If we now assume that there are pairwise different points $\{x,y_2^1,y_2^2,y_2^3\}$ such that
\[
S_i(x) = S_i(y_2^j) \Longleftrightarrow i = j,
\]
then it follows that the points $\{x,y_2^1,y_2^2,y_2^3\}$ and the splits $\{S_1,S_2,S_3\}$ contradict the weak compatibility of $\mathcal{S}$. This proves the first assertion. 

The second assertion follows from the fact that by the last part of the statement of Lemma \ref{Lem:l14}, if for each $i \in \{1,2,3\}$, we pick $y_2^i \in X$ such that $\{x,y_2^i\} \in A(f_2^i)$, then $S(x) \neq S(y_2^i)$ for every $S \in \mathcal{S}(\mu_2^i)$, and thus by the above proof, we deduce that for some $i \in \{1,2,3\}$, one has $S(x) \neq S(y_2^i)$ for every $S \in \mathcal{S}(\psi)$ which implies that if we set $y:=y_2^i$, the second assertion follows.
\end{proof}

We now have:

\begin{Thm}\label{Thm:t12}
Let $(X,d)$ be a metric space with integer-valued totally split-decomposable metric satisfying the (LRC) and such that each cell of $\mathrm{E}(X,d)$ is a combinatorial hypercube. Then, $\mathrm{E}(X,d)$ satisfies the CAT(0) link condition.
\end{Thm}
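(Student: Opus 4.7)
The plan is to transfer the CAT(0) link condition from the Buneman complex (Lemma~\ref{Lem:l31}) to $\mathrm{E}(X,d)$ via the cell complex isomorphism $\kappa \colon \bar{T}(\mathcal{S},\alpha) \to \mathrm{E}(X,d)$ produced in Remark~\ref{Rem:inducedcell}, whose existence precisely requires the combinatorial hypercube hypothesis. Starting from seven cells $C, C_1^j, C_2^i$ of $\mathrm{E}(X,d)$ satisfying (A)--(C), first pull them back through $\lambda = \kappa^{-1}$ to obtain cells $[\mu], [\mu_1^j], [\mu_2^i]$ of $\bar{T}(\mathcal{S},\alpha)$ with the analogous combinatorial relations. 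As in the proof of Lemma~\ref{Lem:l31}, this yields three distinct splits $S_1, S_2, S_3 \in \mathcal{S} \setminus \mathcal{S}(\mu)$ with $\mathrm{supp}(\mu_2^i) = \mathrm{supp}(\mu) \cup \bigcup_{j \ne i} S_j$; choose $\psi \in H(\mathcal{S},\alpha)$ with $\mathrm{supp}(\psi) = \mathrm{supp}(\mu) \cup S_1 \cup S_2 \cup S_3$. Lemma~\ref{Lem:l31} immediately gives that $\psi \in B(\mathcal{S},\alpha)$ and that $[\psi]$ is a $(k+3)$-cell of the Buneman complex containing $\bigcup_i [\mu_2^i]$.

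The crucial step is to show that $\psi$ lies in the subcomplex $\bar{T}(\mathcal{S},\alpha)$, for then $\bar C := \kappa([\psi])$ is the required $(k+3)$-cell of $\mathrm{E}(X,d)$ containing $\bigcup_i C_2^i$. Lemma~\ref{Lem:l14} gives that each $\mathcal{S}(\mu_2^i)$ is antipodal, so by Lemma~\ref{Lem:l22}(1) the split system $\mathcal{S}(\psi) = \mathcal{S}(\mu) \cup \{S_1, S_2, S_3\}$ is antipodal as well. To verify the defining property of $\bar{T}(\mathcal{S},\alpha)$, suppose for contradiction that some family $\{A_\ell\}_{\ell \in L} \subset \mathrm{supp}(\psi)$ satisfies $\bigcup_\ell A_\ell = X$ and admits a common point $x \in \bigcap_\ell A_\ell$; write $A_\ell = T_\ell(x)$ for $T_\ell \in \mathcal{S}$. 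Setting $f_2^i := \kappa(\mu_2^i)$, Lemma~\ref{Lem:l22}(2) delivers $y \in X$ with $\{x,y\} \in A(f_2^{i_0})$ for some $i_0$ and $T(x) \neq T(y)$ for every $T \in \mathcal{S}(\psi)$.

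For those $\ell$ with $T_\ell \in \mathcal{S}(\psi)$, the antipodality built into the choice of $y$ directly gives $y \notin A_\ell$. For those $\ell$ with $T_\ell \notin \mathcal{S}(\psi)$, since $S_1, S_2, S_3 \in \mathcal{S}(\psi)$ one has $T_\ell \neq S_j$ for every $j$, whence $A_\ell \in \mathrm{supp}(\mu) \subset \mathrm{supp}(\mu_2^{i_0})$ yet $\bar A_\ell \notin \mathrm{supp}(\mu_2^{i_0})$, so $T_\ell \notin \mathcal{S}(\mu_2^{i_0})$ and $\mu_2^{i_0}(A_\ell) = \alpha_{T_\ell}/2 > 0$. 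If one had $T_\ell(y) = T_\ell(x) = A_\ell$, then applying equality~\eqref{eq:132} from the proof of Lemma~\ref{Lem:l14} (with $\phi = \mu_2^{i_0}$ and $S = T_\ell$) would force $\alpha_{T_\ell}\delta_{T_\ell}(x,y) = 4\mu_2^{i_0}(A_\ell) > 0$, contradicting $\delta_{T_\ell}(x,y) = 0$. Hence $y \notin A_\ell$ in this case too, contradicting $\bigcup_\ell A_\ell = X$. The main obstacle is exactly this verification that $\psi \in \bar{T}(\mathcal{S},\alpha)$: Lemma~\ref{Lem:l31} only produces the cell in the larger complex $B(\mathcal{S},\alpha)$, and while antipodality of $\mathcal{S}(\psi)$ takes care of splits fully supported by $\psi$, the splits only partially supported by $\psi$ require combining the edge condition of Lemma~\ref{Lem:l22}(2) with the distance-preservation identity~\eqref{eq:132}.
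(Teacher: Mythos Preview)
Your proof is correct and follows essentially the same overall strategy as the paper: transfer the seven cells to $\bar{T}(\mathcal{S},\alpha)$, produce $\psi \in B(\mathcal{S},\alpha)$ via Lemma~\ref{Lem:l31}, invoke Lemma~\ref{Lem:l22} to obtain the antipodal point $y$, and then push back to $\mathrm{E}(X,d)$ via $\kappa$.

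The one genuine difference is in how the crucial membership $\psi \in \bar{T}(\mathcal{S},\alpha)$ (equivalently $\kappa(\psi)\in\mathrm{E}'(d)$, by Lemma~\ref{Lem:l11}) is established. The paper works on the $\mathrm{E}'(d)$ side: starting from $\{x,y\}\in A(f_2^{i_0})$ it carries out an explicit chain of distance identities (equations~\eqref{eq:e31}--\eqref{eq:e333}) to show directly that $\kappa(\psi)(x)+\kappa(\psi)(y)=d(x,y)$ for every $x$, and only then concludes via Lemma~\ref{Lem:l16} that $[\kappa(\psi)]$ is the desired $(k+3)$-cell. You instead verify the defining condition of $\bar{T}(\mathcal{S},\alpha)$ by contradiction, splitting $\mathrm{supp}(\psi)$ into the fully supported splits (handled by antipodality of $\mathcal{S}(\psi)$) and the partially supported ones (handled by a single appeal to identity~\eqref{eq:132}); you then finish using the cell-complex isomorphism of Remark~\ref{Rem:inducedcell}. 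Your route is shorter and more conceptual, since it reuses \eqref{eq:132} rather than rederiving its content, and it exploits the bijectivity of $\kappa$ (available under the hypercube hypothesis) to read off $\dim(\bar C)=k+3$ immediately. The paper's computation, on the other hand, does not rely on knowing in advance that $\kappa$ is injective; it produces $[\kappa(\psi)]$ as a cell of $\mathrm{E}(d)$ containing the configuration without invoking Remark~\ref{Rem:inducedcell} at that step.
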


\begin{proof}[Proof of Theorem~\ref{Thm:t12}]
Assume that there are cells
\[
[f],[f_1^1],[f_1^2],[f_1^3],[f_2^1],[f_2^2],[f_2^3] \subset \mathrm{E}'(d)
\]
such that
\begin{enumerate}[$(i)$]
\item $[f] = \bigcap_{i \in \{1,2,3\}} \mathrm{supp}[f_2^i]$,
\item $[f_1^j] = \bigcap_{i \in \{1,2,3\} \setminus \{ j \}} \mathrm{supp}[f_2^i]$,
\item $\mathrm{dim}([f]) = k \ge 0$ and for each $i \in \{1,2,3\}$, one has $\mathrm{dim}([f_1^i]) = k+1$ as well as $\mathrm{dim}([f_2^i]) = k+2$.
\end{enumerate}
Since all cells of $\mathrm{E}'(d)$ are hypercubes, it is easy to see (using Remark \ref{Rem:cellsdimension} to see that $\mathrm{dim}([\psi_f]) =|\mathcal{S}(\psi_f)|= k$ as well as for the other similar equalities in $(iii)$) that one has
\begin{enumerate}[$(i)$]
\item $[\psi_f] = \bigcap_{i \in \{1,2,3\}} \mathrm{supp}[\psi_{f_2^i}]$,
\item $[\psi_{f_1^j}] = \bigcap_{i \in \{1,2,3\} \setminus \{ j \}} \mathrm{supp}[\psi_{f_2^i}]$,
\item $\mathrm{dim}([\psi_f]) =|\mathcal{S}(\psi_f)|= k$ and for each $j \in \{1,2,3\}$, one has $\mathrm{dim}([\psi_{f_1^j}]) =|\mathcal{S}(\psi_{f_1^j})|= k+1$ as well as $\mathrm{dim}([\psi_{f_2^i}]) = |\mathcal{S}(\psi_{f_2^i})| = k+2$.
\end{enumerate}
By Lemma \ref{Lem:l31}, there is $\psi \in B(\mathcal{S},\alpha)$ such that
\begin{enumerate}[$(a)$]
\item $\mathrm{supp}(\psi) = \mathrm{supp}(\psi_f) \cup \bigcup_{i \in \{1,2,3\}} S_i$ and
\item $\mathcal{S}(\psi) = \mathcal{S}(\psi_f) \cup \{ S_1,S_2,S_3\}$.
\end{enumerate}
Let $x \in X$ be chosen arbitrarily, by Lemma \ref{Lem:l22} there is $y \in X$ such that for every $S \in \mathcal{S}(\psi)$, one has $S(x) \neq S(y)$ and without loss of generality $\{x,y\} \in A(f_2^1)$. For the sake of simplicity, we set $g := f_2^1$. It follows from \eqref{eq:1116} that 
\[
d_1(\phi_x,\phi_y) = d_1(\phi_x, \gamma^x_{[\psi_g]}) + d_1(\gamma^x_{[\psi_g]}, \gamma^y_{[\psi_g]}) + d_1( \gamma^y_{[\psi_g]},\phi_y)
\]
which is easily seen to imply
\begin{equation}\label{eq:e31}
\sum_{S \in \mathcal{S} \setminus \mathcal{S}(\psi_g)} \alpha_S \delta_S(x,y) =
d_1(\phi_x, \gamma^x_{[\psi_g]}) + d_1(\phi_y, \gamma^y_{[\psi_{g}]}).
\end{equation}
On the other hand, starting from the definition of $\phi_x$ and $\gamma^x_{[\psi_g]}$, we have 
\[
d_1(\phi_x, \gamma^x_{[\psi_g]}) = \sum_{\substack{A \in U(\mathcal{S} \setminus \mathcal{S}(\psi_g) ) \\ x \in A} } |0 - \psi_g(A)| +
\sum_{\substack{A \in U(\mathcal{S} \setminus \mathcal{S}(\psi_g) ) \\ x \notin A} } |\alpha_{S_A}/2 - \psi_g(A)|
\]
and analogously
\[
d_1(\phi_y, \gamma^y_{[\psi_g]}) = \sum_{\substack{A \in U(\mathcal{S} \setminus \mathcal{S}(\psi_g) ) \\ y \in A} } |0 - \psi_g(A)| +
\sum_{\substack{A \in U(\mathcal{S} \setminus \mathcal{S}(\psi_g) ) \\ y \notin A} } |\alpha_{S_A}/2 - \psi_g(A)|.
\]
Hence, using the fact that $g \in \mathrm{E}'(d)$ and thus $\psi_g \in H(\mathcal{S},\alpha)$, cf. proof of Lemma \ref{Lem:l12}, we obtain a second expression for the right-hand side of \eqref{eq:e31}, namely
\begin{align}
d_1(\phi_x, \gamma^x_{[\psi_g]}) + d_1(\phi_y, \gamma^y_{[\psi_g]}) = 
&\sum_{\substack{A \in U(\mathcal{S} \setminus \mathcal{S}(\psi_g) ) \\ x,y \in A} } 2 \psi_g(A) \nonumber \\
&+ \sum_{\substack{A \in U(\mathcal{S} \setminus \mathcal{S}(\psi_g) ) \\ x \in A, y \notin A} } \alpha_{S_A}/2 \nonumber \\
& + \sum_{\substack{A \in U(\mathcal{S} \setminus \mathcal{S}(\psi_g) ) \\ x \notin A, y \in A} } \alpha_{S_A}/2 \nonumber \\
&+ \sum_{\substack{A \in U(\mathcal{S} \setminus \mathcal{S}(\psi_g) ) \\ x,y \notin A} } 2 |\alpha_{S_A}/2 - \psi_g(A)|. \label{eq:e32}
\end{align}
Since the sum of the second and third term of the right-hand side of \eqref{eq:e32} amounts to $\sum_{S \in \mathcal{S} \setminus \mathcal{S}(\psi_g)} \alpha_S \delta_S(x,y)$, comparing \eqref{eq:e31} and \eqref{eq:e32} we obtain
\[
\sum_{\substack{A \in U(\mathcal{S} \setminus \mathcal{S}(\psi_g) ) \\ x,y \in A} } 2 \psi_g(A) +
\sum_{\substack{A \in U(\mathcal{S} \setminus \mathcal{S}(\psi_g) ) \\ x,y \notin A} } 2 |\alpha_{S_A}/2 - \psi_g(A)|
=0
\]
hence for every $A \in U(\mathcal{S} \setminus \mathcal{S}(\psi_g) )$ such that $x,y \in A$, one has $\psi_g(A) = 0$ or in other words, for any $S \in \mathcal{S} \setminus \mathcal{S}(\psi_g)$ such that $S(x) = S(y)$, one has $\psi_g(S(x)) = 0 = \psi_g(S(y))$. Now, note that by definition of $\psi$, one has $\mathrm{supp}(\psi_g)  \subset \mathrm{supp}(\psi)$ as well as $\mathcal{S} \setminus \mathcal{S}(\psi) \subset \mathcal{S} \setminus \mathcal{S}(\psi_g)$ and thus for every $\mathcal{S} \setminus \mathcal{S}(\psi)$ such that $S(x) = S(y)$, one has $\psi(S(x))=0$. It is easy to see that this implies that for every $S \in \mathcal{S} \setminus \mathcal{S}(\psi)$, since $\psi \in H(\mathcal{S},\alpha)$, one has
\begin{equation}\label{eq:e311}
\alpha_S \delta_S(x,y) = 2 \bigl[ \psi(S(x)) + |\phi_y(S(x)) - \psi(S(x)) |  \bigr].
\end{equation}
Moreover, one easily obtains
\begin{align}
&\sum_{S \in \mathcal{S} \setminus \mathcal{S}(\psi)} 2 \bigl[ \psi(S(x)) + |\phi_y(S(x)) - \psi(S(x)) | \bigr]  \nonumber \\
&=\sum_{S \in \mathcal{S} \setminus \mathcal{S}(\psi)} 
\begin{aligned}[t]
&\bigl[ |\phi_x(S(x)) -  \psi(S(x)) | + |\phi_y(S(x)) - \psi(S(x)) |  \nonumber  \\
&+  |\phi_x(\overline{S(x)}) -  \psi(\overline{S(x)}) | + |\phi_y(\overline{S(x)}) - \psi(\overline{S(x)}) | \bigr] 
\end{aligned}\\
&= \sum_{A \in U(\mathcal{S} \setminus \mathcal{S}(\psi))} \bigl[ |\phi_x(A) -  \psi(A) | + |\phi_y(A) - \psi(A) | \bigr].\label{eq:e3111}
\end{align}
Furthermore, $y$ was chosen so that for every $S \in \mathcal{S}(\psi)$, one has $S(x) \neq S(y)$. Thus for every $\bar{\psi} \in [\psi]$, one obtains:
\begin{align}
&\alpha_S\delta_S(x,y) \nonumber \\
&= \alpha_S \nonumber \\
&= [ 0 + \bar{\psi}(S(x))] + [\alpha_S/2 - \bar{\psi}(S(x))] + [\alpha_S/2 - \bar{\psi}(\overline{S(x)})] + [ \bar{\psi}(\overline{S(x)}) - 0] \nonumber \\
&=\sum_{A \in S} \left[ |\phi_x(A) -  \bar{\psi}(A) | + |\phi_y(A) - \bar{\psi}(A) | \right] \label{eq:e322}
\end{align}
and thus since $d_1(\gamma^x_{[\psi]},\gamma^y_{[\psi]}) = \sum_{S \in \mathcal{S}(\psi)} \alpha_S\delta_S(x,y)$ together with \eqref{eq:e322} and since for every $A \in U(\mathcal{S} \setminus \mathcal{S}(\psi))$, one has $\bar{\psi}(A) = \psi(A)$, it follows that
\begin{align}
d_1(\gamma^x_{[\psi]},\gamma^y_{[\psi]})
&= \sum_{A \in U(\mathcal{S}(\psi))} \left[ |\phi_x(A) -  \bar{\psi}(A) | + |\phi_y(A) - \bar{\psi}(A) | \right] \nonumber \\
&= d_1(\gamma^x_{[\psi]}, \bar{\psi} )+d_1(\bar{\psi}, \gamma^y_{[\psi]}).\label{eq:e3222}
\end{align}
Hence
\begin{align}
d_1(\phi_x,\phi_y) 
&= \sum_{S \in \mathcal{S}} \alpha_S\delta_S(x,y) \nonumber \\
&= \sum_{S \in \mathcal{S}(\psi)} \alpha_S\delta_S(x,y) + \sum_{S \in \mathcal{S} \setminus \mathcal{S}(\psi)} \alpha_S\delta_S(x,y) \nonumber \\
&= d_1( \gamma^x_{[\psi]},\gamma^y_{[\psi]}) + 
\sum_{S \in \mathcal{S} \setminus \mathcal{S}(\psi)} 2 \bigl[ \psi(S(x)) + |\phi_y(S(x)) - \psi(S(x)) | \bigr] \nonumber \\
&= d_1( \gamma^x_{[\psi]},\gamma^y_{[\psi]}) + 
\sum_{A \in U(\mathcal{S} \setminus \mathcal{S}(\psi))} \bigl[ |\phi_x(A) -  \psi(A) | + |\phi_y(A) - \psi(A) | \bigr] \nonumber \\
&=d_1(\phi_x, \gamma^x_{[\psi]}) + d_1(\gamma^x_{[\psi]},\gamma^y_{[\psi]}) +  d_1(\gamma^y_{[\psi]},\phi_y).\label{eq:e32222}
\end{align}
where the third equality follows from \eqref{eq:e311}, the fourth one from \eqref{eq:e3111} and the last one by our definitions. Hence, inserting \eqref{eq:e3222} into \eqref{eq:e32222}, one has:
\begin{align}
d_1(\phi_x,\phi_y) = d_1(\phi_x, \gamma^x_{[\psi]}) + d_1(\gamma^x_{[\psi]}, \psi) + d_1(\psi,\gamma^y_{[\psi]}) +  d_1(\gamma^y_{[\psi]},\phi_y). \label{eq:e333}
\end{align}
It follows from $d(x,y) = d_1(\phi_x,\phi_y)$ and \eqref{eq:e333} that
\begin{align}
d(x,y) = 
d_1(\phi_x, \psi) 
+ d_1(\psi,\phi_y)
=\kappa(\psi)(x) + \kappa(\psi)(y).
\end{align}
Since for any $x \in X$, there is such an $y \in X$, it follows that $\kappa(\psi) \in \mathrm{E}'(d)$. Moreover, by definition of $\psi$, one has
\[
\bigcup_{i \in \{1,2,3\} } \kappa([\psi_{f_2^i}]) 
= \kappa \left(\bigcup_{i \in \{1,2,3\} } [\psi_{f_2^i}] \right)
\subset \kappa([\psi])
\subset [\kappa(\psi)]
\]
where the last inclusion follows from Lemma \ref{Lem:l16}. Now, since $[\kappa(\psi)]$ is a hypercube, this proves that $\mathrm{E}(d) = \mathrm{E}'(d)$ satisfies the CAT(0) link condition.
\end{proof}

Two splits $S:=\{A,B\}$ and $S'=\{A',B'\}$ of $X$ are called \textit{incompatible} if 
\[
A \cap A', A \cap B', B \cap A', B\cap B' \neq \emptyset.
\]
A split system $\mathcal{S}$ is called \textit{incompatible} if any pair of splits in $\mathcal{S}$ is incompatible.

\begin{Rem}
For any split system pair $(\mathcal{S},\alpha)$ on a set $X$, the associated Buneman complex $B(\mathcal{S},\alpha)$ displays some similarities with the CAT(0) cube complex that is constructed in \cite{ChaN}. There, a split system pair is obtained by considering a wall space $\mathcal{W}$ (corresponding to $\mathcal{S}$) on a set $Y$ (corresponding to $X$) and taking the function $\alpha$ chosen to be constantly equal to one on $\mathcal{S}$.

For a Coxeter group, its Cayley graph is endowed with the standard word metric and there is a canonical decomposition (in general not weakly compatible, e.g. $\Delta(3,3,3)$) of this metric given by the splits of the form $S:=\{C(x,y),C(y,x)\}$ where $\{x,y\}$ is an edge in the Cayley graph and with $\alpha$ to be constantly equal to one.

The set $\mathrm{K}^0$ (denoted by $X^0$ in \cite{ChaN}) is defined to be consisting of all the \textit{admissible sections}, i.e. the maps $\sigma \colon \mathcal{S} \to U(\mathcal{S})$ such that for any $S \neq S'$: $\sigma(S) \cap  \sigma(S') \neq \emptyset$. Next, $\mathrm{K}^1$ is the graph with vertex set $\mathrm{K}^0$, where two vertices $\sigma$ and $\sigma'$ are connected by an edge if and only if there is a unique $S \in \mathcal{S}$ such that $\sigma(S) \neq \sigma'(S)$.  For an arbitrarily fixed point $p \in X$, one then lets $\Gamma_p$ be the path-connected component of $\sigma_p$ in $\mathrm{K}^1$ where for any $S$, one lets $\sigma_p(S):=S(p)$.

Let us define $B^p(\mathcal{S},\alpha):=\{ \psi \in B(\mathcal{S},\alpha): d_1(\psi,\phi_p) < \infty\}$, let $\Sigma^0(\Gamma_p)$ be the $0$-skeleton of $\Gamma_p$, and let $\Sigma^0(B^p(\mathcal{S},\alpha))$ and $\Sigma^1(B^p(\mathcal{S},\alpha))$ be the $0$- and $1$-skeleton of $B^p(\mathcal{S},\alpha)$. We define
\[
M \colon (\Sigma^0(\Gamma_p),d_1) \to (\Sigma^0(B^p(\mathcal{S},\alpha)),d_1)
\]
by sending every admissible section $\sigma \colon \mathcal{S} \to U(\mathcal{S})$  to a function $M(\sigma) \colon U(\mathcal{S}) \to \R$ defined by assigning (recall that $S_A:=\{A,A^c\}$):
\[
A  \; \mapsto \; \left\lbrace{\begin{matrix} 
\frac{\alpha_{S_A}}{2} &\text{if }  A = \sigma(S)^c , \\ \\
0 &\text{if }  A = \sigma(S).
\end{matrix}}\right.
\]
Now assume that $\{\sigma,\tau\}$ is an edge of $\Gamma_p$ which means that there is a unique $S':=\{A',B'\} \in \mathcal{S}$ such that $\sigma(S') \neq \tau(S')$. This is equivalent to the fact that $M(\sigma)(A') = M(\tau)(B')$, $M(\sigma)(B') = M(\tau)(A')$ and for any $A \in U(\mathcal{S} \setminus \{ S'\})$, one has $M(\sigma)(A)=M(\tau)(A)$. It is easy to see, that this is in turn equivalent to the fact that there is a function $\psi \in \Sigma^1(B^p(\mathcal{S},\alpha))$ such that $\mathrm{dim}([\psi])=1$, $S' \subset \mathrm{supp}(\psi)$ and so that for every $A \in U(\mathcal{S} \setminus \{ S'\})$, one has $M(\sigma)(A)=\psi(A) = M(\tau)(A)$. Therefore, $M$ extends bijectively to an isometric isomorphism of cell complexes 
\[
M_1 \colon (\Gamma_p,d_1) \to (\Sigma^1(B^p(\mathcal{S},\alpha)),d_1).
\]
Let us denote an edge $e_j$ of $\Gamma_p$ by its corresponding labeling split $S_j$ (the unique one on which the endpoints of $e_j$ differ). Now, \textit{$k$-corners} $(\sigma,\{e_1,\dots,e_k\})$ (in the terminology of \cite{ChaN}) are simply pairs of the form $(\sigma,\{S_1,\dots,S_k\})$ where $\sigma \in \Gamma_p$ and where the split system $\{S_1,\dots,S_k\}$ is incompatible. The complex $\mathrm{K}$ is then obtained by gluing a $k$-cube to every $k$-corner (one shows that the existence of a $k$-corner implies the existence of the $1$-skeleton of a $k$-hypercube contained in $\Gamma_p$ and containing this $k$-corner as a vertex). It is now easy to see that there is an isomorphism of cell complexes 
\[
i \colon \mathrm{K} \to B^p(\mathcal{S},\alpha)
\]
which extends $M_1$. To any $k$-dimensional cube $C$ giving rise to a cell of $\mathrm{K}$ corresponds by construction a $k$-corner $(\sigma,\{S_1,\dots,S_k\})$. It is then easy to see that defining $\psi \in H(\mathcal{S},\alpha)$ so that $\psi(A):=\sigma(A)$ for $A \in U(\mathcal{S} \setminus \{S_1,\dots,S_k\})$ and $\psi(A) := \alpha_{S_A}/4$ otherwise, we obtain that $\psi \in B^p(\mathcal{S},\alpha)$ and $[\psi]$ is a $k$-dimensional cell of $B^p(\mathcal{S},\alpha)$, hence a $k$-dimensional combinatorial hypercube. We can thus extend $M_1$ and map bijectively $C$ to $[\psi]$ with $i$. Conversely, let $\sigma \in \Sigma^0(B^p(\mathcal{S},\alpha))$ be a vertex of a $k$-dimensional cell $[\psi]$ of $B^p(\mathcal{S},\alpha)$. The pair $(\sigma,\mathcal{S}(\psi))$ has to be a $k$-corner in $\Gamma_p$ by incompatiblity of $\mathcal{S}(\psi)$ and thus the inverse image of $[\psi]$ under $i$ is the $k$-dimensional cube in $\mathrm{K}$ glued to the $k$-corner $(\sigma,\mathcal{S}(\psi))$.
\end{Rem}

\section{Examples}

\begin{Expl}\label{Expl:example0}
Let $(X,d)$ be an infinite connected graph endowed with the shortest-path metric. It is easy to see that if $X$ is bipartite, then the system $\mathcal{S}$ of all $d$-splits of $(X,d)$ is octahedral-free. Indeed, assume on the contrary that $X = Y^1_1 \sqcup Y^{-1}_1 \sqcup Y^1_2 \sqcup Y^{-1}_2 \sqcup Y^1_3 \sqcup Y^{-1}_3$ is a partition of $X$ into six non-empty subsets such that 
\begin{align}
S_1 &:= \{ Y^1_1 \sqcup Y^1_2 \sqcup  Y^1_3 , Y^{-1}_1 \sqcup Y^{-1}_2 \sqcup Y^{-1}_3 \}, \nonumber \\
S_2 &:= \{ Y^1_1 \sqcup Y^1_2 \sqcup Y^{-1}_3 , Y^{-1}_1 \sqcup Y^{-1}_2 \sqcup Y^1_3 \}, \nonumber\\
S_3 &:= \{ Y^1_1 \sqcup Y^{-1}_2 \sqcup Y^1_3 , Y^{-1}_1 \sqcup Y^1_2 \sqcup Y^{-1}_3 \}, \nonumber\\
S_4 &:= \{ Y^1_1 \sqcup Y^{-1}_2 \sqcup Y^{-1}_3 , Y^{-1}_1 \sqcup Y^1_2 \sqcup Y^1_3 \}, \label{eq:octahedralsystem}
\end{align}
and $\{S_1,\dots,S_4\} \subset \mathcal{S}$. Since $X$ is a connected graph, there must be an edge $\{x,y\}$ between two non-antipodal sets in the partition, for instance an edge joining $x \in Y^1_1$ to $y \in Y^{-1}_1$. Now, we see that both $S_3(x) \neq S_3(y)$ and $S_4(x) \neq S_4(y)$. This is a contradiction to the fact that since $X$ is bipartite, there is for any edge $\{x,y\}$ in $X$, at most one $d$-split separating $x$ and $y$ namely the split given by $\{C(x,y),C(y,x)\}$.
\end{Expl}

\begin{Expl}\label{Expl:example1}
For $n \in \mathbb{N}$, let
\[
C_{2n+1}:=\bigl(\{x_1,\dots,x_{2n+1}\},\{\{x_i,x_{i+1}\}\}_{i \in \{1,\dots,2n+1\}} \bigr) \ \text{ where } \ x_{2n+2}:=x_1.
\]
The graph $C_{2n+1}$ is the odd cycle with $2n+1$ vertices and we endow it with the shortest path metric $d$. We use the fact that the metric $d$ is totally split-decomposable to give an explicit description of the injective hull $\mathrm{E}(C_{2n+1},d)$. 

One can easily verify that $d = \frac{1}{2} \sum_{S \in \mathcal{S}} \delta_S$ where $\mathcal{S}$ is the set of all $d$-splits of $X$, hence $\alpha \colon \mathcal{S} \to (0,\infty)$ can be chosen to be constantly equal to $\frac{1}{2}$. One has $\mathcal{S} =\{S_1,\dots,S_{2n+1}\}$ where for $i \in \{1,\dots,2n+1\}$, $S_i := \{A_i,B_i\}$ is the unique split of $C_{2n+1}$ such that $x_i \in A_i$, $|A_i|=n+1$, $|B_i|=n$ and such that both $C(x_{i+1},x_i) \subset A_i$ and $C(x_i,x_{i+1}) \subset B_i$. Hence, (with indices taken modulo $2n+1$), one has 
\[
S_i = \{ A_i,B_i \} = \{ \{x_{i+1}, \dots,x_{i+n} \}, \{x_{i+n+1}, \dots,x_i \} \}.
\]
Note moreover that both $S_i$ and $S_{i+n}$ cut the edge $\{x_i,x_{i+1}\}$ for $i \in \{1,\dots,n+1\}$ and both $S_i$ and $S_{i-n}$ cut the edge $\{x_i,x_{i+1}\}$ for $i \in \{n+2,\dots,2n+1\}$.

It is now not difficult to prove that the assumptions of Theorem \ref{Thm:t11} are fulfilled for $(X,d) = (C_{2n+1},d)$. Since we are in the case of a finite metric space, the (LRC) is trivially satisfied. Moreover, $d = \frac{1}{2} \sum_{S \in S} \delta_S$, $(ii)$ where $\mathcal{S}$ is the family of all $d$-splits of $X$. Finally, it is not difficult to see that $\mathcal{S}$ is octahedral-free and thus that \eqref{it:it1} in Theorem \ref{Thm:t11} holds as well. Indeed, if a subsystem $\bar{\mathcal{S}}:=\{\bar{S}_1,\bar{S}_2,\bar{S}_3,\bar{S}_4\} \subset \mathcal{S}$ is octahedral, that is, it is induced by a partition into six non-empty subsets
\[
C_{2n+1} = Y^1_1 \sqcup Y^{-1}_1 \sqcup Y^1_2 \sqcup Y^{-1}_2 \sqcup Y^1_3 \sqcup Y^{-1}_3
\]
as in \eqref{eq:octahedralsystem}, then for any $x \in Y^{\sigma}_i$, for any $y \in Y^{-\sigma}_i$ and for every $S \in \bar{\mathcal{S}}$, one has $S(x) \neq S(y)$.
In order for such a pair $\{x,y\}$ to exist for every $x \in C_{2n+1}$, it follows that if $\bar{S}_j \in \bar{\mathcal{S}}$ with $1\le j \le n+1$ (the case $n+2\le j \le 2n+1$ is similar), then $\bar{S}_j,\bar{S}_{j+n+1} \notin \bar{\mathcal{S}}$. Indeed, there are exactly two splits that cut one of the edges $\{ \{x_j,x_{j+1} \} , \{x_{j+n},x_{j+n+1} \} \}$ which are cut by $\bar{S}_j$, namely $\bar{S}_{j+n}$ which cuts $\{x_{j+n},x_{j+n+1} \}$ and $\bar{S}_{j+n+1}$ which cuts $\{x_j,x_{j+1} \}$. Since we are considering $C_{2n+1}$, it follows that $\bar{\mathcal{S}}$ must induce a partition into eight non-empty subsets 
\[
C_{2n+1} = Z^1_1 \sqcup Z^{-1}_1 \sqcup Z^1_2 \sqcup Z^{-1}_2 \sqcup Z^1_3 \sqcup Z^{-1}_3 \sqcup Z^1_4 \sqcup Z^{-1}_4
\]
such that $S(x) \neq S(y)$ for every $S \in \bar{\mathcal{S}}$ if and only if $x \in Z^{\sigma}_i$ and $y \in Z^{-\sigma}_i$. It follows that $\bar{\mathcal{S}}$ is not octahedral and thus $\mathcal{S}$ must be octahedral-free, which shows in particular that \eqref{it:it1} in Theorem~\ref{Thm:t11} holds. We deduce that for every $n \in \mathbb{N} \cup \{0 \}$, $\mathrm{E}(C_{2n+1},d)$ is a finite cube complex satisfying the CAT(0) link-condition (and simply connected since it is an injective hull).

We can furthermore describe explicitely the dimension and gluing pattern of the maximal cells of $\mathrm{E}(C_{2n+1},d)$ by studying the different split susbsystems of $\mathcal{S}$. Note first that by finiteness, we have $B(\mathcal{S},\alpha)= \bar{T}(\mathcal{S},\alpha)$ (see \cite{DreHM}) and by Remark~\ref{Rem:inducedcell} the map 
\[
\kappa \colon \bar{T}(\mathcal{S},\alpha) \to \mathrm{E}(C_{2n+1},d)
\]
is in particular an isomorphism of cell complexes. The family of maximal cells of $\mathrm{E}(C_{2n+1},d)$ is thus in bijection with the family of  maximal cells of $B(C_{2n+1},\mathcal{S})$ which in turn bijectively corresponds with the family $\mathfrak{M}$ of maximal incompatible split subsystems $\mathcal{M} \subset \mathcal{S}$. 

Observe that for any such $\mathcal{M} \in \mathfrak{M}$, we can consider a corresponding element $\psi \in \bar{T}(\mathcal{S},\alpha)$ such that $\mathcal{S}(\psi)=\mathcal{M}$. We have $\kappa(\psi)(x) + \kappa(\psi)(y)=d(x,y)$ if and only if $S(x) \neq S(y)$ for every $S \in \mathcal{M}$. Since $\mathcal{M}$ is a maximal incompatible split subsystem of $\mathcal{S}$, it follows that for any $S =\{A,B\} \in \mathcal{S} \setminus \mathcal{M}$ where $|A|=k$ and $|B|=k+1$, one has $\psi(A) = \alpha_S/2 = 1/4$ and $\psi(B) =0$.

For the gluing pattern, we have for any two maximal cells $[\psi],[\mu] \subset \bar{T}(\mathcal{S},\alpha)$, $[\psi] \cap [\mu] \neq 0$ if and only if $\psi(A) = \mu(A)$ for every $A \in U(\mathcal{S} \setminus (\mathcal{S}(\psi) \cup \mathcal{S}(\mu)))$ and in this case, $[\psi] \cap [\mu]$ is the set of all functions $\phi \in B(\mathcal{S},\alpha)$ such that $\phi(A)=\psi(A)$ for $A \in U(\mathcal{S}(\psi) \setminus \mathcal{S}(\mu)))$, $\phi(A)=\mu(A)$ for $A \in U(\mathcal{S}(\mu) \setminus \mathcal{S}(\psi)))$ and $\mathcal{S}(\phi) = \mathcal{S}(\psi) \cap \mathcal{S}(\mu)$.

Note that for any $\bar{S}_j \in \bar{\mathcal{S}}$, the only two splits that are not incompatible with $\bar{S}_j$ are the only two splits that cut an edge already cut by $\bar{S}_j$.

To compute $|\mathfrak{M}|$, it is easier to describe the split system $\mathcal{S}$ in a different way, by assigning for $i \in \{1,\dots,n+1\}$, to every edge $\{x_i,x_{i+1}\}$ of $C_{2k+1}$, the pair of splits $S^1_i=\{A^1_i,B^1_i\}$ and $S^{-1}_i=\{A^{-1}_i,B^{-1}_i\}$ which cut the edge $\{x_i,x_{i+1}\}$ and which are determined by the requirements $x_i \in A^1_i$ and $|A^1_i|=k+1$ as well as $x_i \in A^{-1}_i$ and $|A^{-1}_i|=k$. Note that we have $S^1_1=S^{-1}_{n+1}$ and the connection with our previous description is given by $S^1_i=S_i$ and $S^{-1}_i=S_{n+i+1}$.


We divide the family $\mathcal{M}$ of maximal incompatible split subsystems of $\mathcal{S}$ into three subfamilies 
\[
\mathfrak{M}= \mathfrak{M}^a \cup \mathfrak{M}^b \cup \mathfrak{M}^c
\]
so that for $\mathcal{M} := \{S_{j_1}^{\sigma_1},\dots,S_{j_k}^{\sigma_k}\}  \in \mathfrak{M}$ with $1 \le j_1 < j_2 < \dots < j_{k-1} < j_k \le 2n+1$, one has the following three cases:

\pagebreak

\begin{enumerate}[$(a)$]
\item $\mathcal{M} \in \mathfrak{M}^a$ if and only if $S_{j_1}^{\sigma_1}=S^1_1$. In this case, we have $|\mathfrak{M}^a|=\Sigma^a_{(n-1,n-1)} + \Sigma^a_{(n-1,n)}$ where $\Sigma_{(n-1,n-1)}:=\Sigma^a_{(n-1,n-1)}$ stands for the case where $S_{j_k}^{\sigma_k}=S_n^{-1}$ and $\Sigma_{(n-1,n)}:=\Sigma^a_{(n-1,n)}$ for the case where $S_{j_k}^{\sigma_k}=S_n^{1}$. The notation $\Sigma_{(n-1,n-1)}$ refers to the fact that $|B_{j_1}^{\sigma_1} \cap A_{j_k}^{\sigma_k}|=n-1$ and $|A_{j_1}^{\sigma_1} \cap B_{j_k}^{\sigma_k}|=n-1$ (i.e. starting with $x_2$ and going counterclockwise we count $n-1$ points until we hit $S_{j_k}^{\sigma_k}$ and similarly when starting with $x_{n+2}$ and until we hit $S_{j_k}^{\sigma_k}$ we count $n-1$ points as well).

\begin{figure}[h]
\centering
\def\svgwidth{0.8\textwidth}
\input{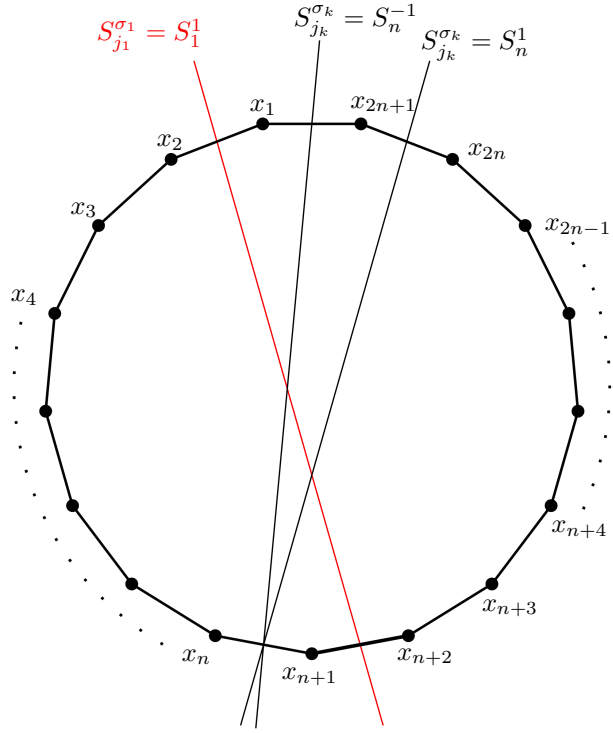}
\caption{$\Sigma^a_{(n-1,n-1)}$ corresponds to the number of maximal incompatible split systems $\{S_{j_1}^{\sigma_1},\dots,S_{j_k}^{\sigma_k}\}$ with $S_{j_1}^{\sigma_1}=S^1_1$ and $S_{j_k}^{\sigma_k}=S_n^{-1}$. 
$\Sigma^a_{(n-1,n)}$ is the number of those with $S_{j_1}^{\sigma_1}=S^1_1$ and $S_{j_k}^{\sigma_k}=S_n^{1}$.}
\label{fig:f1}
\end{figure}

\item $\mathcal{M} \in \mathfrak{M}^b$ if and only if $S_{j_1}^{\sigma_1}=S^{-1}_1$. In this case, we have $|\mathfrak{M}^b|=\Sigma^b_{(n-1,n-1)} + \Sigma^b_{(n-1,n)}$
where by symmetry $\Sigma^b_{(n-1,n-1)}=\Sigma_{(n-1,n-1)}$ stands for the case where $S_{j_k}^{\sigma_k}=S_n^{-1}$ and $\Sigma^b_{(n-1,n)}=\Sigma_{(n-1,n)}$ for the case where $S_{j_k}^{\sigma_k}=S_{n+1}^{1}$.

\begin{figure}[h]
\centering
\def\svgwidth{0.8\textwidth}
\input{mael-injective-hull-odd-cycles-recurrence1-version3-case1b-27-01-15.tex}
\caption{$\Sigma^b_{(n-1,n-1)}$ corresponds to the number of maximal incompatible split systems $\{S_{j_1}^{\sigma_1},\dots,S_{j_k}^{\sigma_k}\}$ with $S_{j_1}^{\sigma_1}=S^{-1}_1$ and $S_{j_k}^{\sigma_k}=S_n^{-1}$. 
$\Sigma^b_{(n-1,n)}$ is the number of those with $S_{j_1}^{\sigma_1}=S^{-1}_1$ and $S_{j_k}^{\sigma_k}=S_{n+1}^1$.}
\label{fig:f2}
\end{figure}

\pagebreak

\item $\mathcal{M} \in \mathfrak{M}^c$ if and only if $S_{j_1}^{\sigma_1}=S^1_2$. In this case, we have $|\mathfrak{M}^c|=\Sigma^c_{(n-1,n-1)}$ where by symmetry $\Sigma^c_{(n-1,n-1)}=\Sigma_{(n-1,n-1)}$ stands for the case where $S_{j_k}^{\sigma_k}=S_{n+1}^1$.
\end{enumerate}

\begin{figure}[h]
\centering
\def\svgwidth{0.8\textwidth}
\input{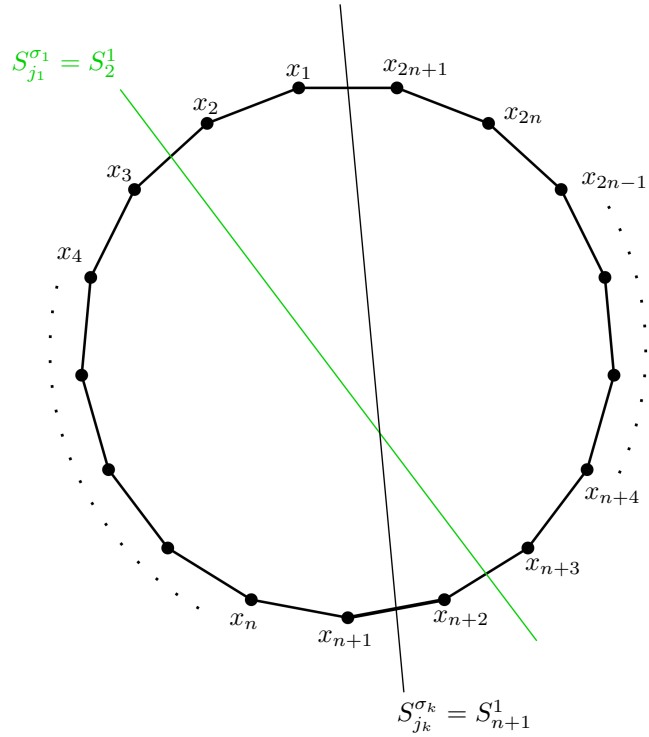}
\caption{$\Sigma^c_{(n-1,n-1)}$ corresponds to the number of maximal incompatible split systems $\{S_{j_1}^{\sigma_1},\dots,S_{j_k}^{\sigma_k}\}$ with $S_{j_1}^{\sigma_1}=S^1_2$ and $S_{j_k}^{\sigma_k}=S_{n+1}^{1}$.}
\label{fig:f3}
\end{figure}



\pagebreak

Summing up, we obtain the formula 
\[
|\mathfrak{M}| = \Theta_n = 3 \Sigma_{(n-1,n-1)}+2 \Sigma_{(n-1,n)}
\]
and it is easy to see that we furthermore have the following recurrence relations: $\Sigma_{(n-1,n-1)} = \Sigma_{(n-2,n-2)} + \Sigma_{(n-3,n-2)}$ and $ \Sigma_{(n-1,n)} = \Sigma_{(n-2,n-1)} + \Sigma_{(n-2,n-2)}$ and the initial conditions $\Sigma_{(0,0)}:=1$, $\Sigma_{(0,1)}:=0$ and $\Sigma_{(1,1)}:=1$. If one considers the three roots $\{\sigma_1,\sigma_2,\sigma_3\}$ of the equation $x^3-x-1=0$ given by
\begin{align*}
\sigma_1 &= \frac{1}{3} \left(\frac{27}{2} - \frac{3 \sqrt{69}}{2} \right)^{\frac{1}{3}} + \frac{\left(\frac{1}{2} \left(9 + \sqrt{69}\right) \right)^{\frac{1}{3}}}{3^{\frac{2}{3}}}, \\
\sigma_2 &= 
-\frac{1}{6} \left(1 + i \sqrt{3}\right) \left( \frac{27}{2} 
- \frac{3 \sqrt{69}}{2}\right)^{\frac{1}{3}}
- \frac{\left(1 - i \sqrt{3}\right)\left(\frac{1}{2} \left(9 + \sqrt{69}\right) \right)^{\frac{1}{3}}}{2 \cdot 3^{\frac{2}{3}}}, \\
\sigma_3 &= 
-\frac{1}{6} \left(1 - i \sqrt{3}\right) \left( \frac{27}{2} 
- \frac{3 \sqrt{69}}{2}\right)^{\frac{1}{3}}
- \frac{\left(1 + i \sqrt{3}\right)\left(\frac{1}{2} \left(9 + \sqrt{69}\right) \right)^{\frac{1}{3}}}{2 \cdot 3^{\frac{2}{3}}} 
\end{align*}
and the three roots $\{\tau_1,\tau_2,\tau_3\}$ of the equation $x^3-2 x^2+x-1=0$ given by
\begin{align*}
\tau_1 &= \frac{1}{3} \left( 2 + \left(\frac{25}{2} - \frac{3 \sqrt{69}}{2} \right)^{\frac{1}{3}} + \left(\frac{1}{2} \left(25 + 3 \sqrt{69}\right) \right)^{\frac{1}{3}} \right), \\
\tau_2 &= \frac{2}{3} 
- \frac{1}{6}\left(1 + i \sqrt{3}\right)\left(\frac{25}{2} - \frac{3 \sqrt{69}}{2} \right)^{\frac{1}{3}}
- \frac{1}{6}\left(1 - i \sqrt{3}\right)\left(\frac{25}{2} + \frac{3 \sqrt{69}}{2} \right)^{\frac{1}{3}}, \\
\tau_3 &= \frac{2}{3} 
- \frac{1}{6}\left(1 - i \sqrt{3}\right)\left(\frac{25}{2} - \frac{3 \sqrt{69}}{2} \right)^{\frac{1}{3}}
- \frac{1}{6}\left(1 + i \sqrt{3}\right)\left(\frac{25}{2} + \frac{3 \sqrt{69}}{2} \right)^{\frac{1}{3}},
\end{align*}
one can verify that
\[
\Theta_n = \sigma_1 \tau^n_1 + \sigma_3 \tau^n_2 + \sigma_2 \tau^n_3.
\]
Note that $\Theta_0$ is one. 
The first values of $\Theta_n$ for $n \ge 1$ are listed in the following table:
\begin{center}
    \begin{tabular}{ | p{1cm} | l | l | l | l | l | l | l | l | l | l |}
    \hline
    $n$  & 1 & 2 & 3 & 4 & 5 & 6 & 7 & 8 & 9 & 10 \\ \hline
    $C_{2n+1}$ &$C_3$ & $C_5$ & $C_7$ & $C_9$ & $C_{11}$ & $C_{13}$ & $C_{15}$ & $C_{17}$ & $C_{19}$ & $C_{21}$ \\ \hline
    $\Theta_n$ & 3 & 5 & 7 & 12 & 22& 39& 68& 119 & 209 & 367 \\ \hline
    \end{tabular}
\end{center}
The numbers $\Theta_n$ can also be proved to correspond to the coefficient of $z^n$ in the power series expansion in a neighborhood of the origin of the analytic function 
\[
z \mapsto \frac{3z-z^2}{1-2z+z^2-z^3}.
\]

Finally, it is not difficult to compute the number of $k$-dimensional cells of $\mathrm{E}(C_{2n+1})$ as a function of $k$ and $n$ as it is done in \cite{Su}. Indeed, note that for a $0$-cell $\psi \in \bar{T}(\mathcal{S},\alpha)$, it is easy to see that $\psi$ is in the maximal cell $[\mu]$ if and only if for any split $S=\{A,B\} \in \mathcal{S}$ where $|A|=n+1$ and $A \in \mathrm{supp}(\psi)$, one has $S \in \mathcal{S}(\mu)$. More generally, for every $k \in \{0,\dots, n\}$, there is a bijection between the $k$-dimensional cells of $\mathrm{E}(C_{2n+1})$ and the set of pairs $(\bar{\mathcal{S}},\bar{\mathcal{S}}')$ where $\bar{\mathcal{S}}$ is a split subsystem of $\mathcal{S}$ with $k$ elements and $\bar{\mathcal{S}} \subset \bar{\mathcal{S}}'$. The correspondence between $[\psi]$ and $(\bar{\mathcal{S}},\bar{\mathcal{S}}')$ is given by picking for every $S=\{A,B\} \in \mathcal{S}$ where $|A|=n+1$, the function $\psi$ in such a way that $\mathcal{S}(\psi):=\bar{\mathcal{S}}$, $\psi(B) = 0$ if $S \in \bar{\mathcal{S}}' \setminus \bar{\mathcal{S}}$ and $\psi(A) = 0$ if $S \in \mathcal{S} \setminus \bar{\mathcal{S}}'$.

\begin{figure}[h]
\centering
\def\svgwidth{0.8\textwidth}
\input{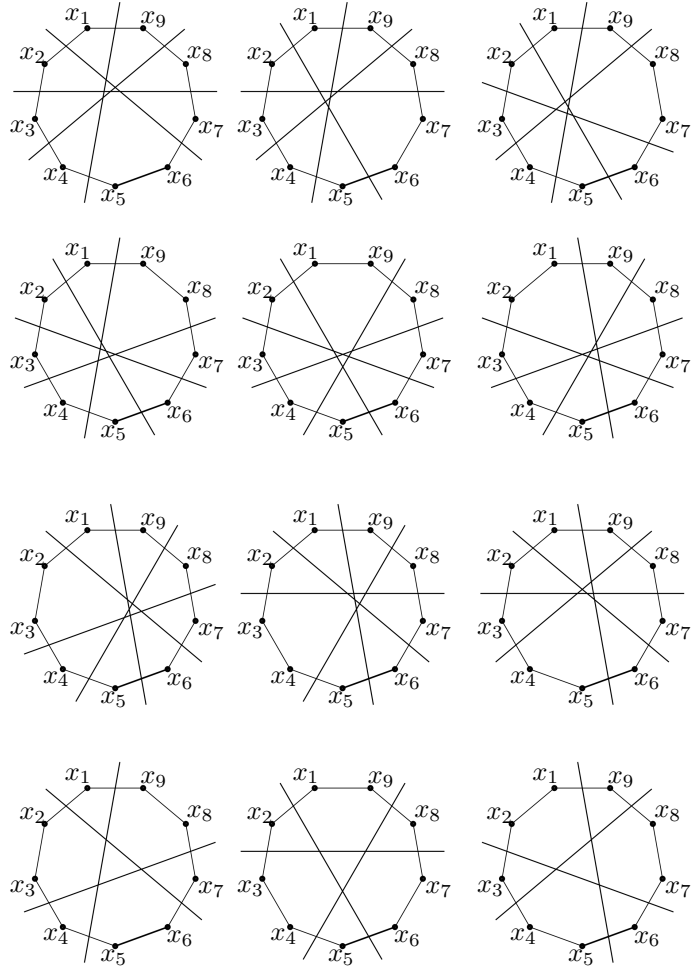}
\caption{List of all $12$ maximal incompatible split subsystems for $C_{2n+1}=C_9$. The first three lines correspond to the nine $4$-dimensional maximal cells of $\mathrm{E}(C_{2n+1})$ and the last line corresponds to the three $3$-dimensional maximal cells.}
\label{fig:f1}
\end{figure}
\end{Expl}

\pagebreak

\begin{Expl}\label{Expl:example2}
Let $(X,d)$ be an infinite connected bipartite $(4,4)$-graph endowed with the shortest-path metric. Let $S=\{A,B\}$ be an alternating split on $X$ (cf. \cite{BanC}). Assume by contradiction that $S$ has isolation index $\alpha_S^d \in \{0,1/2\}$. We show that $\alpha_S = 1$. Since all isolation indices of splits on $X$ are in $\frac{1}{2}\mathbb{Z}$, it follows that one can find four points $r,s,u,v \in X$ such that $r,s \in A$, $u,v \in B$ and $\alpha_{\{ \{r,s\},\{u,v\}\}}^d = \alpha_S^d$. We can now consider a finite subgraph $Y$ of $X$ such that
\begin{enumerate}[$(a)$]
\item $Y$ is a bipartite $(4,4)$-graph and
\item $I(r,s)\cup I(r,u) \cup I(r,v) \cup I(s,u) \cup I(s,v) \cup I(u,v) \subset Y$.
\end{enumerate}
In general, the restriction $d|_{Y \times Y}$ does not coincide with the shortest path metric $d_Y$ on $Y$. However, by $(b)$, it follows that for any $a,b \in \{r,s,u,v\}$, one has $d_Y(a,b) = d|_{Y \times Y}(a,b)$. Note that $S_Y := \{A \cap Y, B \cap Y\}$ is an alternating split of $Y$ (restrictions of alternating splits to $(4,4)$-subgraphs are easily seen to be alternating again) and 
\[
\alpha_{S_Y}^{d_Y} 
\le \alpha_{\{ \{r,s\},\{u,v\}\}}^{d_Y}
= \alpha_{\{ \{r,s\},\{u,v\}\}}^{d|_{Y \times Y}}
= \alpha_S^d 
\le 1/2.
\]
However, by $(a)$, $(Y,d_Y)$ is a finite bipartite $(4,4)$-graph and thus by \cite[Proposition~8.8]{BanC}, it follows that $\alpha_{S_Y}^{d_Y}=1$ which contradicts the above. If now $\mathcal{S}_A$ denotes the system of all alternating splits of $(X,d)$, note that for any $x,y \in X$ such that $d(x,y)=1$, there is a unique $S \in \mathcal{S}_A$ such that $S(x)\neq S(y)$. Note that $d_0 := d - \sum_{S \in \mathcal{S}_A} \delta_S$ is a pseudometric by Theorem~\ref{Thm:thm2}. Hence for any $a,b \in X$, consider a path $a=x_0,\ x_1, \ \dots,\ x_{m-1}, \ x_m=b$ in $(X,d)$, we have
\[
d_0(a,b) \le \sum_{i=0}^{m-1} d_0(x_i,x_{i+1})=0.
\]
It follows that $d_0$ is identically zero and thus $d$ is totally split-decomposable. Moreover, $(X,\bar{d})$ satisfies the (LRC). Indeed, the isometric cycles in $(X,\bar{d})$ are gated (cf. \cite[Theorem~8.7]{BanC}), it follows that $(X,\bar{d})$ has $1$-stable intervals and thus by the proof of \cite[Theorem~1.1]{Lan}, we obtain the desired result. Examples of such infinite bipartite $(4,4)$-graphs are given for $m \ge 4$, for $\sigma$ any element of the symmetric group $\mathfrak{S}_m$ and for $\{r_{\sigma(i)\sigma(i+1)}\}_{i \in \{1,\dots,m\}} \subset \mathbb{N} \cap [2,\infty) \cup \{\infty\}$ by the Cayley graph of Coxeter groups of the form
\begin{align*}
C = \left\langle s_1,\dots,s_m | \right.
&(s_{\sigma(1)} s_{\sigma(2)} )^{r_{\sigma(1)\sigma(2)}}=1,\dots, (s_{\sigma(m-1)} s_{\sigma(m)} )^{r_{\sigma(m-1)\sigma(m)}}=1,  \\
&(s_{\sigma(m)} s_{\sigma(1)} )^{r_{\sigma(m)\sigma(1)}}=1 \left.  \right\rangle\ .
\end{align*}
The restriction on the number of relations ensures that the Cayley graph is planar, $m \ge 4$ ensures that the degree is at least four and the condition $r_{\sigma(i)\sigma(i+1)}\ge 2$ for every $i$ ensures that each face contains at least four vertices. 
\end{Expl}
 

{\bf Acknowledgements.}
I am very grateful to my PhD advisor Prof. Dr. Urs~Lang for numerous stimulating conversations about the topic, and for reading this work as well as preliminary versions. This research was supported by the Swiss National Science Foundation.





\end{document}